\definecolor{lightblue}{rgb}{0,0.2,0.5}
\definecolor{dblackcolor}{rgb}{0.0,0.0,0.0}
\definecolor{dbluecolor}{rgb}{0.01,0.02,0.7}
\definecolor{dgreencolor}{rgb}{0.2,0.4,0.0}
\definecolor{dgraycolor}{rgb}{0.30,0.3,0.30}
\definecolor{ForestGreen}{RGB}{34,139,34}
\definecolor{mauve}{rgb}{0.7,0,0.43}
\definecolor{dkgreen}{rgb}{0,0.6,0}
\definecolor{darkgreen}{rgb}{0,0.6,0}
\definecolor{darkorange}{rgb}{1.0, 0.55, 0.0}
\definecolor{lightblue}{rgb}{0,0.2,0.5}
\definecolor{blue1}{rgb}{0,0.1,0.9}
\definecolor{lightblue}{rgb}{0,0.2,0.5}
\lstdefinelanguage{Maple}{
    morekeywords={proc, if, return, map, op, int, for, do, local, nops, convert, end},
    sensitive=false, %
    morecomment=[l]{//}, %
    morecomment=[s]{/*}{*/}, %
    morestring=[b]" %
} %
\tiny\color{gray}\noncopynumber,  %
\DeclareMathAlphabet{\eufrak}{U}{}{}{} 
\SetMathAlphabet\eufrak{normal}{U}{euf}{m}{n}
\SetMathAlphabet\eufrak{bold}{U}{euf}{b}{n}
\newcommand{\R}{\mathbb{R}}
\newcommand{\C}{\mathcal{C}}
\newcommand{\E}{\mathbb{E}}
\newcommand{\N}{\mathbb{N}}
\newcommand{\inte}{\mathbb{N}}
\newcommand*\rel@kern[1]{\kern#1\dimexpr\macc@kerna}
\newcommand*\widebar[1]{%
  \begingroup
  \def\mathaccent##1##2{%
    \rel@kern{0.8}%
    \overline{\rel@kern{-0.8}\macc@nucleus\rel@kern{0.2}}%
    \rel@kern{-0.2}%
  }%
  \macc@depth\@ne
  \let\math@bgroup\@empty \let\math@egroup\macc@set@skewchar
  \mathsurround\z@ \frozen@everymath{\mathgroup\macc@group\relax}%
  \macc@set@skewchar\relax
  \let\mathaccentV\macc@nested@a
  \macc@nested@a\relax111{#1}%
  \endgroup
}
\DeclareRobustCommand\widecheck[1]{{\mathpalette\@widecheck{#1}}}
\def\@widecheck#1#2{%
    \setbox\z@\hbox{\m@th$#1#2$}%
    \setbox\tw@\hbox{\m@th$#1%
       \widehat{%
          \vrule\@width\z@\@height\ht\z@
          \vrule\@height\z@\@width\wd\z@}$}%
    \dp\tw@-\ht\z@
    \@tempdima\ht\z@ \advance\@tempdima2\ht\tw@ \divide\@tempdima\thr@@
    \setbox\tw@\hbox{%
       \raise\@tempdima\hbox{\scalebox{1}[-1]{\lower\@tempdima\box
\tw@}}}%
    {\ooalign{\box\tw@ \cr \box\z@}}}
\newtheorem{prop}{Proposition}[section]
\newtheorem{assumption}[prop]{Assumption} %
\newtheorem{lemma}[prop]{Lemma}
\newtheorem{definition}[prop]{Definition}
\newtheorem{corollary}[prop]{Corollary}
\newtheorem{theorem}[prop]{Theorem}
\newtheorem{remark}[prop]{Remark}
\newtheorem{example}[prop]{Example}
\def\({\left(}
\def\){\right)}
\def\[{\left[}
\def\]{\right]}
\def\real{{\mathord{\mathbb R}}}
\def\N{{\mathord{\mathbb N}}}
\newcommand{\id}{\textbf{\rm Id}}
\def\P{\mathbb{P}}
\newcommand{\e}{\varepsilon}
\newcommand*\bigcdot{\mathpalette\bigcdot@{.5}}
\newcommand*\bigcdot@[2]{\mathbin{\vcenter{\hbox{\scalebox{#2}{$\m@th#1\bullet$}}}}} %
\newcolumntype{C}[1]{>{\centering\let\newline\\\arraybackslash\hspace{0pt}}m{#1}}
\newcommand{\F}{\mathcal F}
\newcommand{\ind}{\mathbf 1}
\newcommand{\pt}{\partial}
\newenvironment{Proof}{\removelastskip\par\medskip
\noindent{\em Proof.} \rm}{\penalty-20\null\hfill$\square$\par\medbreak}
\numberwithin{equation}{section}
\begin{document}
\title{
\huge
 Probabilistic representation of ODE solutions with quantitative estimates %
} 

\author{
  Qiao Huang\footnote{
 School of Mathematics,
  Southeast University,
  Nanjing 211189,
  P.R. China,
  and 
 School of Physical and Mathematical Sciences, 
 Nanyang Technological University, 
 21 Nanyang Link, Singapore 637371. 
 \href{mailto:qiao.huang@seu.edu.cn}{qiao.huang@seu.edu.cn}}
 \qquad
 Nicolas Privault\footnote{%
 School of Physical and Mathematical Sciences, 
 Nanyang Technological University, 
 21 Nanyang Link, Singapore 637371.
 \href{mailto:nprivault@ntu.edu.sg}{nprivault@ntu.edu.sg}
}
}

\maketitle

\vspace{-0.5cm}

\begin{abstract} 
 This paper considers the probabilistic representation of the solutions of
 ordinary differential equations (ODEs) by the generation of marked
 random trees in which marks can be interpreted 
 as mutant types in population genetics models.
 We present sufficient conditions 
 on equation coefficients that ensure the 
 integrability and uniform integrability of the 
 functionals of random trees used in this representation.
 Those conditions rely on the analysis 
 of a marked branching process that controls the growth of random trees 
 and provide implicit lower bounds on
 the explosion time of the underlying ODE,
 thus providing a connection between branching process explosion 
 and the existence and uniqueness of ODE solutions. 
\end{abstract}
\noindent\emph{Keywords}:~
Branching processes,
random trees,
ordinary differential equations,
Butcher series,
weighted progeny.

\noindent
{\em Mathematics Subject Classification (2020):}
65L06, %
34A25, %
34-04, %
05C05, %
65C05. %

\baselineskip0.7cm

\section{Introduction}
\noindent
 Consider the $d$-dimensional autonomous ODE problem %
 \begin{equation}
   \label{ODE}
  \left\{
  \begin{array}{ll} 
    x'(t) = f(x(t)), & t \in(0,T],
      \medskip
      \\
    x(0) = x_0\in\R^d, &
  \end{array}
  \right. 
\end{equation}
 where $f\in {\cal C}^\infty (\R^d  , \R^d)$ and $T>0$.
 Since $f$ is smooth, the Picard--Lindel\"of theorem guarantees
 local existence and uniqueness of the solution of the ODE~\eqref{ODE}. 

 \medskip

 It is well known that the solutions of ordinary differential
 equations (ODEs) such as \eqref{ODE} can be represented
 using Butcher trees,
 by combining rooted tree enumeration with Taylor expansions. 
 For this, if $x(t)$ is sufficiently smooth at $t=t_0$, 
 consider the Taylor expansion 
\begin{align} 
  \nonumber %
  x(t) & = x_0 + (t-t_0) f(x_0) + \frac{(t-t_0)^2}{2} \big(
  \nabla f(f) \big) (x_0)
  \\
\label{Taylor}
  & \quad + \frac{(t-t_0)^3}{6} \big( \nabla^2 f(f,f) + \nabla f(\nabla f (f)) \big) (x_0) + \cdots, 
\end{align}
 which uses the ``elementary differentials'' 
$$
f, \ \nabla f(f), \ \nabla^2 f(f,f), \ \nabla f(\nabla f (f)), \ldots. 
$$
see \eqref{aa1}-\eqref{aa2} below for their
 componentwise expressions. 
 From \cite{butcher1963,butcherbk},
 it is known that the expansion \eqref{Taylor} admits the formulation 
 \begin{equation}
   \label{fjkld9} 
 x(t) = \sum_{\tau} \frac{(t-t_0)^{|\tau|}}{\tau!\zeta (\tau)} F(\tau)(x_0),
 \end{equation}
 as a summation %
 over rooted trees $\tau$, where the functional 
 $F(\tau)$ represents the elementary differentials, 
 and $\zeta (\tau )$, $\tau !$ denote respectively the symmetry and
 factorial of the tree $\tau$. 
 The series \eqref{fjkld9} can be used to estimate
 ODE solutions by expanding $x(t)$ into a sum over trees
 up to a finite order,
 see e.g. \cite[Chapters~4-6]{deuflhard} %
 and references therein.

 \medskip

 In \cite{penent4}, stochastic branching
 processes have been used to express 
 ODE solutions as the expected value
 of a functional of random trees. 
 In \cite{huangprivault}, this approach has been
 interpreted as a Monte Carlo random
 generation of the Butcher trees $\tau$
 for the numerical estimation of the series
 \eqref{fjkld9}. 
 Monte Carlo estimators represent an alternative to the truncation
 of series, and they allow for estimates that improve
 via successive iterations.

 \medskip
  
 Those results also complement 
 other approaches to the use of stochastic processes
 to provide a diffusion interpretation for
 the solutions of partial differential equations
 via the Feynman--Kac formula \cite{Feyn2},
 and more generally in the fully nonlinear case 
 via stochastic branching mechanisms or stochastic cascades,
 see, e.g., 
 \cite{skorohodbranching},
 \cite{inw},
 \cite{hpmckean}, 
 \cite{sznitman},
 \cite{waymire},
 \cite{labordere}, 
 \cite{penent2022fully}.

 \medskip

 However, the above references generally 
 assume the uniform integrability of 
 random weights and/or the existence of a solution, see 
 \cite[Theorem~3.5]{labordere},
 \cite[Theorem~4.2]{penent2022fully}, 
 \cite[Theorem~4.2]{penent4}.
 This paper studies the stability of stochastic branching algorithms
 for the estimation of ODE solutions by probabilistic methods,
 without making such assumptions.

 \medskip
 
 On the one hand, in Theorem~\ref{tjklf-3-2}
 we derive integrability
 conditions on random weights that ensure the
 existence of the solution of
 the ODE~\eqref{ODE}
 together with the validity of
 its probabilistic representation.
 In Theorem~\ref{tjklf-3-1}, 
 we obtain related integrability conditions
 that ensure uniqueness of this solution.

 \medskip

 On the other hand, sufficient conditions for
  the integrability and uniform
  integrability of random weights
  over a time interval $[0,T]$ are provided
  in Theorems~\ref{integ-1}
  and \ref{unf-integ-1}
  under uniform bounds on the derivatives
  of equation coefficients, 
  provided that $T$ is sufficiently small.
  In Theorem~\ref{integ-3-2}
  and \ref{unf-integ-3-2}
  those conditions are then relaxed in order to
  allow for the growth of derivatives.
 Our results also include quantitative bounds of explosion times 
 that ensure the integrability of stochastic weights,
 the existence and uniqueness of solutions, and
 the validity of the stochastic representations of ODE solutions
 obtained in \cite{penent4}, \cite{huangprivault}. 

 \medskip
 
 Starting from the ODE~\eqref{ODE},
 our approach is to formulate the infinite ODE
 systems~\eqref{ode-system-a}-\eqref{ode-system-b}, 
 \eqref{ode-system-g}, whose solutions $\{x_g\}$
 are indexed by smooth functions $g$ in the set 
 $$
 {\cal C}: = \bigcup_{m\geq 1} {\cal C}^\infty(\R^d, \R^m).
 $$ 
 We then provide probabilistic representations for 
 $x_g (t)$, $g \in {\cal C}$,
 as well %
 as for 
 $g(x_\id (t))$, $g\in {\cal C}$,
 where $\id$ denotes the identity mapping on $\real^d$.
 This is achieved in Proposition~\ref{unf-intg}
 under uniform integrability conditions on functionals of
 random trees,
 which also shows the relation
$$
 x_g (t) = g(x_\id (t)), \qquad 
g\in
\bigcup_{m\geq 1} {\cal C}^\infty (\R^d, \R^m),
\quad t\geq 0. 
$$
 This representation is then used in Theorems~\ref{tjklf-3-2}
 and \ref{tjklf-3-1} to obtain the existence and uniqueness
 of ODE solutions represented as the expectation of a
 random tree functional.
 Those results rely on the integrability criteria established in 
 Theorems~\ref{integ-1},
 \ref{unf-integ-1}, \ref{integ-3-2} and \ref{unf-integ-3-2},
 based on the analysis 
 of the stability of the branching process
 that controls the growth of random trees. %
 This analysis is performed in 
 Proposition~\ref{radius-conv-weight-ct} 
 by controlling the integrability of
 marked random trees in which marks can be interpreted 
 as mutant types in population genetics models
 with mutation reversion (or back mutation).

\medskip

This paper is organized as follows. 
In Section~\ref{sect2-2} we introduce the basics of marked branching
processes and the related functionals of random trees
used for the
probabilistic representation \eqref{prob-rep-branching}. 
 In Section~\ref{sect3}
 we reformulate the ODE~\eqref{ODE} using
 ODE systems which are solved using probabilistic representations 
 under suitable integrability conditions of
 random weights.
 Those results are then applied in 
 Section~\ref{sect3-1} to
 the probabilistic representation
 of ODE solutions. 
In Section~\ref{sect5-1} %
 we derive
sufficient conditions for the integrability required 
in Section~\ref{sect3} via the analysis of related branching
processes.
In Sections~\ref{sect4} and \ref{sect4-1}
we provide sufficient 
conditions for the uniform integrability of
 random weights 
required in
Proposition~\ref{unf-intg}
 and Theorems~\ref{tjklf-3-2}-\ref{tjklf-3-1}. 
 Section~\ref{sect_ex} presents examples 
 of lifetime probability density function satisfying
 Assumption~\ref{asmpt-0}, together with examples of
 existence intervals. 
 In Appendix~\ref{btbt}, we present the complete construction of the
 rooted trees $\tau$ and their connection to
 the Butcher series \eqref{fjkld9}.    
      Numerical examples are available at 
 \url{https://github.com/nprivaul/mc-odes/blob/main/mc_odes.ipynb}.
   
\section{Marked branching trees} %
\label{sect2-2}
\noindent
 Our probabilistic representation of ODE solutions relies
 on a random branching tree $\mathcal{B}^{0,c}$ in which a label 
$$
 \mathbb K := \{\varnothing\}\cup \bigcup_{n\geq 1} \{1,2\}^n,
$$ 
 and a mark in the set of smooth functions 
\begin{equation*}
 {\cal C} := \bigcup_{m\geq 1} {\cal C}^\infty(\R^d, \R^m) 
\end{equation*}
 is attached to every branch,
 and the lifetimes of branches 
 are independent and identically distributed
 on a probability space $(\Omega , {\cal F}, \P)$ 
 according to a 
 common probability density function $\rho:\R^+\to\R$ 
 such that $\rho (t) > 0$ for all $t\in [0,T]$. 
\begin{definition}
 We let $\mathcal{B}^{0,c}$
 denote the random tree
 constructed as follows.
\begin{itemize}
\item %
  We start from a single branch with label $\varnothing$  
 and initial mark $c \in {\cal C}$.
 At the end of its lifetime $T_\varnothing := t_\varnothing$, 
 this branch yields either: 
 \begin{itemize}
\item
  a single offspring with label $(1)$ and mark $f$ if $c =\id$,
  \item
 two independent offsprings
 with respective labels $(1)$, $(2)$
 and mark $f$, $\nabla c$ if $c\ne\id$. 
\end{itemize}
 \item
  At generation $n \geq 1$, a branch having a parent label
  $\mathbf{k}^- := \left(k_1, \ldots, k_{n-1}\right)$ 
  starts at time $T_{{\mathbf k}^-}$
  and has the lifetime $t_{\mathbf k}$.
  At time $T_{\mathbf k} := T_{{\mathbf k}^-} + t_{\mathbf k}$, 
  it yields two independent %
  offsprings with the respective labels 
  $(\mathbf{k},1) = (k_1, \ldots , k_n,1)$, 
  $(\mathbf{k},2) = (k_1, \ldots , k_n,2)$
  and marks $f$, $\nabla c_{\mathbf k}$.
\end{itemize}
\end{definition}
\noindent 
\noindent
The set of labels of all branches living in the time interval $[0,t]$
is denoted by $\mathcal{K}_t$,
 its boundary at time $t$,
i.e. the set of labels of all branches living at time $t>0$, 
 is denoted by $\mathcal{K}^\pt_t$,
 and its interior in $[0,t)$,
   i.e. the set of branches that split not later than time $t$,
   is denoted by $\mathcal{K}^\circ_t$.
   Clearly, if the initial code is $c = \id$,
     then $|\mathcal K^\pt_t| = |\mathcal K^\circ_t|$; otherwise, $|\mathcal K^\pt_t| = |\mathcal K^\circ_t|+1$ and $B^{0,c}$ is a binary tree. 

   \medskip
    
 Figure~\ref{fjkld23-1} presents a sample 
 of $\mathcal{K}_t$ for the random tree $\mathcal{B}^{0,\id}$
 started from the mark $c=\id$,
 where $\id$ denotes the identity map on $\real^d$.
 
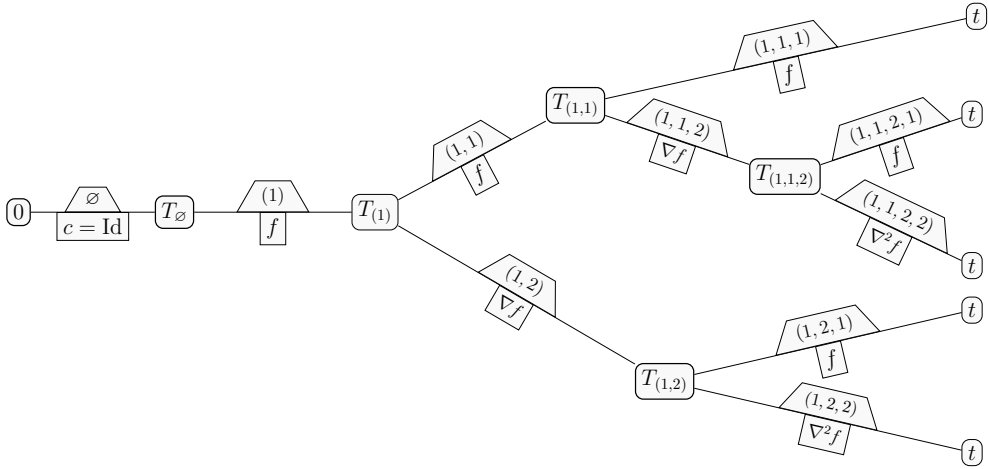
\begin{figure}[H]
\centering
\tikzstyle{level 1}=[level distance=4.5cm, sibling distance=4cm]
\tikzstyle{level 2}=[level distance=4.5cm, sibling distance=5cm]
\tikzstyle{level 2}=[level distance=4.5cm, sibling distance=4cm]
\begin{center}
  \hspace{-1.36cm}
  \resizebox{0.8\textwidth}{!}{
  \begin{tikzpicture}[scale=1.0,grow=right, sloped]
\node[rectangle,draw,black,text=black,thick, rounded corners=5pt,fill=gray!05]{\large $0$} 
child {
  node[%
     draw,black,text=black,thick, rounded corners=5pt,xshift=-1cm,fill=gray!05] {\large $T_\varnothing$} %
            child {
  node[name=data, %
         draw, rounded corners=5pt,fill=gray!05] (main) 
        {\large $T_{(1)}$} %
                child{
                  node[%
                     draw,black,text=black,thick,yshift=-1.8cm,xshift=2cm,rounded corners=5pt,fill=gray!05]{\large $T_{(1,2)}$} %
                    child{
    node[name=data, %
         draw, rounded corners=5pt,yshift=0.4cm,fill=gray!05,xshift=2.4cm]  
        {\large $t$} %
                    edge from parent
                    node[above,trapezium,draw,fill=gray!05]{\small $(1,2,2)$}
                    node[rectangle,draw,below,fill=gray!05]{$\nabla^2 f$}%
                    }
                    child{
    node[name=data, %
         draw, rounded corners=5pt,yshift=-0.4cm,xshift=2.4cm,fill=gray!05]  
        {\large $t$ \nodepart{two} $f$}
                    edge from parent
                    node[above,trapezium,draw,fill=gray!05]{$(1,2,1)$}
                    node[rectangle,draw,below,fill=gray!05]{\large $f$} %
                    }
                edge from parent
                node[above,trapezium,draw,fill=gray!05]{$(1,2)$}
                node[rectangle,draw,below,fill=gray!05]{$\nabla f$}%
                }
                child{
                  node[%
                     draw,black,text=black,thick,yshift=0.4cm,rounded corners=5pt,fill=gray!05]{\large $T_{(1,1)}$} %
                    child{
                      node[%
                        draw,black,text=black,thick,yshift=0.4cm,xshift=0.2cm,rounded corners=5pt,fill=gray!05]{\large $T_{(1,1,2)}$} %
                    child{
    node[name=data, %
         draw, rounded corners=5pt,yshift=0cm,fill=gray!05,xshift=-0.3cm]  
        {\large $t$} %
                    edge from parent
                    node[above,trapezium,draw,fill=gray!05]{$(1,1,2,2)$}
                    node[rectangle,draw,below,fill=gray!05]{$\nabla^2 f$}%
                    }
                    child{
    node[name=data, %
         draw, rounded corners=5pt,yshift=-0.6cm,fill=gray!05,xshift=-0.3cm]  
        {\large $t$} %
                    edge from parent
                    node[above,trapezium,draw,fill=gray!05]{$(1,1,2,1)$}
                    node[rectangle,draw,below,fill=gray!05]{\large $f$} %
                    }
                    edge from parent
                    node[above,trapezium,draw,fill=gray!05]{$(1,1,2)$}
                    node[rectangle,draw,below,fill=gray!05]{$\nabla f$} %
                    }
                    child{
    node[name=data, %
         draw, rounded corners=5pt,fill=gray!05,xshift=4.5cm]  
        {\large $t$ \nodepart{two} $f$}
                    edge from parent
                    node[above,trapezium,draw,fill=gray!05]{$(1,1,1)$}
                    node[rectangle,draw,below,fill=gray!05]{\large $f$} %
                    }
                edge from parent
                node[above,trapezium,draw,fill=gray!05]{$(1,1)$}
                node[rectangle,draw,below,fill=gray!05]{\large $f$} %
                }
                edge from parent
                node[above,trapezium,draw,fill=gray!05] {$(1)$}
                node[rectangle,draw,below,fill=gray!05] {\large $f$} %
            }
                edge from parent
                node[above,trapezium,draw,fill=gray!05] {$\varnothing$}
                node[rectangle,draw,fill=gray!05,below] {\large $c={\rm Id}$} %
};
\end{tikzpicture}
}
\end{center}
\vskip-0.3cm
\caption{Sample of
  $\mathcal{K}_t$ for the random tree
  ${\mathcal{B}}^{0,c}$ started from $c=\id$.}
\label{fjkld23-1}
\end{figure}

\vspace{-.3cm}

\noindent
 Figure \ref{fjkld23-2} presents a sample 
 of $\mathcal{K}_t$ for the random tree
 $\mathcal{B}^{0,c}$ started from a mark $c\not= {\rm Id}$.
 
\begin{figure}[H]
\centering
\tikzstyle{level 1}=[level distance=4.5cm, sibling distance=4cm]
\tikzstyle{level 2}=[level distance=4.5cm, sibling distance=5cm]
\tikzstyle{level 2}=[level distance=4.5cm, sibling distance=4cm]
\begin{center}
  \hspace{-1.44cm}
  \resizebox{0.75\textwidth}{!}{
\begin{tikzpicture}[scale=1.0,grow=right, sloped]
        \node[draw,black,text=black,thick, rounded corners=5pt,fill=gray!05,] {\large $0$} 
            child {node[name=data, fill=gray!05,%
         draw, rounded corners=5pt] (main) 
        {\large $T_\varnothing$} %
                                child{
                  node[%
                    draw,fill=gray!05,text=black,thick,yshift=0.4cm,rounded corners=5pt,xshift=1cm]{\large $T_{(1)}$ \nodepart{two} $\nabla c$} 
                    child{
    node[name=data, %
         draw, fill=gray!05,rounded corners=5pt,yshift=0.4cm,xshift=4cm]  
        {\large $t$ \nodepart{two} $\nabla^2 c$}
                    edge from parent
                    node[above,trapezium,draw,fill=gray!05]{$(1,2)$}
                    node[rectangle,draw,fill=gray!05,below]{$\nabla^2 c$}%
                    }
                    child{
    node[name=data, %
         draw, fill=gray!05, rounded corners=5pt,yshift=-0.3cm,xshift=4cm]  
        {\large $t$ \nodepart{two} $f$}
                    edge from parent
                    node[above,trapezium,draw,fill=gray!05]{$(1,1)$}
                    node[rectangle,draw,fill=gray!05,below]{\large $f$} %
                    }
                edge from parent
                node[above,trapezium,draw,fill=gray!05]{$(1)$}
                node[rectangle,draw,fill=gray!05,below]{$\nabla c$}%
                }
                                child{
                  node[%
                     draw,black,text=black,thick,yshift=2.8cm,fill=gray!05,rounded corners=5pt]{\large $T_{(2)}$ \nodepart{two} $f$} %
                    child{
                      node[%
                         draw,black,text=black,thick,yshift=0.4cm,fill=gray!05,rounded corners=5pt]{\large $T_{(2,2)}$ \nodepart{two} $\nabla f$} 
                    child{
    node[name=data, %
         draw,fill=gray!05, rounded corners=5pt,yshift=0cm,xshift=0.5cm]  
        {\large $t$ \nodepart{two} $\nabla^2 f$}
                    edge from parent
                    node[above,trapezium,draw,fill=gray!05]{$(2,2,2)$}
                    node[rectangle,draw,fill=gray!05,below]{$\nabla^2 f$}%
                    }
                    child{
    node[name=data, %
         draw,fill=gray!05, rounded corners=5pt,yshift=-0.6cm,xshift=0.5cm]  
        {\large $t$ \nodepart{two} $f$}
                    edge from parent
                    node[above,trapezium,draw,fill=gray!05]{$(2,2,1)$}
                    node[rectangle,draw,fill=gray!05,below]{\large $f$} %
                    }
                    edge from parent
                    node[above,trapezium,draw,fill=gray!05]{$(2,2)$}
                    node[rectangle,draw,fill=gray!05,below]{$\nabla f$} %
                    }
                    child{
    node[name=data, %
         draw, fill=gray!05, rounded corners=5pt,xshift=5cm,yshift=-0.5cm]  
        {\large $t$ \nodepart{two} $f$}
                    edge from parent
                    node[above,trapezium,draw,fill=gray!05]{$(2,1)$}
                    node[rectangle,draw,fill=gray!05,below]{\large $f$} %
                    }
                edge from parent
                node[above,trapezium,draw,fill=gray!05]{$(2)$}
                node[rectangle,draw,fill=gray!05,below]{\large $f$} %
                }
                edge from parent
                node[above,trapezium,draw,fill=gray!05] {$\varnothing$}
                node[rectangle,draw,fill=gray!05,below]  {\large $c\not={\rm Id}$} %
}; %
\end{tikzpicture}
}
\end{center}
\vskip-0.3cm
\caption{Sample of
   $\mathcal{K}_t$ for the random tree
  ${\mathcal{B}}^{0,c}$ started from $c\not=\id$.}
\label{fjkld23-2}
\end{figure}

\vskip-0.3cm

\noindent
 For $n\geq 1$, we also denote by $\mathcal{K}^n_t$ the label set of
 the branches generated at the $n$-$th$ generation, and let
   $$
   \mathcal{K}^{\circ,n}_t := \mathcal{K}^\circ_t \cap \mathcal{K}^n_t,
   \quad
   \mathcal{K}^{\pt,n}_t := \mathcal{K}^\pt_t \cap \mathcal{K}^n_t.
$$
  \noindent 
   When the initial mark is $c = g
   \in {\cal C} \setminus \{ \id \}$, we observe that the marks
   of all branches are of the form either $\nabla^m f$ or $\nabla^m g$, $m\geq 0$.
 Letting  
 $$
 {\cal C}_g := \{ \nabla^k g \}_{k\geq 0},
 \quad
 g\in {\cal C},
 $$ 
 we also consider the classes of $n$-$th$ generation branches 
\begin{equation*}
  \mathcal{K}^n_t(f) := \left\{ \mathbf k \in \mathcal{K}^n_t \ : \ c_{\mathbf k} \in {\cal C}_f \right\} \quad \mbox{and} \quad \mathcal{K}^n_t(g) := \left\{ \mathbf k \in \mathcal{K}^n_t \ : \ c_{\mathbf k} \in {\cal C}_g \right\}. 
\end{equation*}
 Finally, we denote by 
$$
 \widebar{F}_\rho(t) := \P\left( T_\varnothing>t\right) = \int_t^\infty \rho (r) dr 
 , \qquad t \geq 0, 
$$
 the tail cumulative distribution function of $\rho$,
 and we consider the following functional
 of random trees. 
\begin{definition}
   We let $\mathcal{H}_t \left(\mathcal{B}^{0,c}\right)$ 
denote the functional of $\mathcal{B}^{0,c}$ defined as    
      \begin{equation}
        \label{functional}
        \mathcal{H}_t \left(\mathcal{B}^{0,c}\right):=
        \prod_{\mathbf k \in \mathcal{K}^{\partial}_t} \frac{c_{\mathbf k}(x_0)}{\widebar{F}_\rho (t-T_{{\mathbf k}^-})} \prod_{\mathbf k \in \mathcal{K}^\circ_t} \frac{1}{\rho(t_\mathbf k)},
   \quad   t\in [0,T], \quad c \in {\cal C}_f \cup {\cal C}_g. 
\end{equation}
\end{definition}
\noindent
 In \eqref{functional}, the terms in the right product make sense 
 due to the assumption that $\rho(t) > 0$ for all $t\in [0,T]$.
 In addition, the product of $c_{\mathbf k}(x_0)$ over 
 $k \in \mathcal{K}^{\partial}_t$
 is interpreted as a composition
 according to tree leaves from bottom to top 
 of the elementary differentials
 $f$, $\nabla f(f)$, $\nabla^2 f(f,f)$, $\nabla f(\nabla f (f))$
 defined as 
\begin{align}
\label{aa1}
  & 
( \nabla f )(f) (x_0) := \left(  \sum_{i_2=1}^d \frac{\partial f_{i_1}}{\partial x_{i_2}} (x_0) f_{i_2} (x_0) \right)_{i_1=1,\ldots , d},
\\
\nonumber
\\
\nonumber %
  & \nabla^2 f ( f,f ) (x_0)  
:= \left( \sum_{i_2,i_3=1}^d \frac{\partial^2 f_{i_1}}{\partial x_{i_2}\partial x_{i_3}} (x_0) 
f_{i_2} (x_0) f_{i_3} (x_0) \right)_{i_1=1,\ldots , d},
\\
\nonumber
\\
\label{aa2}
  & \nabla f ( \nabla f (f )) (x_0)  
:= \left( \sum_{i_2,i_3=1}^d \frac{\partial f_{i_1}}{\partial x_{i_2}} (x_0) 
 \frac{\partial f_{i_2}}{\partial x_{i_3}} (x_0) f_{i_3} (x_0) 
 \right)_{i_1=1,\ldots , d},
\end{align} 
etc. 
 For example, in the sample tree of Figure~\ref{fjkld23-1}
 we have 
 $$
 \mathcal{K}^\circ_t = \{(1,1,1),(1,1,2,1),(1,1,2,2),(1,2,1),(1,2,2)\}$$
 and
 $$
 \prod_{\mathbf k \in \mathcal{K}^\partial_t}
 c_{\mathbf k}(x_0)
 =
 \nabla^2 f (f ,\nabla^2 f (f,f)) (x_0) ,
 $$
 while in the sample tree of Figure~\ref{fjkld23-2}
 we have
 $$
 \mathcal{K}^\circ_t = \{(2,2),(2,1),(1,2,2),(1,2,1),(1,1)\}
 $$
  and 
$$
 \prod_{\mathbf k \in \mathcal{K}^\partial_t} c_{\mathbf k}(x_0) 
 =
 ( \nabla^2 c) (f , (\nabla^2 f) (f,f)) (x_0),
$$
 see Appendix~\ref{btbt} for details.  
\section{ODE systems} 
\label{sect3}

\noindent 
 In this section, we reformulate the ODE~\eqref{ODE}
 as an infinite system of differential equations
 indexed by the set ${\cal C}$ of marks.
\begin{lemma}\label{relation-ODE-ODEs}
  Let $T>0$ such that the ODE~\eqref{ODE} admits
  a solution $x\in {\cal C}^1([0,T],\R^d)$.
  Then, the family
  $(x_c)_{c\in \{\id\}\cup \left\{ \nabla^k f \right\}_{k\geq 0}}
  : =
  (c(x))_{c\in \{\id\}\cup \left\{ \nabla^k f \right\}_{k\geq 0}}$, %
 solves the ODE system 
   \begin{subequations}
        	\begin{empheq}[left=\empheqlbrace]{align}
        	\label{ode-system-a}
                &
                 (x_c )' = x_f, \quad c = {\rm Id},  
 \medskip
    \\
        	\label{ode-system-b}
                &
              ( x_c )' = 
                x_f x_{\nabla c}, \quad c = \nabla^m f, \ m \geq 0, 
\end{empheq}
\end{subequations}
   with the initial conditions $x_c(0)=c(x_0)$,
   for $c \in \{\id\}\cup \left\{ \nabla^k f \right\}_{k\geq 0}$. 
\end{lemma}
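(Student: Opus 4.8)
The plan is to prove both identities by directly differentiating the composition $x_c(t) = c(x(t))$ through the chain rule, using only that $x$ solves \eqref{ODE} on $[0,T]$ and that every mark $c$ is smooth. First I would record the regularity needed: since $f\in{\cal C}^\infty(\R^d,\R^d)$, every $c\in\{\id\}\cup\{\nabla^k f\}_{k\geq 0}$ is of class ${\cal C}^\infty$, and composing with the given solution $x\in{\cal C}^1([0,T],\R^d)$ shows that each $t\mapsto x_c(t)=c(x(t))$ is itself ${\cal C}^1$ on $[0,T]$. Hence all the derivatives below are well defined, and the initial conditions are immediate: $x_c(0)=c(x(0))=c(x_0)$ for every $c$ in the index set.

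For the case $c=\id$ I would simply observe that $x_\id(t)=x(t)$, so that $(x_\id)'(t)=x'(t)=f(x(t))=x_f(t)$ by \eqref{ODE}, which is exactly \eqref{ode-system-a}. For the case $c=\nabla^m f$, $m\geq 0$, I would apply the chain rule componentwise. Writing $(c_i)_i$ for the (multi-indexed) components of $\nabla^m f$, differentiation of $t\mapsto c_i(x(t))$ gives
$$\frac{d}{dt}\,c_i(x(t)) = \sum_{j=1}^d \frac{\partial c_i}{\partial x_j}(x(t))\,x_j'(t) = \sum_{j=1}^d \frac{\partial c_i}{\partial x_j}(x(t))\,f_j(x(t)),$$
where the second equality uses $x'(t)=f(x(t))$. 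The freshly introduced differentiation index turns $c=\nabla^m f$ into $\nabla c=\nabla^{m+1} f$, which again lies in $\{\nabla^k f\}_{k\geq 0}$; thus the right-hand side is precisely the contraction of $x_{\nabla c}(t)=(\nabla c)(x(t))$ against $x_f(t)=f(x(t))$ over that index, i.e.\ the product $x_f\,x_{\nabla c}$ appearing in \eqref{ode-system-b}. This simultaneously shows that the system is closed under the family $(x_c)_c$, since every right-hand side only involves members $x_f$ and $x_{\nabla c}$ of the same indexing set.

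The computation itself is a one-line application of the chain rule, so the only point genuinely requiring care is bookkeeping of the index conventions: one must verify that the loosely written product $x_f\,x_{\nabla c}$ is understood as the contraction dictated by the chain rule, with $f$ contracted against the newest (freshest) differentiation slot of $\nabla c$, so that it agrees with the composition convention of the elementary differentials in \eqref{aa1}--\eqref{aa2} that defines the tree functional \eqref{functional}. Once this convention is fixed—checking it explicitly on the base case $c=f$ (where the formula reads $(x_f)'=x_f\,x_{\nabla f}=(\nabla f)(f)(x)$, matching \eqref{aa1})—the general identity \eqref{ode-system-b} follows for all $m\geq 0$, and the lemma is complete.
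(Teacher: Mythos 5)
Your proposal is correct and follows essentially the same route as the paper: the paper's proof is likewise a one-line application of the chain rule, $g(x(t))' = f(x(t))\,\nabla g(x(t))$, applied to each mark in the indexing family (with $c=\id$ handled by \eqref{ODE} directly). Your additional remarks on regularity and on reading $x_f\,x_{\nabla c}$ as the contraction over the fresh differentiation index make explicit conventions the paper leaves implicit, but they do not change the argument.
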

\begin{Proof}
 It follows from the time differentiation 
 \begin{equation*}
  g(x(t))' = x(t) \nabla g (x(t)) 
  = f(x(t)) \nabla g (x(t)) , 
 \quad t\in [0,T], 
\end{equation*}
 that for every test function $g \in {\cal C}$,
 the family 
  $( g(x) )_{g\in{\cal C}}$
  satisfies the system of equations 
  \begin{equation}
\nonumber %
    (g(x))' = \left\{
    \begin{array}{ll}
    f(x), & g = \id, 
    \smallskip
    \\ 
    f(x) \nabla g(x), &  g \in {\cal C} \setminus \{ \id \}, 
    \end{array}
    \right. 
    \end{equation}
  with initial condition $(x_g(x)(0))_{g\in {\cal C}} = (g(x_0))_{g\in {\cal C}}$. 
\end{Proof}
\noindent
\begin{prop}
    \label{Picard}
 Assume that %
  \begin{enumerate}[\hypertarget{asmp-initial-lp}{\bf (C1)}]
    \item 
 $(\nabla^k f(x_0))_{k\geq 0} \in \ell^p(\inte )$ for some $p\in [1,\infty ]$.
\end{enumerate}
 Then, the subsystem~\eqref{ode-system-b} 
 admits
 a unique local solution %
 in $\ell^p(\inte )$. 
\end{prop}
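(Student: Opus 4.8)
The plan is to recast the subsystem~\eqref{ode-system-b} as a single \emph{autonomous} ODE $u'=F(u)$ in the Banach space $\ell^p(\inte)$ and to conclude by the Cauchy--Lipschitz (Picard--Lindel\"of) theorem in Banach spaces. Writing $u=(u_m)_{m\geq 0}$ with $u_m:=x_{\nabla^m f}$, each coordinate $u_m$ is a tensor of order $m$, equipped with its operator (multilinear) norm $|u_m|$, and $\ell^p(\inte)$ carries the norm $\|u\|_p=\big(\sum_{m\geq 0}|u_m|^p\big)^{1/p}$ for $p<\infty$, respectively $\|u\|_\infty=\sup_{m\geq 0}|u_m|$. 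Since $\nabla(\nabla^m f)=\nabla^{m+1}f$, the right-hand side of~\eqref{ode-system-b} reads $x_f\,x_{\nabla(\nabla^m f)}=u_0\cdot u_{m+1}$, where $\cdot$ is the contraction of the distinguished slot of the order-$(m+1)$ tensor $u_{m+1}$ against the vector $u_0$; this produces an order-$m$ tensor, so $F(u):=(u_0\cdot u_{m+1})_{m\geq 0}$ has the correct type and involves only the unknowns $x_{\nabla^m f}$, making the subsystem closed. The initial datum $u(0)=(\nabla^m f(x_0))_{m\geq 0}$ lies in $\ell^p(\inte)$ precisely by Assumption~\hyperlink{asmp-initial-lp}{(C1)}.

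First I would check that $F$ maps $\ell^p(\inte)$ into itself with a quadratic bound. The contraction is submultiplicative for the operator norm, $|u_0\cdot u_{m+1}|\leq |u_0|\,|u_{m+1}|$, whence for $p<\infty$
\begin{equation*}
\|F(u)\|_p^p=\sum_{m\geq 0}|u_0\cdot u_{m+1}|^p\leq |u_0|^p\sum_{k\geq 1}|u_k|^p\leq |u_0|^p\,\|u\|_p^p,
\end{equation*}
and since $|u_0|\leq\|u\|_p$ this gives $\|F(u)\|_p\leq\|u\|_p^2$; the case $p=\infty$ is identical with sums replaced by suprema. Thus $F$ is well defined on all of $\ell^p(\inte)$.

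Next I would establish that $F$ is locally Lipschitz. For $u,v\in\ell^p(\inte)$ the identity $u_0 u_{m+1}-v_0 v_{m+1}=u_0(u_{m+1}-v_{m+1})+(u_0-v_0)v_{m+1}$, combined with submultiplicativity and Minkowski's inequality, yields
\begin{equation*}
\|F(u)-F(v)\|_p\leq |u_0|\,\|u-v\|_p+|u_0-v_0|\,\|v\|_p\leq\big(\|u\|_p+\|v\|_p\big)\|u-v\|_p,
\end{equation*}
using $|u_0|\leq\|u\|_p$ and $|u_0-v_0|\leq\|u-v\|_p$. Hence $F$ is Lipschitz with constant $2R$ on every ball $\{\|u\|_p\leq R\}$, i.e. locally Lipschitz (in fact a bounded homogeneous quadratic map, hence smooth). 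The Cauchy--Lipschitz theorem in $\ell^p(\inte)$ then provides, for some $T_*>0$, a unique maximal solution $u\in {\cal C}^1([0,T_*),\ell^p(\inte))$ of $u'=F(u)$ with $u(0)=(\nabla^m f(x_0))_{m\geq 0}$, which is the asserted unique local $\ell^p$-solution of~\eqref{ode-system-b}.

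The only delicate point is the bookkeeping behind the submultiplicative estimate $|u_0\cdot u_{m+1}|\leq |u_0|\,|u_{m+1}|$: one must fix the convention identifying $u_m$ with an order-$m$ multilinear map and verify that the product in~\eqref{ode-system-b} is exactly the contraction against the differentiation slot, so that the operator norm controls it with constant $1$. With any other compatible cross-norm the same argument goes through at the cost of an inessential universal constant, which does not affect local existence and uniqueness; note also that the quadratic (rather than merely linear) growth of $F$ is what confines the solution to a finite time interval and links it to the explosion-time estimates discussed in the introduction.
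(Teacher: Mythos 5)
Your proposal is correct and follows essentially the same route as the paper: both rewrite the subsystem~\eqref{ode-system-b} as the autonomous quadratic ODE $\mathbf{x}' = x^{(0)}\cdot S(\mathbf{x})$ in $\ell^p(\inte)$ (your $F(u)=(u_0\cdot u_{m+1})_{m\geq 0}$ is exactly the paper's shift formulation) and conclude by the infinite-dimensional Picard--Lindel\"of theorem. The only cosmetic difference is that you verify the local Lipschitz bound $\|F(u)-F(v)\|_p\leq(\|u\|_p+\|v\|_p)\|u-v\|_p$ directly, whereas the paper checks continuity of $F$ together with its Fr\'echet derivatives and cites \cite[Theorem 25]{LS94}; your version is marginally more self-contained, and your explicit attention to the tensor-contraction bookkeeping behind $|u_0\cdot u_{m+1}|\leq|u_0|\,|u_{m+1}|$ is a welcome precision the paper leaves implicit.
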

\begin{Proof}
 Using the left shift operator
 that acts on real sequences as
 $$
  S(x^{(0)}, x^{(1)}, \ldots) = (x^{(1)}, x^{(2)}, \ldots), 
$$
 we rewrite
  the subsystem~\eqref{ode-system-b}, 
  as the infinite-dimensional ODE 
  $$
  {\mathbf x}' = x^{(0)} \cdot S(\mathbf x).
  $$ 
 Since the nonlinearity $F(\mathbf x) := x^{(0)} \cdot S(\mathbf x)$ 
 is continuous from $\ell^p(\inte )$ into itself %
 together with its (non-vanishing) Fr\'echet derivatives %
  \begin{equation*}
    DF(\mathbf x)(\mathbf u) = u^{(0)} \cdot S(\mathbf x) + x^{(0)} \cdot S(\mathbf u)
    \quad
    \mbox{and}
  \quad 
    D^2F(\mathbf x)(\mathbf u, \mathbf v) = u^{(0)} \cdot S(\mathbf v) + v^{(0)} \cdot S(\mathbf u),
  \end{equation*}
  local existence 
  follows from the infinite-dimensional version of the Picard--Lindel\"of theorem, 
  cf. e.g. \cite[Theorem 25]{LS94},
  and shows the well-posedness of \eqref{ode-system-b}. 
\end{Proof} 
\noindent
The well-posedness of the ODE system~\eqref{ode-system-a}-\eqref{ode-system-b} 
 now follows from that of \eqref{ode-system-b}. 
  We also note that the local well-posedness of \eqref{ode-system-a}-\eqref{ode-system-b} only relies on the regularity of $f$ at the point $x_0$, %
  whereas that of the ODE~\eqref{ODE} relies on the local regularity of $f$ near $x_0$.
\noindent
 In what follows, given $g \in {\cal C} \setminus \{ \id \}$ we 
 augment the subsystem~\eqref{ode-system-b}
 as %
\begin{equation}
 \label{ode-system-g}
 \left\{
  \begin{array}{l} 
    ( x_c )' = x_f x_{\nabla c},
    \quad c = \nabla^m g, 
    \medskip
    \\
    (x_c)' = x_f x_{\nabla c},
    \quad c = \nabla^m f, 
    \quad     m \geq 0,
\end{array}
  \right. 
\end{equation}
 with the initial conditions 
\begin{equation}
\nonumber %
  \left\{
  \begin{array}{l} 
 x_c (0) = \nabla^m g(x_0), \quad c = \nabla^m g, 
    \medskip
    \\
 x_c(0) = \nabla^m f(x_0), \quad c = \nabla^m f,  
 \quad m\geq 0.
  \end{array}
  \right. 
\end{equation}
\begin{lemma}
\label{lemma-uc}
 Assume that
 $$
 \E \big[ \big| \mathcal{H}_t \big(\mathcal{B}^{0,c}\big)\big| \big] < \infty,
      \quad \mbox{for all } t\in [0,T] \mbox{ and } c \in {\cal C}. %
      $$
 Then, the functions defined by 
\begin{equation}\label{uc}
    x_c (t) := \E \big[ \mathcal{H}_t\big(\mathcal{B}^{0,c}\big) \big],
  \quad t\in [0,T], \quad c \in {\cal C}, %
  \end{equation}
 satisfy the recursion 
\begin{equation*}%
  x_c (t) = c (x_0) + \int_0^t \left( x_f(s) \ind_{\{c=\id\}} + x_{\nabla c}(s) x_f(s) \ind_{\{c\ne\id\}} \right) ds,
  \quad t\in[0,T],
    \
    c \in {\cal C}. %
\end{equation*}
In particular,
\begin{enumerate}[i)]
  \item 
      The family $(x_c)_{c\in {\cal C}_f\cup{\cal C}_g}$
      solves the ODE system~\eqref{ode-system-g}, i.e.
$$
 \left\{
  \begin{array}{l} 
    (x_{\nabla^m g})' = x_f x_{\nabla^{m+1} g}, 
    \medskip
    \\
    (x_{\nabla^m f})' = x_f x_{\nabla^{m+1} f},
    \quad     m \geq 0. 
\end{array}
  \right. 
$$ 
          \item 
            If $x_f = f(x_{\id})$, then $x_{\id}$ is a solution of
            the ODE~\eqref{ODE}.
\end{enumerate}
\end{lemma}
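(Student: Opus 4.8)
The plan is to condition on the lifetime $T_\varnothing = t_\varnothing$ of the root branch and to exploit the renewal structure of $\mathcal{B}^{0,c}$, whereby each subtree issued at the first branching is an independent copy of a tree started afresh from the corresponding offspring mark, only shifted in time. First I would split according to whether the root survives past time $t$. On $\{T_\varnothing > t\}$ the root is the unique branch alive at time $t$, so $\mathcal{K}^\partial_t = \{\varnothing\}$ and $\mathcal{K}^\circ_t$ is empty; since the root starts at time $0$, the boundary factor gives $\mathcal{H}_t(\mathcal{B}^{0,c}) = c(x_0)/\widebar{F}_\rho(t)$, and taking expectations with $\P(T_\varnothing>t) = \widebar{F}_\rho(t)$ yields the contribution $c(x_0)$, which accounts for the constant term of the recursion for both $c=\id$ and $c\neq\id$.

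On $\{T_\varnothing = s \le t\}$ the root splits, and I would isolate the factor $1/\rho(t_\varnothing) = 1/\rho(s)$ coming from the root belonging to $\mathcal{K}^\circ_t$, then decompose the remaining product over the offspring subtrees. Shift-invariance of lifetimes together with the identity $t - T_{\mathbf k^-} = (t-s) - (T_{\mathbf k^-} - s)$ for the boundary factors shows that each offspring subtree, reindexed to start at $0$, contributes exactly its functional at the reduced horizon $t-s$. When $c=\id$ the root has a single offspring carrying mark $f$, and the identity mark passes the subtree value through unchanged, giving $\mathcal{H}_t(\mathcal{B}^{0,\id}) = \rho(s)^{-1}\,\mathcal{H}_{t-s}(\mathcal{B}^{0,f})$. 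When $c\neq\id$ the root produces two independent offsprings with marks $f$ and $\nabla c$, and the bottom-to-top composition of elementary differentials contracts the vector-valued value of the $f$-subtree into the last derivative slot of the tensor-valued value of the $\nabla c$-subtree, yielding the bilinear factorization $\mathcal{H}_t(\mathcal{B}^{0,c}) = \rho(s)^{-1}\,\mathcal{H}_{t-s}(\mathcal{B}^{0,\nabla c})\,\mathcal{H}_{t-s}(\mathcal{B}^{0,f})$ with the two factors independent.

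Taking expectations I would integrate over $s$ against the density $\rho(s)$ of $T_\varnothing$; the factor $\rho(s)$ cancels $\rho(s)^{-1}$, and independence together with bilinearity of the contraction and the standing integrability hypothesis lets me factor the expectation of the product into $x_{\nabla c}(t-s)\,x_f(t-s)$ (resp. $x_f(t-s)$ when $c=\id$). After the change of variables $u=t-s$ this gives exactly
$$
x_c(t) = c(x_0) + \int_0^t \big( x_f(s)\,\ind_{\{c=\id\}} + x_{\nabla c}(s)\,x_f(s)\,\ind_{\{c\ne\id\}}\big)\,ds.
$$
Continuity in $t$ of the integrand follows from the integrability assumption by dominated convergence, so the integral equation may be differentiated; since every $c\in\mathcal{C}_f\cup\mathcal{C}_g$ satisfies $\nabla(\nabla^m h)=\nabla^{m+1}h$, this yields the ODE system of claim~(i). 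For claim~(ii), substituting $x_f = f(x_\id)$ into the $c=\id$ recursion gives $x_\id(t) = x_0 + \int_0^t f(x_\id(s))\,ds$, the integral form of the ODE~\eqref{ODE}.

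The main obstacle is the rigorous justification of the first-branching decomposition of $\mathcal{H}_t$, and in particular the compositional factorization when $c\neq\id$: one must verify that the bottom-to-top composition of elementary differentials defining the boundary product respects the split of the tree into its two offspring subtrees, so that it factors as the tensor contraction of two independent subtree values, and that the time-shift maps the boundary and interior factors of each subtree onto those of $\mathcal{B}^{0,f}$ and $\mathcal{B}^{0,\nabla c}$ at horizon $t-s$. Once this algebraic and renewal identity is established, the interchange of expectation with the contraction and the application of Fubini's theorem are routine under the assumed integrability.
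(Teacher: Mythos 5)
Your proposal is correct and follows essentially the same route as the paper's proof: a first-branching decomposition on $\{T_\varnothing>t\}$ versus $\{T_\varnothing\le t\}$, factorization of $\mathcal{H}_t$ into the independent offspring subtree functionals (the paper writes these as $\mathcal{H}_t\big(\mathcal{B}^{T_\varnothing,\nabla c}\big)\mathcal{H}_t\big(\mathcal{B}^{T_\varnothing,f}\big)$ rather than re-rooting at horizon $t-s$, which is the same thing by time-homogeneity), conditioning on $t_\varnothing$ to factor the expectation, cancelling $\rho(s)^{-1}$ against the density, and changing variables $s\mapsto t-s$. Your explicit attention to the compositional factorization of the boundary product and to the continuity needed to differentiate the recursion in part~(i) is, if anything, slightly more careful than the paper, which treats both points as immediate.
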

\begin{proof}
 Given $c \in {\cal C} \setminus \{ \id \}$, 
 we observe that $\mathcal{H}_t \left(\mathcal{B}^{0,c}\right)$
 satisfies %
\begin{align}
\nonumber %
  \mathcal{H}_t \left(\mathcal{B}^{0,c}\right) & = \mathcal{H}_t\left(\mathcal{B}^{0,c}\right) \left( \ind_{\{T_\varnothing>t\}} + \ind_{\{T_\varnothing\leq t\}} \right)
  \\
  \nonumber
  & = \frac{c (x_0)}{\widebar{F}_\rho (t-T_{\varnothing^{-}})} \ind_{\{T_\varnothing>t\}} + \ind_{\{T_\varnothing\leq t\}} \frac{1}{\rho(t_\varnothing)} \prod_{\mathbf k \in \mathcal{K}_t^{\partial}} \frac{c_{\mathbf k}(x_0)}{\widebar{F}_\rho (t-T_{\mathbf k^{-}})} \prod_{\mathbf k \in \mathcal{K}_t^\circ \setminus \{\varnothing\}} \frac{1}{\rho(t_\mathbf k)}
  \\
  \nonumber
  & = \frac{c (x_0)}{\widebar{F}_\rho(t)} \ind_{\{T_\varnothing>t\}} + \ind_{\{T_\varnothing\leq t\}} \frac{1}{\rho(t_\varnothing)}
  \big( \mathcal{H}_t\left(\mathcal{B}^{T_\varnothing,f}\right) \ind_{\{c=\id\}} + \mathcal{H}_t\left(\mathcal{B}^{T_\varnothing,\nabla c}\right) \mathcal{H}_t\left(\mathcal{B}^{T_\varnothing,f}\right) \ind_{\{c\ne\id\}} \big). 
\end{align} 
 Hence, by independence of tree branches, we have 
\begin{align*}
  x_c (t) &= \E \left[ \mathcal{H}_t\left(\mathcal{B}^{0,c}\right) \right]
  \\
  &= \E \left[ \frac{c (x_0)}{\widebar{F}_\rho(t)} \ind_{\{t_\varnothing>t\}} + \ind_{\{t_\varnothing\leq t\}} \frac{1}{\rho(t_\varnothing)} \mathcal{H}_t\left(\mathcal{B}^{t_\varnothing,\nabla c}\right) \mathcal{H}_t\left(\mathcal{B}^{t_\varnothing,f}\right) \right]
  \\
  &= c (x_0) \E \left[ \frac{1}{\widebar{F}_\rho(t)} \ind_{\{t_\varnothing>t\}} \right]
  + \E \left[ \ind_{\{t_\varnothing\leq t\}} \frac{1}{\rho(t_\varnothing)}
    \E \left[
      \mathcal{H}_t\left(\mathcal{B}^{t_\varnothing,\nabla c}\right)
      \ \! \big| \ \!
      t_\varnothing
      \right]
    \E \left[   \mathcal{H}_t\left(\mathcal{B}^{t_\varnothing,f}\right)
            \ \! \big| \ \!
      t_\varnothing
      \right]
\right]
  \\
  &= c (x_0) \E \left[ \frac{1}{\widebar{F}_\rho(t)} \ind_{\{t_\varnothing>t\}} \right]
  + \E \left[ \ind_{\{t_\varnothing\leq t\}} \frac{1}{\rho(t_\varnothing)}
    x_{\nabla c}(t - t_\varnothing )
    x_f(t - t_\varnothing ) 
\right]
\\
&= c (x_0) + \int_0^t x_{\nabla c}(t-s) x_f(t-s) ds
\\
    &= c (x_0) + \int_0^t x_{\nabla c}(s) x_f(s) ds.
\end{align*}
 In case $c=\id$, we similarly have
\begin{align*}
  x_\id (t) &= \E \left[ \mathcal{H}_t\left(\mathcal{B}^{0,\id}\right) \right]
  \\
  &= \E \left[ \frac{x_0}{\widebar{F}_\rho(t)} \ind_{\{t_\varnothing>t\}} + \ind_{\{t_\varnothing\leq t\}} \frac{1}{\rho(t_\varnothing)}
    \mathcal{H}_t\left(\mathcal{B}^{t_\varnothing,f}\right) \right]
  \\
    &= x_0 + 
    \int_0^t
    \E \left[
      \mathcal{H}_t\left(\mathcal{B}^{s,f}\right)
      \right]
    ds
          \\
          &= x_0 + \int_0^t x_f(t-s) ds
                   \\
    &= x_0 + \int_0^t x_f(s) ds.
\end{align*}
\end{proof}
\noindent 
 As a consequence of
 Proposition~\ref{Picard} and Lemma~\ref{lemma-uc} we have the following proposition, which recovers the conclusion of Theorem~4.2 in \cite{penent4}. 
\begin{prop}
 Under the conditions of Lemma~\ref{lemma-uc},
 assume that
 $(\nabla^k f)_{k\geq 0}$ 
 satisfies 
 \begin{enumerate}[\hypertarget{asmp-lp}{\bf (C2)}]
    \item 
    $(\nabla^k f(y))_{k\geq 0} \in \ell^p(\inte )$
      for some $p \in [1, \infty ]$  and
   all $y$ in a neighborhood $U$ of $x_0$
    in $\R^d$.     
  \end{enumerate}
 Then, for some $T>0$, $(x_\id(t))_{t\in [0,T]}$ defined
 in \eqref{uc} is a solution of the ODE~\eqref{ODE}. 
\end{prop}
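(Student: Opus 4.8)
The plan is to use part~(ii) of Lemma~\ref{lemma-uc}, which reduces the entire statement to the single identity $x_f = f(x_\id)$ on a small time interval: once this holds, the relation $x_\id(t) = x_0 + \int_0^t x_f(s)\,ds$ coming from the $c=\id$ case of Lemma~\ref{lemma-uc} becomes $x_\id'(t) = f(x_\id(t))$ with $x_\id(0)=x_0$, so that $x_\id$ solves~\eqref{ODE}. To prove $x_f = f(x_\id)$ I would identify the probabilistic family $(x_{\nabla^m f})_{m\geq 0}$ with the elementary differentials evaluated along a \emph{genuine} solution of the ODE, using the uniqueness in Proposition~\ref{Picard}. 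Note that one cannot attempt $x_{\nabla^m f}(t) = \nabla^m f(x_\id(t))$ directly, since checking that $(\nabla^m f(x_\id))_{m}$ solves~\eqref{ode-system-b} already requires $x_\id' = f(x_\id)$, i.e.\ the very conclusion sought; this is why the detour through an independently constructed ODE solution is needed.

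Concretely, taking $y = x_0 \in U$ in condition \textbf{(C2)} yields condition \textbf{(C1)}, so Proposition~\ref{Picard} applies and the subsystem~\eqref{ode-system-b} admits a \emph{unique} $\ell^p(\inte)$-valued solution on some interval $[0,T_1]$ with initial data $(\nabla^m f(x_0))_{m\geq 0}$. Since $f$ is smooth, the finite-dimensional Picard--Lindel\"of theorem provides a genuine local solution $x \in \mathcal{C}^1([0,T_2],\R^d)$ of~\eqref{ODE}; by continuity of $t\mapsto x(t)$ and after shrinking $T_2$, we have $x(t)\in U$ for all $t$, so \textbf{(C2)} guarantees $(\nabla^m f(x(t)))_{m\geq 0}\in \ell^p(\inte)$ on this interval. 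By Lemma~\ref{relation-ODE-ODEs} this sequence solves~\eqref{ode-system-b} with the correct initial data, and being $\ell^p(\inte)$-valued it must coincide, by uniqueness in Proposition~\ref{Picard}, with the solution above. On the other hand, Lemma~\ref{lemma-uc}(i) shows that $(x_{\nabla^m f})_{m\geq 0}$ also solves~\eqref{ode-system-b} with the same initial data. Granting that this family is $\ell^p(\inte)$-valued on a small interval, uniqueness forces $x_{\nabla^m f}(t) = \nabla^m f(x(t))$ for all $m\geq 0$; in particular $x_f(t) = f(x(t))$. Feeding this into the $c=\id$ recursion gives $x_\id(t) = x_0 + \int_0^t f(x(s))\,ds = x(t)$, whence $x_f = f(x) = f(x_\id)$, and Lemma~\ref{lemma-uc}(ii) concludes.

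The step I expect to be the main obstacle is verifying that the probabilistic family $(x_{\nabla^m f}(t))_{m\geq 0}$ is genuinely $\ell^p(\inte)$-valued on a (possibly shorter) time interval, since this is what licenses the comparison in Proposition~\ref{Picard}. The hypotheses of Lemma~\ref{lemma-uc} only give finiteness of each $x_{\nabla^m f}(t) = \E\big[\mathcal{H}_t(\mathcal{B}^{0,\nabla^m f})\big]$ separately, whereas here one needs summability (or boundedness, for $p=\infty$) of the whole sequence in $m$; this is precisely where \textbf{(C2)} on a full neighborhood of $x_0$, rather than at $x_0$ alone, must be exploited, controlling the $\ell^p$ norm of the leaf contributions $\nabla^{m+j}f(x_0)$ uniformly for small $t$. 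I would close this via a continuity/bootstrap argument: the sequence starts in $\ell^p(\inte)$ at $t=0$ and remains in a small $\ell^p(\inte)$-ball for $t$ small, so that both componentwise solutions stay in the domain where the Fr\'echet-differentiable nonlinearity $F(\mathbf x)=x^{(0)}\cdot S(\mathbf x)$ is Lipschitz and the uniqueness of Proposition~\ref{Picard} genuinely applies.
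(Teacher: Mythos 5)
Your proposal follows essentially the same route as the paper's proof: obtain a genuine local solution $x$ of \eqref{ODE} from the finite-dimensional Picard--Lindel\"of theorem, use \textbf{(C2)} to see that $(\nabla^m f(x(t)))_{m\geq 0}$ is an $\ell^p(\inte)$-valued solution of the subsystem~\eqref{ode-system-b} for small $T$, invoke the uniqueness of Proposition~\ref{Picard} to identify this family with the probabilistic family of Lemma~\ref{lemma-uc}, and conclude $x_f = f(x_\id)$ and $x = x_\id$. The one point you flag as delicate --- that the probabilistic family $(x_{\nabla^m f})_{m\geq 0}$ must itself be $\ell^p(\inte)$-valued before the uniqueness in Proposition~\ref{Picard} can be applied to it --- is passed over silently in the paper's own three-line proof as well, so your treatment is, if anything, the more careful of the two.
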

\begin{Proof} 
 Under Condition~\hyperlink{asmp-lp}{(C2)} 
 the ODE~\eqref{ODE} admits a ${\cal C}^1$ solution
 such that for some $T>0$ we have 
$$
 ( \nabla^k f(x(t)))_{k\geq 0} \in \ell^p(\inte ), \qquad t\in [0,T], 
$$
 for some $p \in [1,\infty ]$.
 In addition, from Proposition~\ref{Picard}
 the subsystem~\eqref{ode-system-b}
 admits a unique solution 
 in $\ell^p(\inte )$ on $[0,T]$,
 hence by Lemma~\ref{lemma-uc}
 we have 
 $x=x_{\id}$ and
 $\nabla^m f(x) = x_{\nabla^m f}$ for all $m\geq 0$.
\end{Proof} 
\noindent
 In comparison with Condition~\hyperlink{asmp-initial-lp}{(C1)}, %
  Condition~\hyperlink{asmp-lp}{(C2)} %
  fills the gap of regularity requirement on $f$
  between the well-posedness of
  the ODE system~\eqref{ode-system-a}-\eqref{ode-system-b}
 and that of the ODE~\eqref{ODE}, 
 as observed in Proposition~\ref{Picard}.

 \medskip

 Proposition~\ref{unf-intg} provides a probabilistic
interpretation for the solutions of
 the ODE system~\eqref{ode-system-a}-\eqref{ode-system-b}. 
 Sufficient conditions on
 $f$ for the integrability requirements on \eqref{functional-2} and
 \eqref{functional-1} below are provided in  
 Theorems~\ref{integ-1} and \ref{unf-integ-1}. 
 { \begin{prop}
  \label{unf-intg}
  Let $g \in {\cal C} \setminus \{ \id \}$. 
  \begin{enumerate}[i)]
  \item
    \label{unf-intg-2}
  Assume that the ODE system~\eqref{ode-system-a}-\eqref{ode-system-b} admits a
  solution
  $$
  \{ x_c \}_{c\in \{\it \id\}\cup {\cal C}_f }
  \subset \bigcup_{m\geq 1} {\cal C}^1([0,T],\R^m),
  $$
  and that the sequence of functionals of the tree
  $\mathcal{B}^{0,g}$ defined as 
  \begin{equation}
    \label{functional-2}
    \widetilde{\mathcal{H}}_t^{g,n}
    \big(
    \mathcal{B}^{0,g}
    \big)
    :=
    \hskip-0.1cm
    \prod_{\mathbf k \in \cup_{i=0}^n \mathcal{K}^{\partial,i}_t}
    \hskip-0.1cm
    \frac{c_{\mathbf k}(x_0) }{\widebar{F}_\rho (t-T_{\mathbf k^{-}})}
    \prod_{\mathbf k \in \cup_{i=0}^n \mathcal{K}^{\circ,i}_t}
    \frac{1}{\rho(t_{\mathbf k})}
    \prod_{\mathbf k \in \mathcal{K}^{n+1}_t(f)}
    \hskip-0.1cm
    x_{c_{\mathbf k}} \left( T_{\mathbf k^{-}} \right)
    \hskip-0.1cm
    \prod_{\mathbf k \in \mathcal{K}^{n+1}_t(g)}
    \hskip-0.1cm
    c_{\mathbf k} \left( x_\id(T_{\mathbf k^{-}}) \right)
 \end{equation}
  is uniformly integrable in $n\geq 0$ for all $t\in [0,T]$. Then, 
  the following representation holds: 
  \begin{equation}
    \label{fjkl1} 
  g(x_\id(t)) = \E \left[ \mathcal{H}_t\left(\mathcal{B}^{0,g}\right) \right],
  \qquad t\in[0,T].
\end{equation}
\item
  \label{unf-intg-i}
    Assume that the ODE system~\eqref{ode-system-g} admits a solution
  $$
  \{ x_c \}_{c\in{\cal C}_f\cup {\cal C}_g }
  \subset \bigcup_{m\geq 1} {\cal C}^1([0,T],\R^m), 
  $$
  and that the sequence 
  \begin{equation}
    \label{functional-1}
    \mathcal{H}_t^{g,n}
        \big(
    \mathcal{B}^{0,g}
    \big)
 := \prod_{\mathbf k \in \cup_{i=0}^n \mathcal{K}^{\partial,i}_t} \frac{c_{\mathbf k}(x_0) }{\widebar{F}_\rho (t-T_{\mathbf k^{-}})} \prod_{\mathbf k \in \cup_{i=0}^n \mathcal{K}^{\circ,i}_t} \frac{1}{\rho(t_{\mathbf k})} \prod_{\mathbf k \in \mathcal{K}^{n+1}_t} x_{c_{\mathbf k}} \left( T_{\mathbf k^{-}} \right)  
\end{equation}
  is uniformly integrable in $n\geq 0$ for all $t\in [0,T]$. 
  Then, the solution of the ODE system~\eqref{ode-system-g}
  is unique and the following representation holds: 
  \begin{equation}
    \label{fjkdls13} 
  x_g(t) = \E \left[ \mathcal{H}_t\left(\mathcal{B}^{0,g}\right) \right],
  \qquad t\in[0,T]. 
\end{equation}
\end{enumerate} 
  Moreover, if all above conditions are satisfied, then
  for all $g \in {\cal C} \setminus \{ \id \}$ we have
  $$
  g(x_\id(t)) =
  x_g(t) = \E \left[ \mathcal{H}_t\left(\mathcal{B}^{0,g}\right) \right],
  \qquad t\in[0,T].
$$ 
\end{prop}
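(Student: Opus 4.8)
The plan is to establish both representations \eqref{fjkl1} and \eqref{fjkdls13} through one common three-step scheme: an \emph{exact} identity for the truncated functionals holding for every fixed $n$, an almost-sure stabilization of those truncations onto $\mathcal{H}_t(\mathcal{B}^{0,g})$ as $n\to\infty$, and a passage to the limit justified by the uniform integrability hypotheses. Once both \eqref{fjkl1} and \eqref{fjkdls13} are proved, the concluding identity $g(x_\id(t))=x_g(t)$ is immediate, since the two display the \emph{same} right-hand side $\E[\mathcal{H}_t(\mathcal{B}^{0,g})]$.

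First I would prove, for every $n\geq 0$ and $t\in[0,T]$, the telescoping identities $\E\big[\mathcal{H}_t^{g,n}(\mathcal{B}^{0,g})\big]=x_g(t)$ and $\E\big[\widetilde{\mathcal{H}}_t^{g,n}(\mathcal{B}^{0,g})\big]=g(x_\id(t))$, by induction on $n$. The base case $n=0$ reproduces the computation in the proof of Lemma~\ref{lemma-uc}: one conditions on the root lifetime $T_\varnothing$ and uses $\widebar{F}_\rho(t)=\P(T_\varnothing>t)$ together with the integrated form $x_g(t)=g(x_0)+\int_0^t x_f(s)x_{\nabla g}(s)\,ds$ of the system \eqref{ode-system-g}. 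For the inductive step I would condition on the $\sigma$-field generated by the first $n$ generations (labels, marks and birth times) and invoke the branching property: given this information the sub-trees issued from distinct generation-$(n+1)$ branches evolve independently, and by exactly the recursion of Lemma~\ref{lemma-uc} the conditional expectation of the weight carried by the sub-tree of a branch $\mathbf k$ of mark $c_{\mathbf k}$ collapses to the capping factor $x_{c_{\mathbf k}}$ attached to $\mathbf k$ at residual time $t-T_{\mathbf k^-}$; this gives $\E[\mathcal{H}_t^{g,n+1}]=\E[\mathcal{H}_t^{g,n}]$, and likewise for $\widetilde{\mathcal{H}}$. The only change for \eqref{fjkl1} is that the $g$-class branches are capped by the compositions $y_{\nabla^m g}(t):=(\nabla^m g)(x_\id(t))$; by the chain rule and $x_\id'=x_f$ these satisfy $y_{\nabla^m g}'=x_f\,y_{\nabla^{m+1}g}$, the same recursion as in \eqref{ode-system-g}, so $(x_{\nabla^m f})_m\cup(y_{\nabla^m g})_m$ plays the role of a solution and the induction runs verbatim with target $y_g(t)=g(x_\id(t))$.

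Second, I would check that the underlying age-dependent (Bellman--Harris) tree is non-explosive. Since $\rho$ is a density with no atom at $0$, $\E[e^{-\theta T_\varnothing}]\to 0$ as $\theta\to\infty$; choosing $\theta$ with $2\,\E[e^{-\theta T_\varnothing}]<1$, a Chernoff bound gives $\E|\mathcal{K}_t|\leq\sum_{n\geq0}2^n\,\P(S_n\leq t)\leq e^{\theta t}\sum_{n\geq0}(2\,\E[e^{-\theta T_\varnothing}])^n<\infty$, where $S_n$ is the sum of $n$ i.i.d. lifetimes; hence the tree has almost surely finitely many branches in $[0,t]$. Consequently $\mathcal{K}^{n+1}_t=\varnothing$ for all $n$ beyond some random $N(\omega)$, and for such $n$ the capping product in \eqref{functional-1} (resp. \eqref{functional-2}) is empty while the remaining products exhaust $\mathcal{K}^\partial_t$ and $\mathcal{K}^\circ_t$, so that $\mathcal{H}_t^{g,n}=\mathcal{H}_t(\mathcal{B}^{0,g})$ identically: the truncations converge almost surely by eventual equality. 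The assumed uniform integrability then upgrades this to convergence in $L^1$ by Vitali's theorem, whence $\E[\mathcal{H}_t^{g,n}]\to\E[\mathcal{H}_t(\mathcal{B}^{0,g})]$; comparing with the constant value obtained in the first step yields \eqref{fjkdls13} and \eqref{fjkl1}.

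Finally, for the uniqueness claim accompanying \eqref{fjkdls13}, I would note that the telescoping identity of the first step holds for \emph{any} ${\cal C}^1$ solution of \eqref{ode-system-g}, while the almost-sure limit $\mathcal{H}_t(\mathcal{B}^{0,g})$ does not depend on the chosen solution (the solution enters only through the generation-$(n+1)$ caps, which vanish once $\mathcal{K}^{n+1}_t=\varnothing$); thus any solution satisfying the integrability hypothesis obeys $x_g(t)=\E[\mathcal{H}_t(\mathcal{B}^{0,g})]$, an expression fixed by $f$, $g$ and $x_0$ alone, and applying the same scheme with $g$ replaced by each $\nabla^m g$ and $\nabla^m f$ pins down every component. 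I expect the inductive telescoping to be the main obstacle: the branching property must be applied so that the conditional expectation of each generation-$(n+1)$ sub-tree reduces \emph{exactly} to the ODE cap, which is where the normalizations by $\widebar{F}_\rho(t-T_{\mathbf k^-})$ and by $\rho(t_{\mathbf k})$ are essential. The one genuinely probabilistic ingredient, the almost-sure finiteness of the tree at time $t$, is exactly the non-explosion of the branching process emphasized throughout the paper.
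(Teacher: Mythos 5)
Your proposal is correct and follows essentially the same route as the paper: the exact identities $\E\big[\mathcal{H}_t^{g,n}\big] = x_g(t)$ and $\E\big[\widetilde{\mathcal{H}}_t^{g,n}\big] = g(x_\id(t))$ are obtained by the same one-step conditioning on the root lifetime and iteration through the branching property (the paper's \eqref{eqn-1}, \eqref{eqn-10} and \eqref{eqn-1-1}, \eqref{fjlkf3}, with $g$-class branches handled exactly by your chain-rule observation $y_{\nabla^m g}' = x_f\, y_{\nabla^{m+1} g}$), followed by the same passage to the limit via uniform integrability. Your Chernoff-bound verification that $\E|\mathcal{K}_t|<\infty$, hence that $\mathcal{K}^{n+1}_t=\varnothing$ eventually and the truncations stabilize almost surely onto $\mathcal{H}_t\big(\mathcal{B}^{0,g}\big)$, is a worthwhile addition: the paper asserts this almost-sure convergence without proof, and your argument (like your explicit remark that the limit functional is independent of the chosen solution, which underlies the uniqueness claim) fills in details the paper leaves implicit rather than taking a different route.
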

\begin{proof}
$i)$ It follows from \eqref{ode-system-a}-\eqref{ode-system-b} that
\begin{align}
  \label{eqn-1-1}
  g(x_\id(t)) &= g (x_0) + \int_0^t \nabla g(x_\id(s)) x_f(s) ds
  \\
  \nonumber
  &= \E \left[ \frac{g (x_0)}{\widebar{F}_\rho(t)} \ind_{\{ T_\varnothing>t \}} + \ind_{\{T_\varnothing\leq t \}}\frac{1}{\rho(t_\varnothing)} \nabla g (x_\id(T_\varnothing )) x_f(T_\varnothing ) \right]
\\
  \nonumber
  &= 
 \E\big[\widetilde{\mathcal{H}}_t^{g,0}\big], \quad t\in [0,T].
\end{align} 
 Next, %
repeating the argument leading to \eqref{eqn-1-1}, 
we expand $\nabla g (x_\id(T_\varnothing ))$ for $\mathbf{k}=(1)$ as 
\begin{align}
  \nonumber
  \nabla g (x_\id(T_\varnothing )) & = \nabla g (x_\id( T_{\mathbf{k}^-} ))
  \\
  \label{fjlkf3}
  & 
  = \E \left[ \frac{\nabla g (x_0)}{\widebar{F}_\rho (t-T_{\mathbf{k}^-})} \ind_{\{ T_\mathbf{k}>t \}} + \ind_{\{T_\mathbf{k}\leq t \}} \frac{1}{\rho(t_\mathbf{k})} \nabla^2 g (x_\id(T_\mathbf{k} )) x_f(T_\mathbf{k} ) \ \! \Bigg| \ \! \F_{T_\varnothing} \right], 
\end{align}
 and plugging \eqref{fjlkf3} back into \eqref{eqn-1-1}
 shows that $g(x_\id(t)) = \E \big[\widetilde{\mathcal{H}}_t^{g,1}\big]$
 by conditional independence of branches in $\mathcal{K}^1_t$ given $\F_{T_\varnothing}$.
 Similarly, we can show by iterations that 
$$
g(x_\id(t)) = \E \big[\widetilde{\mathcal{H}}_t^{g,n}\big]
$$ 
 for all $n\geq 0$. 
 Then \eqref{fjkl1} follows by taking the limit as $n$ tends to infinity,
 from
 the uniform integrability of $\big(\widetilde{\mathcal{H}}_t^{g,n}\big)_{n\geq 0}$
 and the almost sure convergence of 
 $\widetilde{\mathcal{H}}_t^{g,n}
 $ to $\mathcal{H}_t(\mathcal{B}^{0,g})$. 

 \smallskip
 
\noindent
$ii)$ 
 By \eqref{ode-system-g}, we have 
\begin{align}
  \label{eqn-1}
  x_g(t) &= x_g (0) + \int_0^t x_{\nabla g}(s) x_f(s) ds
  \\
  \nonumber
  &= \E \left[ \frac{g (x_0)}{\widebar{F}_\rho(t)} \ind_{\{ T_\varnothing>t \}} + \ind_{\{T_\varnothing\leq t \}}\frac{1}{\rho(t_\varnothing)} x_{\nabla g}(T_\varnothing ) x_f(T_\varnothing ) \right]
  \\
  \nonumber
  & = \E[\mathcal{H}_t^{g,0}], \quad t\in [0,T].
\end{align}
 Next, since each offspring has same dynamics as
its parent branch, we can repeat the above argument to the 
 branch $\mathbf{k} = (1) \in \mathcal{K}^1_t$ with mark 
$g_1 \in \{ \nabla g , f \}$, 
to get 
\begin{align}
  \nonumber
  x_{g_1}(T_\varnothing ) & = x_{g_1}\left( T_{\mathbf{k}^-} \right)
  \\
  \label{eqn-10}
  & = \E \left[ \frac{g_1 (x_0)}{\widebar{F}_\rho (t-T_{\mathbf{k}^-})} \ind_{\{ T_\mathbf{k}>t \}} + \ind_{\{T_\mathbf{k}\leq t \}} \frac{1}{\rho(t_\mathbf{k})} x_{\nabla g_1}(T_\mathbf{k} ) x_f(T_\mathbf{k} ) \ \!
    \Bigg|  \ \!
    \F_{T_\varnothing} \right], 
\end{align} 
 and plugging \eqref{eqn-10} into \eqref{eqn-1}
 yields $x_g(t) = \E[\mathcal{H}_t^{g,1}]$ by conditional independence of
 branches in $\mathcal{K}^1_t$ given $\F_{T_\varnothing}$.
 Similarly, by iterations we find 
 $$x_g(t) = \E[\mathcal{H}_t^{g,n}],
 \qquad n\geq 0.
 $$ 
 As $\mathcal{H}_t^{g,n}$ converges to $\mathcal{H}_t(\mathcal{B}^{0,g})$ almost surely
 as $n$ tends to infinity, we obtain \eqref{fjkdls13} 
 from the uniform integrability of $\big(\mathcal{H}_t^{g,n}\big)_{n\geq 0}$. 
\end{proof}
\noindent 
\section{Probabilistic solution of ODEs} 
\label{sect3-1}
\noindent 
 Theorems~\ref{tjklf-3-2} and \ref{tjklf-3-1}
 show the existence and uniqueness of 
 a solution to the ODE~\eqref{ODE} on a time interval $[0,T]$ 
 by a probabilistic representation argument, 
 under uniform integrability assumptions on the sequences 
 $
  \big(  \widetilde{\mathcal{H}}_t^{f,n}
    \big(
    \mathcal{B}^{0,f}
    \big)
    \big)_{n\geq 0}
    $ and
    $\big( \mathcal{H}_t^{g,n}
        \big(
    \mathcal{B}^{0,g}
    \big)\big)_{n\geq 0}$
        respectively defined in \eqref{functional-2} 
 and 
 \eqref{functional-1}.
 In comparison with Theorem~4.2 of
 \cite{penent4}, the next result does
 not assume the existence
 of a solution to the ODE~\eqref{ODE}. 
\begin{theorem}[Existence] 
\label{tjklf-3-2}
Assume that
\begin{enumerate}[i)]
\item
  the functional 
  $\mathcal{H}_t \left(\mathcal{B}^{0,c}\right)$
  is integrable for all $t\in[0,T]$ and $c \in {\cal C}_f$,
  and
\item
  the sequence
$
  \big(  \widetilde{\mathcal{H}}_t^{f,n}
    \big(
    \mathcal{B}^{0,f}
    \big)
    \big)_{n\geq 0}
    $ defined in \eqref{functional-2} 
 is uniformly integrable for all $t\in [0,T]$.
\end{enumerate}
Then, the ODE~\eqref{ODE} has a solution $(x(t))_{t\in [0,T]} \in {\cal C}^1([0,T],\R^d)$
 which admits the probabilistic representation
  \begin{equation}
    \label{prob-rep-branching}
    x(t) = \E \left[ \mathcal{H}_t\left(\mathcal{B}^{0,\id}\right) \right], \quad t\in[0,T].
  \end{equation}
\end{theorem}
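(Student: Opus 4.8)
The plan is to produce the solution directly from the definition \eqref{uc} by chaining Lemma~\ref{lemma-uc} with Proposition~\ref{unf-intg}, using hypotheses i) and ii) to verify the premises of each, and then to read off \eqref{prob-rep-branching} from \eqref{uc}.

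First I would build the candidate solution of the ODE system. The key observation is that the subfamily $\{\id\}\cup{\cal C}_f = \{\id\}\cup\{\nabla^k f\}_{k\geq 0}$ is closed under the operation $c\mapsto\nabla c$ that drives the recursion of Lemma~\ref{lemma-uc}: one has $\nabla(\nabla^m f)=\nabla^{m+1}f\in{\cal C}_f$, while for $c=\id$ the recursion only invokes $x_f$. Hence the integrability assumed in i) for $c\in{\cal C}_f$, together with the trivial case $c=\id$, is precisely what is needed to define $x_c(t):=\E[\mathcal{H}_t(\mathcal{B}^{0,c})]$ for $c\in\{\id\}\cup{\cal C}_f$ and to run the recursion of Lemma~\ref{lemma-uc} within this subfamily. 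Differentiating that integral recursion shows that $(x_c)_{c\in\{\id\}\cup{\cal C}_f}$ solves \eqref{ode-system-a}-\eqref{ode-system-b}, and continuity of the integrands gives $x_c\in{\cal C}^1$.

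Next I would invoke Proposition~\ref{unf-intg}(i) with the choice $g=f$. Its first premise, existence of a ${\cal C}^1$ solution of \eqref{ode-system-a}-\eqref{ode-system-b}, is exactly what the previous step produced; its second premise, uniform integrability of $(\widetilde{\mathcal{H}}_t^{f,n}(\mathcal{B}^{0,f}))_{n\geq 0}$, is hypothesis ii). The conclusion \eqref{fjkl1} then reads $f(x_\id(t))=\E[\mathcal{H}_t(\mathcal{B}^{0,f})]$, and since the right-hand side equals $x_f(t)$ by \eqref{uc}, this yields the crucial identity $x_f=f(x_\id)$ on $[0,T]$.

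Finally, with $x_f=f(x_\id)$ in hand, Lemma~\ref{lemma-uc}(ii) states directly that $x_\id$ solves the ODE~\eqref{ODE}; since $x_\id'=f(x_\id)$ with $f\in{\cal C}^\infty$, bootstrapping confirms $x_\id\in{\cal C}^1([0,T],\R^d)$, and the representation \eqref{prob-rep-branching} is nothing but the definition $x(t)=x_\id(t)=\E[\mathcal{H}_t(\mathcal{B}^{0,\id})]$. I expect the only real obstacle to be the bookkeeping that legitimizes applying Lemma~\ref{lemma-uc}, which is stated for all $c\in{\cal C}$, under the weaker hypothesis that only $c\in{\cal C}_f$ is integrable; this hinges on the self-containedness of $\{\id\}\cup{\cal C}_f$ under $c\mapsto\nabla c$ noted above. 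A minor point is that feeding $g=f$ into Proposition~\ref{unf-intg}(i) presumes $f\neq\id$; the degenerate case $f=\id$, where \eqref{ODE} reduces to $x'=x$, can be verified directly.
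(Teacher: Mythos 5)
Your proposal is correct and follows essentially the same route as the paper's own proof: apply Lemma~\ref{lemma-uc} with $c\in\{\id\}\cup{\cal C}_f$ to get a solution of the system \eqref{ode-system-a}-\eqref{ode-system-b}, then Proposition~\ref{unf-intg}-$i)$ with $g=f$ to obtain $f(x_\id(t))=\E\left[\mathcal{H}_t\left(\mathcal{B}^{0,f}\right)\right]=x_f(t)$, and conclude via Lemma~\ref{lemma-uc}-$ii)$. Your two bookkeeping remarks---that $\{\id\}\cup{\cal C}_f$ is closed under $c\mapsto\nabla c$, so hypothesis $i)$ suffices to run the recursion of Lemma~\ref{lemma-uc} on this subfamily, and that taking $g=f$ presumes $f\neq\id$---are sound refinements of points the paper leaves implicit.
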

\begin{proof}
  By taking $c\in {\cal C}_f$ in Lemma~\ref{lemma-uc}, we see that the family
  $\{ x_c \}_{c\in \{\id\}\cup{\cal C}_f }$
  defined in \eqref{uc} solves the ODE system~\eqref{ode-system-a}-\eqref{ode-system-b}.
 Taking $g=f$ in Proposition~\ref{unf-intg}-\eqref{unf-intg-2},
 we have $f(x_\id(t)) = \E \left[ \mathcal{H}_t\left(\mathcal{B}^{0,f}\right) \right] = x_f(t)$ for all $t\in[0,T]$. It then follows from
 Lemma~\ref{lemma-uc} that $x_{\id}$ is a solution of the ODE~\eqref{ODE}.
\end{proof}
\noindent
 Under Assumption~\ref{asmpt-0},   
 Theorems~\ref{integ-1} and \ref{unf-integ-1} provide sufficient conditions on
 $f$ for the integrability required in Theorems~\ref{tjklf-3-2}
 and \ref{tjklf-3-1}. 
  By Theorems~\ref{tjklf-3-2} and \ref{integ-1}
   and the proof of Lemma~\ref{lemma-uc},
   under the conditions
   of Theorem~\ref{unf-integ-1} we also obtain the bound 
\begin{align*}
  x (t) %
  &= 
  x_0 + 
    \int_0^t
    \E \left[
      \mathcal{H}_t\left(\mathcal{B}^{s,f}\right)
      \right]
    ds
    \\
  & \leq 
  x_0 + 
    \int_0^t
  \frac{ e^{-\lambda s} C_0}{1 - (1-e^{-\lambda s}) C_0^{2}} 
    ds
    \\
  & = 
  x_0 + 
  \frac{t}{C_0} 
  + 
  \frac{1}{\lambda C_0}
  \log \frac{1}{C_0^{2} - ( C_0^{2} - 1 ) e^{\lambda t}},
  \qquad t\in [0,T]. 
\end{align*}
 where $C_0$ is defined in \eqref{eqrho}. 
\begin{theorem}[Uniqueness] 
\label{tjklf-3-1}
Assume that the ODE~\eqref{ODE} admits a solution $(x(t))_{t\in [0,T]} \in {\cal C}^1([0,T],\R^d)$, and
that 
\begin{enumerate}[i)]
\item the functional 
 $\mathcal{H}_t \left(\mathcal{B}^{0,c}\right)$
    is integrable for all $t\in[0,T]$ and $c \in {\cal C}_f$, and
  \item
    the sequence
 $\big( \mathcal{H}_t^{g,n}
        \big(
    \mathcal{B}^{0,g}
    \big)\big)_{n\geq 0}$
    defined in \eqref{functional-1}
 is uniformly integrable for all $t\in [0,T]$. 
\end{enumerate}
Then, $(x(t))_{t\in [0,T]} $ is the unique solution
      of the ODE~\eqref{ODE}, and it admits the
      probabilistic representation
      \eqref{prob-rep-branching}. 
\end{theorem}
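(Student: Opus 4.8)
The plan is to reduce the entire statement to the single identity $f(x(t)) = \E\big[\mathcal{H}_t(\mathcal{B}^{0,f})\big]$, from which both the explicit representation and uniqueness drop out simultaneously. Since $(x(t))_{t\in[0,T]}$ is assumed to solve the ODE~\eqref{ODE}, Lemma~\ref{relation-ODE-ODEs} tells me that the family $(\nabla^k f(x(t)))_{k\geq 0}$ solves the subsystem~\eqref{ode-system-b} with the correct initial data $\nabla^k f(x_0)$; moreover it lies in $\bigcup_m \mathcal{C}^1([0,T],\R^m)$, because $x$ is $\mathcal{C}^1$ and each $\nabla^k f$ is smooth. This is the object I will feed into the uniqueness half of the representation machinery.

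The heart of the argument is to invoke Proposition~\ref{unf-intg}, part (ii), with the choice $g=f$ (legitimate whenever $f\neq\id$; the linear case $f=\id$ is handled separately below). With $g=f$ the two lines of the augmented system~\eqref{ode-system-g} coincide, so~\eqref{ode-system-g} is exactly the subsystem~\eqref{ode-system-b} indexed by $\mathcal{C}_f$. By the previous paragraph this system already admits the $\mathcal{C}^1$ solution $(\nabla^k f(x))_{k\geq 0}$, so the existence hypothesis of Proposition~\ref{unf-intg}(ii) is satisfied; condition (ii) of the theorem supplies the uniform integrability of $\big(\mathcal{H}_t^{f,n}(\mathcal{B}^{0,f})\big)_{n\geq 0}$, and condition (i) makes $\E[\mathcal{H}_t(\mathcal{B}^{0,f})]$ finite. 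The proposition then forces the solution of~\eqref{ode-system-b} to be unique with $f$-component equal to $\E[\mathcal{H}_t(\mathcal{B}^{0,f})]$. Since $(\nabla^k f(x))_k$ is that unique solution, I conclude $f(x(t)) = \E[\mathcal{H}_t(\mathcal{B}^{0,f})]$ for all $t\in[0,T]$.

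To finish, I would feed this into the $\id$-equation~\eqref{ode-system-a}. As $x$ solves~\eqref{ODE}, $x(t) = x_0 + \int_0^t f(x(s))\,ds = x_0 + \int_0^t \E[\mathcal{H}_s(\mathcal{B}^{0,f})]\,ds$. Condition (i) also renders $\mathcal{H}_t(\mathcal{B}^{0,\id})$ integrable: the single-offspring recursion for $c=\id$ bounds $\E[|\mathcal{H}_t(\mathcal{B}^{0,\id})|]$ by $|x_0| + \int_0^t \E[|\mathcal{H}_s(\mathcal{B}^{0,f})|]\,ds$, so Lemma~\ref{lemma-uc} applies to $c=\id$ and gives $\E[\mathcal{H}_t(\mathcal{B}^{0,\id})] = x_0 + \int_0^t \E[\mathcal{H}_s(\mathcal{B}^{0,f})]\,ds$. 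Comparing the two expressions yields $x(t)=\E[\mathcal{H}_t(\mathcal{B}^{0,\id})]$, which is precisely the representation~\eqref{prob-rep-branching}. Because the right-hand side depends only on $f$ and $x_0$ (the leaf marks $c_{\mathbf k}(x_0)$ are elementary differentials of $f$ evaluated at $x_0$, and are insensitive to the chosen solution), every $\mathcal{C}^1$ solution equals this same deterministic function, which establishes uniqueness.

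The analytic content is entirely carried by Proposition~\ref{unf-intg}(ii) and Lemma~\ref{lemma-uc}, so the remaining obstacle is bookkeeping rather than estimation: verifying that the substitution $g=f$ collapses~\eqref{ode-system-g} onto~\eqref{ode-system-b}, that $(\nabla^k f(x))_k$ genuinely belongs to the solution space $\bigcup_m\mathcal{C}^1([0,T],\R^m)$ demanded there, and that integrability of the weights for the marks in $\mathcal{C}_f$ propagates to the mark $\id$. The only genuinely separate situation is $f=\id$, where Proposition~\ref{unf-intg}(ii) is not directly available; there the ODE~\eqref{ODE} is linear, uniqueness is immediate, and the representation~\eqref{prob-rep-branching} can be verified directly from the recursion.
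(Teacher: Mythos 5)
Your proof is correct and takes essentially the same route as the paper's: both apply Proposition~\ref{unf-intg}-$ii)$ with $g=f$ (under which the system~\eqref{ode-system-g} collapses onto~\eqref{ode-system-b}), identify the family $(\nabla^k f(x))_{k\geq 0}$ supplied by Lemma~\ref{relation-ODE-ODEs} with the unique solution so that $f(x(t))=\E\big[\mathcal{H}_t\big(\mathcal{B}^{0,f}\big)\big]$, and then recover $x=x_\id$ via the $c=\id$ recursion of Lemma~\ref{lemma-uc}. The only differences are that you make explicit two points the paper leaves implicit --- the propagation of integrability from $c\in{\cal C}_f$ to $c=\id$, and the corner case $f=\id$ excluded from the hypothesis of Proposition~\ref{unf-intg} --- both handled correctly.
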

\begin{proof}
  By taking $c\in {\cal C}_f$ in Lemma~\ref{lemma-uc}, we see that the family
  $\{ x_c \}_{c\in \{\id\}\cup{\cal C}_f }$
  defined in \eqref{uc} solves the ODE system~\eqref{ode-system-a}-\eqref{ode-system-b}.
  Taking $g=f$ in Proposition~\ref{unf-intg}-\eqref{unf-intg-i},
  we have the uniqueness for the ODE system~\eqref{ode-system-a}-\eqref{ode-system-b}. By Lemma~\ref{relation-ODE-ODEs},
  $\{ x_c \}_{c\in \{\id\}\cup{\cal C}_f }$
  coincides with $\{x, \nabla^m f(x) \ \! : \ \! m\geq 0 \}$.
  Thus, the solution $(x(t))_{t\in [0,T]}$
  admits the probabilistic representation $x = x_\id$,
  and it is unique. 
\end{proof}
\noindent
In Sections~\ref{sect4} and \ref{sect4-1}
we will provide sufficient 
conditions for the uniform integrability of
  $
  \big(  \widetilde{\mathcal{H}}_t^{f,n}
    \big(
    \mathcal{B}^{0,f}
    \big)
    \big)_{n\geq 0}
    $ and
    $\big( \mathcal{H}_t^{g,n}
        \big(
    \mathcal{B}^{0,g}
    \big)\big)_{n\geq 0}$
required in Proposition~\ref{unf-intg}
 and Theorems~\ref{tjklf-3-2}-\ref{tjklf-3-1}, 
 which also 
    imply the integrability of
    $\mathcal{H}_t\left(\mathcal{B}^{0,c}\right)$
    for $t\in[0,T]$ and $c \in {\cal C}_f$. 

\medskip 

 As a consequence,
 we have the following results 
 under uniform boundedness conditions on the derivatives of $f$.   
\begin{corollary}%
\label{t1}
 Let $\lambda, T>0$, and assume that for some $q>1$ we have 
\begin{equation}
\nonumber %
 |\nabla^k f (x_0)|
\leq
 \widebar{F}_\rho(T)
 \frac{2^{1/q}(1-e^{-\lambda T})^{1/(2q)}}{
 ( \sqrt{ 4 + e^{-2\lambda T} } - e^{-\lambda T} )^{1/q}}, 
 \quad \mbox{for all } k\geq 0.
\end{equation}
 Then, 
\begin{enumerate}[i)]
\item the ODE~\eqref{ODE} admits at most one solution
 in ${\cal C}^1([0,T],\R^d)$; 
\item
 if, in addition, 
 \begin{equation}
\label{cond-g-domain}
   \sup_{|x-x_0| < T / (1-e^{-\lambda T})^{1/(2q)}} \left| \nabla^m f(x) \right|
   < \frac{1}{(1-e^{-\lambda T})^{1/(2q)}},
   \quad \mbox{for all } m\geq 0, 
\end{equation} 
 then the ODE~\eqref{ODE} has a unique solution in ${\cal C}^1([0,T],\R^d)$, 
 which admits the probabilistic representation
 \eqref{prob-rep-branching}. 
\end{enumerate}
\end{corollary}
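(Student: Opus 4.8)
The plan is to verify that the stated pointwise bound on $|\nabla^k f(x_0)|$ is exactly the hypothesis needed to invoke Theorems~\ref{tjklf-3-2}, \ref{tjklf-3-1}, \ref{integ-1}, and \ref{unf-integ-1}, so the corollary is essentially a packaging of those results under an explicit, uniform-in-$k$ bound. First I would identify the quantity $C_0$ appearing in the integrability conclusions (defined in \eqref{eqrho}) and observe that the displayed right-hand side is engineered precisely so that the uniform bound on $|\nabla^k f(x_0)|$ translates into the threshold condition on $C_0$ that Theorem~\ref{unf-integ-1} requires for uniform integrability of the weighted-progeny functionals. Concretely, I expect the algebraic form $2^{1/q}(1-e^{-\lambda T})^{1/(2q)}/(\sqrt{4+e^{-2\lambda T}}-e^{-\lambda T})^{1/q}$ to be the inverse of the critical radius at which the branching-process generating function (analyzed in Proposition~\ref{radius-conv-weight-ct}) loses finiteness; the $q>1$ exponent reflects an $\ell^q$-type summability margin that guarantees strict subcriticality rather than mere criticality.

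For part~(i), since only \emph{uniqueness} is claimed, I would not assume existence of a solution. Instead I would apply Theorem~\ref{tjklf-3-1}: its hypotheses are (a) integrability of $\mathcal{H}_t(\mathcal{B}^{0,c})$ for $c\in{\cal C}_f$ and (b) uniform integrability of $\big(\mathcal{H}_t^{g,n}(\mathcal{B}^{0,g})\big)_{n\geq 0}$. The bound on $|\nabla^k f(x_0)|$ feeds into Theorem~\ref{integ-1} to give (a) and into Theorem~\ref{unf-integ-1} to give (b), both on $[0,T]$; Theorem~\ref{tjklf-3-1} then yields uniqueness. The subtlety is that Theorem~\ref{tjklf-3-1} as stated assumes a solution exists, so strictly the conclusion ``at most one solution'' should be read as: the uniqueness argument for the augmented system~\eqref{ode-system-g} via Proposition~\ref{unf-intg}-\eqref{unf-intg-i} forces any ${\cal C}^1$ solution of \eqref{ODE} to coincide with $x_\id$, hence there can be no two distinct solutions.

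For part~(ii), with the additional hypothesis \eqref{cond-g-domain} I would now secure \emph{existence} via Theorem~\ref{tjklf-3-2}, whose hypotheses are the same integrability/uniform-integrability conditions applied to $\big(\widetilde{\mathcal{H}}_t^{f,n}(\mathcal{B}^{0,f})\big)_{n\geq 0}$. The role of \eqref{cond-g-domain} is to upgrade the pointwise bound at $x_0$ (which only controls the subsystem~\eqref{ode-system-b} through Condition~\hyperlink{asmp-initial-lp}{(C1)}) to a neighborhood bound controlling $f$ on the ball $\{|x-x_0|<T/(1-e^{-\lambda T})^{1/(2q)}\}$, i.e. Condition~\hyperlink{asmp-lp}{(C2)}; the radius $T/(1-e^{-\lambda T})^{1/(2q)}$ is exactly the a~priori bound on $|x(t)-x_0|$ obtainable from the explicit growth estimate displayed before the statement. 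With both theorems in force, $x_\id(t)=\E[\mathcal{H}_t(\mathcal{B}^{0,\id})]$ is a solution (Theorem~\ref{tjklf-3-2}) and it is the unique one (part~(i)), giving the representation \eqref{prob-rep-branching}.

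The main obstacle I anticipate is purely bookkeeping: matching the explicit constant in the displayed bound to the threshold on $C_0$ inside Theorem~\ref{unf-integ-1}, and checking that the neighborhood radius in \eqref{cond-g-domain} dominates the a~priori solution excursion from $x_0$. Both reduce to the scalar inequalities governing the branching generating function, so once the correspondence $C_0 \leftrightarrow \widebar{F}_\rho(T)^{-1}|\nabla^k f(x_0)|$ is pinned down, the result follows by direct substitution with no further analytic input.
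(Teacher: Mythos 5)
Your proposal is correct and follows essentially the same route as the paper, whose proof is exactly the citation you assemble: part~$i)$ from Theorem~\ref{tjklf-3-1} together with Theorem~\ref{integ-1} and Theorem~\ref{unf-integ-1}-$i)$, and part~$ii)$ from Theorems~\ref{tjklf-3-2}--\ref{tjklf-3-1} together with Theorem~\ref{integ-1} and Theorem~\ref{unf-integ-1}-$ii)$, taking $g=f$ so that \eqref{cond-g-domain} is precisely \eqref{cond-g-domain-2} and the displayed bound on $|\nabla^k f(x_0)|/\widebar{F}_\rho(T)$ enforces the threshold \eqref{eqrho2} on $C_0$. One minor inaccuracy in your heuristic aside: for these bounded-mark results the finiteness threshold comes from the generating function \eqref{pgf-ct} via the stochastic dominance of Lemma~\ref{stoch-domin-1}, not from Proposition~\ref{radius-conv-weight-ct}, which is used only for the unbounded-mark Theorems~\ref{integ-3-2} and \ref{unf-integ-3-2}; this does not affect your argument.
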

\begin{Proof}
 Part~$i)$ is a consequence of
 Theorem~\ref{tjklf-3-1}, 
 Theorem~\ref{integ-1}, and
 Theorem~\ref{unf-integ-1}-$i)$; 
 Part~$ii)$ is a consequence of
 Theorems~\ref{tjklf-3-2}-\ref{tjklf-3-1},  
 Theorem~\ref{integ-1}, and
 Theorem~\ref{unf-integ-1}-$ii)$.
\end{Proof} 
\noindent
 We note that as $q$ tends to $1$,
 Condition~\eqref{cond-g-domain} is compatible
 for $k=0$ with Theorem~2.3 in \cite{coddington},
 which shows that 
 if $f$ is $C$-Lipschitz on an interval
 $[x_0-b,x_0+b]$, then \eqref{ODE} admits a unique solution
 $x(t)$ on the time interval 
 $[0,b/\Vert f\Vert_\infty]$. 

 \medskip
  
 The next result relaxes the existence conditions of Corollary~\ref{t1}
by allowing the growth of derivatives of $f$,
as a consequence of Theorems~\ref{tjklf-3-2}-\ref{tjklf-3-1}
and \ref{integ-3-2}-\ref{unf-integ-3-2}. 
\begin{corollary} %
  Let $\lambda, T>0$, and assume that 
  for some $q>1$ and $\delta > 0$ we have 
  $$
    | f(x_0) |
  <
  \frac{e^{\lambda T}}{ ( 2 (1-e^{-\lambda T})\delta )^{1/(2q)}}
  \quad
  \mbox{and}
  \quad 
  |\nabla^k f(x_0) |
  \leq e^{\lambda T} (k \delta)^{1/(2q)}, 
   \quad k\geq 1.  
   $$ %
   Then,
   \begin{enumerate}[i)]
    \item 
   the ODE~\eqref{ODE}
   admits at most one solution
   in ${\cal C}^1([0,T],\R^d)$; 
 \item
  if, in addition, 
 \begin{equation}
   \nonumber %
  \sup_{|x-x_0| < T / (1-e^{-\lambda T})^{1/(2q)}} \left| \nabla^m f(x) \right| 
  <
  \frac{1}{(1-e^{-\lambda T})^{1/(2q)} },
  \quad \mbox{for all } m \geq 0, 
\end{equation}
 then the ODE~\eqref{ODE} has a unique solution in ${\cal C}^1([0,T],\R^d)$
 which admits the probabilistic representation
 \eqref{prob-rep-branching}. 
  \end{enumerate}
\end{corollary}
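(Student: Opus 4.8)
The plan is to follow the same template as the proof of Corollary~\ref{t1}, combining the existence and uniqueness results Theorems~\ref{tjklf-3-2} and \ref{tjklf-3-1} with the integrability criteria, except that I substitute the growth-tolerant Theorems~\ref{integ-3-2} and \ref{unf-integ-3-2} for the bounded-derivative Theorems~\ref{integ-1} and \ref{unf-integ-1}. The first and only substantive step is therefore to check that the two-part bound on the initial data --- the strict inequality on $|f(x_0)|$ together with the power-type growth $|\nabla^k f(x_0)|\le e^{\lambda T}(k\delta)^{1/(2q)}$ for $k\ge 1$ --- is exactly of the admissible form required in the hypotheses of Theorem~\ref{integ-3-2}, so that $\mathcal{H}_t(\mathcal{B}^{0,c})$ is integrable for all $t\in[0,T]$ and $c\in{\cal C}_f$, and in the hypotheses of Theorem~\ref{unf-integ-3-2}.

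Once this matching is established, part~$i)$ follows directly: the integrability supplied by Theorem~\ref{integ-3-2} furnishes hypothesis~$i)$ of Theorem~\ref{tjklf-3-1}, while Theorem~\ref{unf-integ-3-2}-$i)$ gives the uniform integrability of $\big(\mathcal{H}_t^{g,n}(\mathcal{B}^{0,g})\big)_{n\ge0}$ required as hypothesis~$ii)$, so that Theorem~\ref{tjklf-3-1} yields the at-most-one conclusion (any ${\cal C}^1([0,T],\R^d)$ solution, if it exists, must coincide with $x_\id$). For part~$ii)$, the additional supremum bound \eqref{cond-g-domain} over the ball of radius $T/(1-e^{-\lambda T})^{1/(2q)}$ is precisely what Theorem~\ref{unf-integ-3-2}-$ii)$ needs in order to control the $g$-weights $c_{\mathbf k}(x_\id(T_{\mathbf k^-}))$ appearing in $\widetilde{\mathcal{H}}_t^{f,n}$, giving the uniform integrability of $\big(\widetilde{\mathcal{H}}_t^{f,n}(\mathcal{B}^{0,f})\big)_{n\ge0}$; Theorem~\ref{tjklf-3-2} then provides existence together with the probabilistic representation~\eqref{prob-rep-branching}, and part~$i)$ upgrades this to a unique solution.

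The main obstacle I expect is the verification in the first step, namely translating the explicit power-growth bound into the convergence condition underlying Theorems~\ref{integ-3-2}--\ref{unf-integ-3-2}. These rest on the branching-process stability analysis of Proposition~\ref{radius-conv-weight-ct}, whose integrability criterion is naturally phrased through a generating-function / radius-of-convergence condition on the weighted progeny. The delicacy is that the root weight $|f(x_0)|$ and the growth parameter $\delta$ enter with opposite dependence: raising the bounds to the power $2q$ (matching the $1/(2q)$ exponents in the hypotheses) gives $|\nabla^k f(x_0)|^{2q}\le e^{2q\lambda T}k\delta$, growing linearly in $k$, while $|f(x_0)|^{2q}$ is bounded by $e^{2q\lambda T}/\big(2(1-e^{-\lambda T})\delta\big)$, so that $\delta$ acts as a tuning parameter trading off the size of $f(x_0)$ against the admissible growth rate of the higher derivatives. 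The content of the verification is that, for the given $\delta>0$, the associated series $\sum_{k\ge1}|\nabla^k f(x_0)|^{2q}r^k$ with radius $r$ tied to $1-e^{-\lambda T}$ stays finite and the fixed-point / radius condition of Proposition~\ref{radius-conv-weight-ct} is met. Confirming that this balance reproduces exactly the admissible range in Theorem~\ref{integ-3-2} --- and not merely a sufficient variant --- is the one computation I would carry out in full, after which the invocations of Theorems~\ref{tjklf-3-2}--\ref{tjklf-3-1} are routine, as in Corollary~\ref{t1}.
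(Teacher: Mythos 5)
Your proposal is correct and is essentially the paper's own proof: the paper likewise disposes of part~$i)$ by citing Theorem~\ref{tjklf-3-1} together with Theorems~\ref{integ-3-2} and \ref{unf-integ-3-2}-$i)$, and of part~$ii)$ by citing Theorems~\ref{tjklf-3-2}--\ref{tjklf-3-1} together with Theorems~\ref{integ-3-2} and \ref{unf-integ-3-2}-$ii)$ for a specific choice of the parameters $\gamma$ and $\delta$. The hypothesis-matching computation you single out as the one substantive step (translating the linear-in-$k$ growth bound $|\nabla^k f(x_0)|^{2q}\lesssim k\delta$ and the reciprocal bound on $|f(x_0)|^{2q}$ into Condition~\eqref{upper-unbound-1-2} via Proposition~\ref{radius-conv-weight-ct}) is exactly what the paper leaves implicit in that parameter choice, so your plan is, if anything, slightly more explicit than the printed proof.
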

\begin{Proof}
 Part~$i)$ is a consequence of
 Theorem~\ref{tjklf-3-1}, 
 Theorem~\ref{integ-3-2}, and
 Theorem~\ref{unf-integ-3-2}-$i)$; 
 Part~$ii)$ is a consequence of
 Theorems~\ref{tjklf-3-2}-\ref{tjklf-3-1},  
 Theorem~\ref{integ-3-2}, and
 Theorem~\ref{unf-integ-3-2}-$ii)$ with $\delta = 1$ and $\gamma = 0$.
\end{Proof}

\section{Stochastic dominance of random binary trees} 
\label{sect5-1}
\noindent
 In this section, we present
 the stochastic dominance %
 results for branching processes, 
 that will be used in Section~\ref{sect4}. 
\noindent
 The sufficient conditions
 for the integrability of $\mathcal{H}_t \left(\mathcal{B}^{0,c}\right)$ 
 presented in Sections~\ref{sect4} and \ref{sect4-1}
 rely on an age-dependent continuous-time branching chain
 $( X_t )_{t\in[0,T]}$ generating a filtration
 $( \F_t )_{t\in [0,T]}$ on $(\Omega,\F,\P)$,
 and on its stochastic domination by 
 a continuous-time binary branching chain  
 $\big( \widetilde{X}_t\big)_{t\in [0,T]}$. 
\begin{definition}
  \begin{enumerate}[i)] 
  \item
    Let 
     \begin{equation}\label{discrete-chain}
  X_t := |\mathcal{K}_t^\pt| = 1 + \sum_{T_\mathbf k \leq t} \Delta X_{T_\mathbf k}
\end{equation}
 denote the binary branching chain starting from ${X}_0 = 1$ and 
 formed by the number of branches of $\mathcal{B}^{0,c}$ living
 at time $t>0$, 
 with offspring count 
 $1+\Delta X_{T_\mathbf k} = 2$ at any splitting time
 $T_\mathbf k$, 
 with lifetimes distribution ${\rho}$, and total progeny process 
\begin{equation}
  \label{sum-chain}
  N_t := |\mathcal{K}_t|
  = 1 + \sum_{T_\mathbf k \leq t} \left( 1+\Delta X_{T_\mathbf k} \right) = X_t + \sharp \{\mathbf k\in \mathbb K: T_\mathbf k \leq t\}, 
 \quad t\in [0,T]. 
\end{equation}
\noindent
\item
 Let $\big( \widetilde{X}_t\big)_{t\in [0,T]}$ be a 
 continuous-time binary branching chain  
 starting from $\widetilde{X}_0 = 1$,
 in which the lifetimes of branches are independent and
 identically distributed via a common
 exponential density function
$\widetilde{\rho} (t) = \lambda e^{-\lambda t}$, $t\geq 0$, 
 with parameter $\lambda > 0$,
 and progeny process $\widetilde{N}_t$. 
  \end{enumerate} 
\end{definition} 
   \noindent
 Recall that from e.g. Eq. (8) page 3 of \cite{kendall1948},
 \cite[Example 13.2 page 112]{harris1963}, 
 \cite[Example 5 page 109]{athreya},  
 the total progeny $\widetilde{N}_t$
 of $\big( \widetilde{X}_t\big)_{t\in \real_+}$ 
 has distribution 
\begin{equation}
\nonumber %
 {\P}\big( \widetilde{N}_t = m \big)
 =
\begin{cases}
 e^{-\lambda t} (1-e^{-\lambda t})^n, & m=2n+1, \ n \geq 0, %
\\
 0, & \text{otherwise}, \quad m \geq 0, 
\end{cases}
\end{equation}
 and probability generating function 
\begin{equation}
\label{pgf-ct}
G_t(z) = \E \big[ z^{\widetilde{N}_t} \big] %
= \frac{z e^{-\lambda t}}{1 - (1-e^{-\lambda t}) z^2}, \quad
 z < (1-e^{-\lambda t})^{-1/2}. 
\end{equation}
\noindent 
 Similarly to ${\mathcal{B}}^{0,c}$, 
 we consider the marked random branching tree
 $\widetilde{\mathcal{B}}^{0,j}$ constructed
 by assigning a mark $\tilde{c}_{\mathbf k}$ to each branch $\mathbf{k}$
 in the set $\widetilde{\mathcal{K}}_t$ of branches of
 the branching chain $\big( \widetilde{X}_t\big)_{t\in [0,T]}$, 
 in the following way:
 \begin{enumerate}[a)]
 \item
   the initial branch has label $\varnothing$ and is marked by
   $\tilde{c}_{\mathbf \varnothing} = j\in\N$;
 \item
   if a branch $\mathbf{k}$
   is marked by $\tilde{c}_{\mathbf k} = i\in\N$ and splits,
   its two children are
   respectively marked by $0$ and $i+1$. 
 \end{enumerate}

 \noindent
 Figure~\ref{fjkld23-3} 
 presents a sample of the corresponding
 random marked tree. %
 
\begin{figure}[H]
\centering
\tikzstyle{level 1}=[level distance=6cm, sibling distance=4cm]
\tikzstyle{level 2}=[level distance=6cm, sibling distance=5cm]
\tikzstyle{level 2}=[level distance=6cm, sibling distance=4cm]
\begin{center}
\resizebox{0.83\textwidth}{!}{
\begin{tikzpicture}[scale=1.0,grow=right, sloped]
        \node[rectangle,draw,black,text=black,thick, rounded corners=5pt,fill=gray!05] {\large $0$}
            child {
  node[name=data, %
         draw, rounded corners=5pt,fill=gray!05,xshift=1cm] (main) 
        {\large $T_{(1)}$ \nodepart{two} j}
                child{
                  node[%
                    draw,black,text=black,thick,yshift=0.4cm,rounded corners=5pt,fill=gray!05,xshift=2cm,yshift=-2cm]{\large $T_{(1,2)}$ \nodepart{two} j+1} 
                    child{
    node[name=data, %
         draw, rounded corners=5pt,yshift=0cm,fill=gray!05,xshift=4cm]  
        {\large $t$ \nodepart{two} j+2}
                    edge from parent
                    node[above,trapezium,draw,fill=gray!05]{$(1,2,2)$}
                    node[below,draw,rectangle,fill=gray!05]{\large $j+2$}%
                    }
                    child{
    node[name=data, %
         draw, rounded corners=5pt,yshift=-0.6cm,fill=gray!05,xshift=4cm]  
        {\large $t$ \nodepart{two} 0}
                    edge from parent
                    node[above,trapezium,draw,fill=gray!05]{$(1,2,1)$}
                    node[below,draw,rectangle,fill=gray!05]{\large $0$} %
                    }
                edge from parent
                node[above,trapezium,draw,fill=gray!05]{$(1,2)$}
                node[below,draw,rectangle,fill=gray!05]{\large $j+1$}%
                }
                child{
                  node[%
                    draw,black,text=black,thick,yshift=0.4cm,rounded corners=5pt,fill=gray!05]{\large $T_{(1,1)}$ \nodepart{two} 0} %
                    child{
                      node[%
                        draw,black,text=black,thick,yshift=0.4cm,rounded corners=5pt,fill=gray!05,xshift=1cm]{\large $T_{(1,1,2)}$ \nodepart{two} 1} 
                    child{
    node[name=data, %
         draw, rounded corners=5pt,yshift=0cm,fill=gray!05,xshift=-1cm]  
        {\large $t$ \nodepart{two} 2}
                    edge from parent
                    node[above,trapezium,draw,fill=gray!05]{$(1,1,2,2)$}
                    node[below,draw,rectangle,fill=gray!05]{\large $2$}%
                    }
                    child{
    node[name=data, %
         draw, rounded corners=5pt,yshift=-0.6cm,fill=gray!05,xshift=-1cm]  
        {\large $t$ \nodepart{two} 0}
                    edge from parent
                    node[above,trapezium,draw,fill=gray!05]{$(1,1,2,1)$}
                    node[below,draw,rectangle,fill=gray!05]{\large $0$} %
                    }
                    edge from parent
                    node[above,trapezium,draw,fill=gray!05]{$(1,1,2)$}
                    node[below,draw,rectangle,fill=gray!05]{\large $1$} %
                    }
                    child{
    node[name=data, %
         draw, rounded corners=5pt,fill=gray!05,xshift=6cm]  
        {\large $t$ \nodepart{two} 0}
                    edge from parent
                    node[above,trapezium,draw,fill=gray!05]{$(1,1,1)$}
                    node[below,draw,rectangle,fill=gray!05]{\large $0$} %
                    }
                edge from parent
                node[above,trapezium,draw,fill=gray!05]{$(1,1)$}
                node[below,draw,rectangle,fill=gray!05]{\large $0$} %
                }
                edge from parent
                node[above,trapezium,draw,fill=gray!05] {$(1)$}
                node[below,draw,rectangle,fill=gray!05]  {\large $j$} %
            };
\end{tikzpicture}
}
\end{center}
\vskip-0.3cm
\caption{Sample of
  $\widetilde{\mathcal{K}}_t$ for the random tree
  $\widetilde{\mathcal{B}}^{0,j}$ started from $j\in \inte$.}
\label{fjkld23-3}
\end{figure}
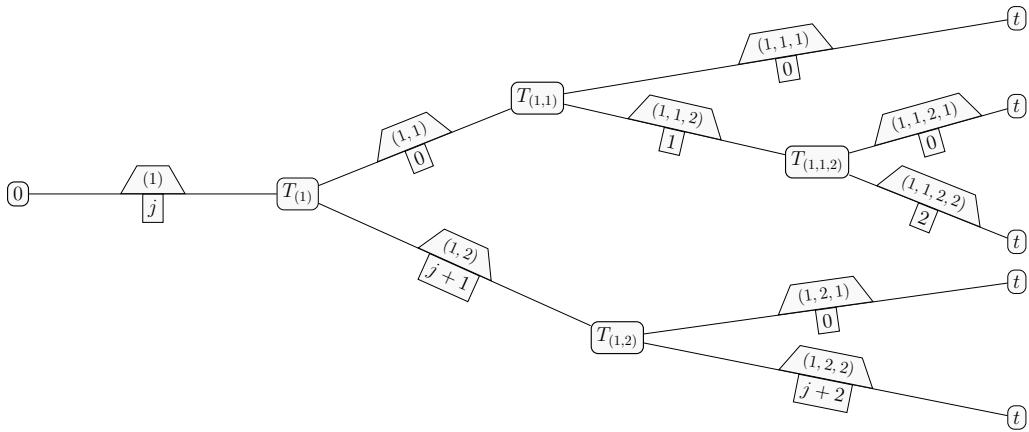

\vspace{-0.3cm}

\noindent
\begin{assumption}
\label{asmpt-0}
 There exists $\lambda > 0$ such that the lifetime probability density
 $\rho$ dominates the exponential distribution with parameter
 $\lambda$ in the sense that 
 \begin{equation}
   \label{asmpt-1-00}
   \widebar{F}_\rho(r) \geq
   e^{-\lambda r}
   = \widebar{F}_{\widetilde{\rho}} (r),
   \qquad r \geq 0. 
\end{equation}
\end{assumption}
\noindent
 An example of probability density function $\rho$
 satisfying Assumption~\ref{asmpt-0} is provided in 
 Lemma~\ref{rho-ext-1}. %
\begin{lemma}
\label{stoch-domin-1}
 Under Assumption~\ref{asmpt-0}, the processes
 $( X_t )_{t\in[0,T]}$
 and $( N_t )_{t\in[0,T]}$ are stochastically dominated by $\big( \widetilde{X}_t\big)_{t\in [0,T]}$ and
$\big( \widetilde{N}_t\big)_{t\in [0,T]}$ respectively,
i.e. 
  \begin{equation}
    \label{fjkd2}
\P \left( X_t \geq n \right) \leq \P \big( \widetilde{X}_t \geq n \big)
\quad
\mbox{and} 
\quad
 \P \left( N_t \geq n \right) \leq \P \big( \widetilde{N}_t \geq n \big),
    \quad n \geq 0,
    \
    t\in [0,T].
  \end{equation}
\end{lemma}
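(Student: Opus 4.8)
The plan is to construct an explicit coupling of the two trees on a common probability space, under which the split times of $\mathcal{B}^{0,c}$ uniformly dominate those of $\widetilde{\mathcal{B}}^{0,j}$, so that both the number of living branches and the total progeny of $\mathcal{B}^{0,c}$ are bounded pointwise by their counterparts for the exponential tree. The intuition is that Assumption~\ref{asmpt-0} makes the lifetime law $\rho$ stochastically dominate the exponential law $\widetilde\rho$; longer lifetimes cause branches to split later, hence fewer splits occur by any fixed time $t$, which produces fewer living branches and a smaller total progeny.

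First I would index the branches of the full binary tree by the label set $\mathbb{K}$ and attach to each $\mathbf{k}\in\mathbb{K}$ an independent uniform random variable $U_\mathbf{k}$ on $[0,1]$. Writing $F_\rho=1-\widebar{F}_\rho$ and $F_{\widetilde\rho}=1-\widebar{F}_{\widetilde\rho}$ for the two cumulative distribution functions, Assumption~\ref{asmpt-0} gives $F_\rho\leq F_{\widetilde\rho}$, so the generalized inverses satisfy $F_\rho^{-1}(u)\geq F_{\widetilde\rho}^{-1}(u)$ for every $u\in(0,1)$. Setting the lifetimes $t_\mathbf{k}:=F_\rho^{-1}(U_\mathbf{k})$ and $\widetilde{t}_\mathbf{k}:=F_{\widetilde\rho}^{-1}(U_\mathbf{k})$ then produces two families with the correct marginal laws $\rho$ and $\widetilde\rho$ and with $t_\mathbf{k}\geq\widetilde{t}_\mathbf{k}$ almost surely, simultaneously over all $\mathbf{k}\in\mathbb{K}$.

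Next I would propagate this domination to the split times. For $\mathbf{k}\in\mathbb{K}$ the split time is the sum $T_\mathbf{k}=\sum_{\mathbf{j}\preceq\mathbf{k}}t_\mathbf{j}$ of the lifetimes along the ancestral path from the root to $\mathbf{k}$ (inclusive), and likewise $\widetilde{T}_\mathbf{k}=\sum_{\mathbf{j}\preceq\mathbf{k}}\widetilde{t}_\mathbf{j}$, so summing the pointwise inequalities $t_\mathbf{j}\geq\widetilde{t}_\mathbf{j}$ over this path yields $T_\mathbf{k}\geq\widetilde{T}_\mathbf{k}$ for every $\mathbf{k}$. Hence $\ind_{\{T_\mathbf{k}\leq t\}}\leq\ind_{\{\widetilde{T}_\mathbf{k}\leq t\}}$ for each $\mathbf{k}$ and each $t$, and the split counts $S_t:=\#\{\mathbf{k}\in\mathbb{K}:T_\mathbf{k}\leq t\}$ satisfy $S_t\leq\widetilde{S}_t$ almost surely. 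Using the binary identities $X_t=1+S_t$ and $N_t=X_t+S_t=1+2S_t$ read off from \eqref{discrete-chain}--\eqref{sum-chain}, together with the same identities for the tilde process, this gives $X_t\leq\widetilde{X}_t$ and $N_t\leq\widetilde{N}_t$ pointwise under the coupling, from which \eqref{fjkd2} follows immediately.

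The one point requiring care is the bookkeeping of the branching structure, including the single-offspring convention at the root when $c=\id$. Since lifetimes are nonnegative, the split times are monotone along the tree, so any branch that has split by time $t$ automatically has all of its ancestors split by then; the common index set $\mathbb{K}$ can therefore be used for both trees without any consistency issue. When $c=\id$ the root yields one offspring instead of two, which only removes particles from $\mathcal{B}^{0,c}$ and hence can only decrease $X_t$ and $N_t$, so the inequalities in \eqref{fjkd2} persist a fortiori. I expect the main, though routine, obstacle to be verifying the counting identities $X_t=1+S_t$ and $N_t=1+2S_t$ and checking that the quantile coupling is compatible with the tree labelling.
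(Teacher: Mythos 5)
Your proof is correct, and it reaches \eqref{fjkd2} by a more constructive route than the paper. The paper keeps the two trees on separate footing: it writes $\P(X_t \ge n) = \E\left[ \Phi((t_{\mathbf k} : \mathbf k \in \mathbb K)) \right]$ for the coordinatewise non-increasing indicator $\Phi = \ind_{\{1+\sharp\{\mathbf k \, : \, T_{\mathbf k} \le t\} \ge n\}}$, observes that Assumption~\ref{asmpt-0} makes each lifetime $t_{\mathbf k}$ stochastically dominate its exponential counterpart $\tilde{t}_{\mathbf k}$, and then invokes the multivariate stochastic-order theorem for vectors with independent components, \cite[Theorem~6.B.3]{shaked2007stochastic}, together with the defining property \cite[Eq.~(6.B.4)]{shaked2007stochastic} of that order, to get $\E[\Phi((t_{\mathbf k}))] \le \E[\Phi((\tilde{t}_{\mathbf k}))]$; the bound for $N_t$ follows the same way from \eqref{sum-chain}. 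You instead realize both trees on one probability space via the quantile coupling $t_{\mathbf k} = F_\rho^{-1}(U_{\mathbf k})$, $\tilde{t}_{\mathbf k} = F_{\widetilde{\rho}}^{-1}(U_{\mathbf k})$ — which is precisely the standard proof of the theorem the paper cites — and push the almost-sure inequalities $t_{\mathbf k} \ge \tilde{t}_{\mathbf k}$ through the ancestral sums to get $T_{\mathbf k} \ge \widetilde{T}_{\mathbf k}$ for all $\mathbf k$, hence $S_t \le \widetilde{S}_t$ and the pathwise bounds $X_t \le \widetilde{X}_t$, $N_t \le \widetilde{N}_t$; your counting identities $X_t = 1+S_t$ and $N_t = 1+2S_t$ do follow from \eqref{discrete-chain}--\eqref{sum-chain}, since every split adds one living branch and two labels, and the common index set $\mathbb K$ is legitimate because both trees are full binary trees with the same deterministic branching structure. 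What your version buys is self-containedness and a strictly stronger conclusion — a monotone coupling of the entire processes rather than only an ordering of one-dimensional marginals — at the cost of the routine verifications you flag (monotonicity of generalized inverses under $F_\rho \le F_{\widetilde{\rho}}$, correctness of the marginals). Your explicit treatment of the $c=\id$ root convention, where the first split yields a single offspring so that removing particles only helps, addresses a point the paper's proof passes over silently, and is handled correctly by your a fortiori remark.
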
 
\begin{proof}
 By \eqref{discrete-chain} we have 
\begin{equation*}
  \P \left( X_t \geq n \right) = \P \left( 1+ \sharp \{\mathbf k\in \mathbb K \ : \ T_\mathbf k \leq t\} \geq n \right)
  = \E [ \Phi((t_\mathbf k: \mathbf k\in \mathbb K)) ],
   \qquad n \geq 0, 
\end{equation*}
where the function
$$
\Phi: (t_\mathbf k \ : \ \mathbf k\in \mathbb K) \mapsto \ind_{\{1+ \sharp \{\mathbf k\in \mathbb K \ : \ T_\mathbf k \le t\} \ge n\}}
$$
 is non-increasing in every variable $t_\mathbf k$, $k\in \mathbb K$.
Assumption~\ref{asmpt-0} implies that for every $\mathbf k\in \mathbb K$ and $s\ge 0$,
\begin{equation*}
  \P\left( T_\mathbf k - T_{\mathbf{k}^-} \le s \right) \le 1-e^{-\lambda s} = \P\big(
   \widetilde{T}_\mathbf k - \widetilde{T}_{\mathbf{k}^-} \le s \big),
\end{equation*}
i.e., $t_\mathbf k = T_\mathbf k - T_{\mathbf{k}^-}$ stochastically dominates $\tilde{t}_\mathbf k = \widetilde{T}_\mathbf k - \widetilde{T}_{\mathbf{k}^-}$, 
hence from \cite[Theorem 6.B.3]{shaked2007stochastic},
 the random vector $(t_\mathbf k: \mathbf k\in \mathbb K)$ stochastically dominates $(\tilde{t}_\mathbf k: \mathbf k\in \mathbb K)$.
Thus, using the definition of stochastic dominance for random vectors \cite[Eq.~(6.B.4)]{shaked2007stochastic}, we get
\begin{align*}
  \P \left( X_t \ge n \right) & = \E \left[ \Phi((t_\mathbf k \ : \ \mathbf k\in \mathbb K)) \right] \\
  & \le \E \left[ \Phi((\tilde{t}_\mathbf k \ : \ \mathbf k\in \mathbb K)) \right]
  \\
  & = \P \big( 1+ \sharp \{\mathbf k\in \mathbb K \ : \ \widetilde{T}_\mathbf k \le t\} \ge n \big)
  \\
   & = \P \big( \widetilde X_t \ge n \big), 
\end{align*}
 and \eqref{fjkd2} follows similarly from \eqref{sum-chain}
 and its counterpart for $\widetilde{N}_t$.
\end{proof}
\noindent
Finally, from \cite{huangprivault1} we have the following
integrability result for the multiplicative progeny
of the random tree $\widetilde{\mathcal{B}}^{0,j}$ started from $j\geq 1$. 
\begin{prop}
    \label{radius-conv-weight-ct}
(Corollary~3.5 in \cite{huangprivault1}). 
    Let $t\geq 0$, 
    $j \geq 0$,
    $\delta >0$, $\gamma > 1 $,
    and let $(\sigma (k))_{k\geq 0}$ be a real sequence such that 
    \begin{equation}
\nonumber %
   0\leq \sigma (0) < \frac{1}{
      (1-e^{-\lambda t})\gamma \delta } 
  \quad
  \mbox{and}
  \quad 
   0\leq \sigma (k) < (k-2+\gamma)\delta , \quad k\geq 1. 
\end{equation} 
 Then, we have the bound 
$$ 
 \E_j \left[ 
   \prod_{\mathbf k \in \widetilde{\mathcal{K}}_t} \sigma
   (\tilde{c}_{\mathbf k}) 
 \right] 
 \leq
  \frac{e^{-\lambda t} \sigma (j)}
  {
    ( 1- (1-e^{-\lambda t}) \gamma \delta \sigma (0) )^{
      1 + (j-1)/\gamma 
    }
    } < \infty, 
$$
 where $\E_j$ denotes expectation given that
 the random tree $\widetilde{\mathcal{B}}^{0,j}$ is started from
 the mark $j\geq 1$. 
\end{prop}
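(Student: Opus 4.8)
The plan is to set up a first-splitting (renewal) equation for the multiplicative functional, exhibit the claimed bound as a supersolution of that equation, and finally upgrade the supersolution inequality to the desired upper bound by a monotone iteration.

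Write $u_j(t):=\E_j\big[\prod_{\mathbf k\in\widetilde{\mathcal K}_t}\sigma(\tilde c_{\mathbf k})\big]$. First I would condition on the lifetime $T_\varnothing$ of the root, which is $\mathrm{Exp}(\lambda)$-distributed. If $T_\varnothing>t$, the root is the only branch alive in $[0,t]$ and the product equals $\sigma(j)$; if $T_\varnothing=s\le t$, the root (mark $j$) splits into two independent subtrees started from marks $0$ and $j+1$ over the remaining horizon $t-s$, while still contributing its own factor $\sigma(j)$. Using the branching property and the marking rule (a branch marked $i$ produces offspring marked $0$ and $i+1$), this yields the renewal equation
\begin{equation*}
u_j(t)=\sigma(j)e^{-\lambda t}+\sigma(j)\int_0^t \lambda e^{-\lambda s}\,u_0(t-s)\,u_{j+1}(t-s)\,ds .
\end{equation*}

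Next, set $\beta_j:=1+(j-1)/\gamma$ and $A(t):=1-(1-e^{-\lambda t})\gamma\delta\sigma(0)$, so that the claimed bound reads $\phi_j(t):=e^{-\lambda t}\sigma(j)A(t)^{-\beta_j}$; note $A(t)>0$ and $\phi_j(t)<\infty$ precisely under the hypothesis $\sigma(0)<1/((1-e^{-\lambda t})\gamma\delta)$, and $\beta_j>0$ because $\gamma>1$. The crux is to verify that $\phi$ is a supersolution. Plugging $\phi$ into the right-hand side, the product $\phi_0(t-s)\phi_{j+1}(t-s)$ carries the exponent $\beta_0+\beta_{j+1}=1+\beta_j$, and the substitution $u=e^{-\lambda\tau}$ in $\int_0^t e^{-\lambda\tau}A(\tau)^{-(1+\beta_j)}\,d\tau$ reduces it to the elementary antiderivative of $((1-c)+cu)^{-(1+\beta_j)}$ with $c=\gamma\delta\sigma(0)$, giving
\begin{equation*}
\sigma(j)\int_0^t\lambda e^{-\lambda s}\phi_0(t-s)\phi_{j+1}(t-s)\,ds
=\sigma(j)e^{-\lambda t}\,\frac{\sigma(j+1)}{(\gamma+j-1)\delta}\big(A(t)^{-\beta_j}-1\big).
\end{equation*}
Here $\gamma\delta\beta_j=(\gamma+j-1)\delta$, and the hypothesis $\sigma(j+1)<(\gamma+j-1)\delta$ (the bound $\sigma(k)<(k-2+\gamma)\delta$ at $k=j+1$) makes the displayed fraction $<1$; since $A(t)^{-\beta_j}\ge 1$, adding back $\sigma(j)e^{-\lambda t}$ gives $(\mathcal T\phi)_j(t)\le \sigma(j)e^{-\lambda t}A(t)^{-\beta_j}=\phi_j(t)$, where $\mathcal T$ denotes the right-hand side of the renewal equation viewed as an operator on nonnegative sequences $(v_j)_{j\ge 0}$. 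For the extremal sequence $\sigma(k)=(k-2+\gamma)\delta$, $k\ge1$, these inequalities become equalities, so $\phi$ is then the exact solution, which explains the sharpness of the bound.

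Finally I would conclude $u_j\le\phi_j$ by monotone iteration. The operator $\mathcal T$ is order-preserving on nonnegative sequences because $\sigma\ge 0$, the true functional satisfies $\mathcal T u=u$, and $\mathcal T^n 0$ is the expectation of the same product restricted to trees whose splitting depth is $<n$. Since the binary chain has $\E\big[\widetilde X_t\big]=e^{\lambda t}<\infty$, the tree $\widetilde{\mathcal K}_t$ is almost surely finite, so by monotone convergence $\mathcal T^n 0\uparrow u$; on the other hand an induction using $\mathcal T^n 0\le\mathcal T^n\phi$ together with the supersolution property $\mathcal T\phi\le\phi$ gives $\mathcal T^n 0\le\phi$ for every $n$, whence $u_j\le\phi_j<\infty$. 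The main obstacle is the explicit integral in the middle step: one must carry out the substitution carefully and recognize that the algebraic identity $\gamma\delta\beta_j=(\gamma+j-1)\delta$ matches the hypothesis exactly, so that the two structural constraints on $\sigma(0)$ and on $\sigma(k)$, $k\ge1$, are precisely what is needed for the telescoping to close; the passage to the limit then requires only the routine almost sure finiteness of the tree.
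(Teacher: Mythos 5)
Your proposal is correct, and it is worth noting that the paper itself contains no proof of this statement: it is imported verbatim from Corollary~3.5 of \cite{huangprivault1}, so your argument is necessarily an independent route. I checked the three steps in detail. The first-splitting decomposition is valid because the root always belongs to $\widetilde{\mathcal{K}}_t$ and contributes $\sigma(j)$ in both cases, and because the two subtrees born at $T_\varnothing=s$ are independent copies started from marks $0$ and $j+1$ run for time $t-s$; since all quantities are nonnegative, the renewal equation holds in $[0,\infty]$ with no integrability caveat. The supersolution computation is exact: with $c=\gamma\delta\sigma(0)$, $\beta_j = 1+(j-1)/\gamma$ and the substitution $u=e^{-\lambda\tau}$ one gets
\begin{equation*}
\int_0^t \lambda e^{-\lambda\tau}A(\tau)^{-(1+\beta_j)}\,d\tau
=\int_{e^{-\lambda t}}^1 \bigl((1-c)+cu\bigr)^{-(1+\beta_j)}du
=\frac{A(t)^{-\beta_j}-1}{\beta_j c},
\end{equation*}
the factor $\sigma(0)$ cancels against $c$, and the identity $\gamma\delta\beta_j=(\gamma+j-1)\delta$ matches the hypothesis $\sigma(j+1)\le(\gamma+j-1)\delta$ (your observation that the two structural constraints are exactly what closes the telescoping is the heart of the matter); note also $\beta_j>0$ needs $\gamma>1$ only at $j=0$, which is where that hypothesis enters. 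The monotone iteration is sound: $\mathcal T$ is order-preserving because $\sigma\geq 0$, the identification of $\mathcal T^n 0$ as the expectation restricted to trees of generation depth $<n$ is verified by induction, and $\mathcal T^n 0\uparrow u_j$ by monotone convergence since $\widetilde N_t=2\widetilde X_t-1$ is a.s.\ finite ($\E[\widetilde X_t]=e^{\lambda t}<\infty$ rules out explosion). Compared with the cited source, which develops generating functions for the weighted progeny of binary Galton--Watson trees with mutations and then specializes to continuous time via the law of $\widetilde N_t$, your supersolution-plus-iteration argument is shorter, fully self-contained within the continuous-time setting of this paper, and delivers exactly the stated upper bound; what it does not deliver is the finer generating-function information (e.g.\ exact radii of convergence) that the machinery of \cite{huangprivault1} provides, but none of that is needed here. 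Your aside on sharpness for the extremal sequence $\sigma(k)=(k-2+\gamma)\delta$ correctly identifies $\phi$ as a fixed point, though concluding $u=\phi$ there would additionally require a uniqueness argument for the fixed point; since it is an aside, this does not affect the proof.
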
 
\noindent 
\section{Integrability - bounded marks} 
\label{sect4}
\noindent
The goal of this section and the next one is to
derive the integrability results
 needed for the application of Proposition~\ref{unf-intg} and
 Theorems~\ref{tjklf-3-2}-\ref{tjklf-3-1}. 
\noindent
We fix an initial mark $c = g\in {\cal C} \setminus \{ \id \}$, %
 in which case 
$\mathcal{B}^{0,g}$ is a binary branching chain
bearing marks of the form either $\nabla^m f$ or $\nabla^m g$, $m\geq 0$.
\noindent
In Theorems~\ref{integ-1} and \ref{unf-integ-1}
we improve on \cite[Proposition~4.3]{penent4},
 see also \cite[Theorem~3.3]{bakhtin}, via a detailed
 study of the 
 integrability of $\mathcal{H}_t \left(\mathcal{B}^{0,g}\right)$
 by investigating the probability generating function 
 of the cardinality $|\mathcal{K}_t|$.  
 In comparison with e.g. Theorem~4.2 of
 \cite{penent4}, Theorems~\ref{integ-1} and \ref{unf-integ-1}
 provide sufficient conditions on equation coefficients
 for the (uniform) integrability of the random
 weights involved in the probabilistic representation
 of ODE solutions. 
\begin{theorem}[Integrability I]
\label{integ-1}
 Let $q\ge1$ and $g \in {\cal C} \setminus \{ \id \}$. 
 Under Assumption~%
\ref{asmpt-0},
 suppose that %
\begin{equation}
    \label{eqrho} 
    1 < 
    C_0 : =
  \max \left(
  \sup_{k\geq 0} \frac{|\nabla^k f (x_0)|}{\widebar{F}_\rho(T)},
  \sup_{k\geq 0} \frac{|\nabla^k g (x_0)|}{\widebar{F}_\rho(T)} , \frac{1}{\rho_*(T)}
  \right)
  < \frac{1}{\big( 1 - e^{-\lambda T}\big)^{1/(2q)}}. 
\end{equation}
 where $\rho_*(T) := \inf_{s\in[0,T]} \rho (s) > 0$. Then, we have the %
 $L^q$ bound 
\begin{equation*}
  \E \left[ \left| \mathcal{H}_t \left(\mathcal{B}^{0,c}\right) \right|^q \right] \leq \frac{ e^{-\lambda t} C_0^q}{1 - (1-e^{-\lambda t}) C_0^{2q}},
  \quad
  t\in[0,T],
  \ 
  c\in{\cal C}_f\cup {\cal C}_g. 
\end{equation*}
\end{theorem}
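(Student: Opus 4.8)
The plan is to reduce the $L^q$ estimate to the probability generating function of the total progeny $N_t = |\mathcal{K}_t|$, then invoke the stochastic domination of Lemma~\ref{stoch-domin-1} together with the explicit generating function \eqref{pgf-ct}.

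First I would establish the deterministic pointwise bound $\big| \mathcal{H}_t(\mathcal{B}^{0,c}) \big| \leq C_0^{N_t}$. Each boundary factor $c_{\mathbf k}(x_0)/\widebar{F}_\rho(t - T_{\mathbf k^-})$ is controlled as follows: along $\mathcal{B}^{0,c}$ with $c \in {\cal C}_f \cup {\cal C}_g$, the construction only produces marks of the form $\nabla^m f$ or $\nabla^m g$, so $|c_{\mathbf k}(x_0)| \leq C_0 \widebar{F}_\rho(T)$ by the definition \eqref{eqrho} of $C_0$; moreover $0 \leq t - T_{\mathbf k^-} \leq t \leq T$ and $\widebar{F}_\rho$ is nonincreasing, whence $\widebar{F}_\rho(t - T_{\mathbf k^-}) \geq \widebar{F}_\rho(T)$ and the factor is bounded by $C_0$. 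Each interior factor $1/\rho(t_{\mathbf k})$ satisfies $t_{\mathbf k} \in [0,T]$, so $\rho(t_{\mathbf k}) \geq \rho_*(T)$ and the factor is bounded by $1/\rho_*(T) \leq C_0$. Using submultiplicativity of the tensor norms for the composed elementary differentials and recalling $|\mathcal{K}_t| = |\mathcal{K}^\partial_t| + |\mathcal{K}^\circ_t| = N_t$ from \eqref{discrete-chain}--\eqref{sum-chain}, multiplying these bounds gives $\big|\mathcal{H}_t(\mathcal{B}^{0,c})\big| \leq C_0^{|\mathcal{K}^\partial_t| + |\mathcal{K}^\circ_t|} = C_0^{N_t}$.

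Raising to the power $q$ and taking expectations then yields $\E\big[\big|\mathcal{H}_t(\mathcal{B}^{0,c})\big|^q\big] \leq \E\big[(C_0^q)^{N_t}\big]$. Since $C_0 > 1$ and $q \geq 1$, the map $n \mapsto (C_0^q)^n$ is nondecreasing, so the stochastic domination $\P(N_t \geq n) \leq \P(\widetilde{N}_t \geq n)$ from Lemma~\ref{stoch-domin-1} gives $\E\big[(C_0^q)^{N_t}\big] \leq \E\big[(C_0^q)^{\widetilde{N}_t}\big] = G_t(C_0^q)$, with $G_t$ the generating function \eqref{pgf-ct}. Substituting $z = C_0^q$ into \eqref{pgf-ct} produces exactly the claimed right-hand side $e^{-\lambda t}C_0^q/\big(1 - (1-e^{-\lambda t})C_0^{2q}\big)$. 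Here Assumption~\ref{asmpt-0} enters only through Lemma~\ref{stoch-domin-1}, which is what licenses comparing $N_t$ with the exponential-lifetime progeny $\widetilde{N}_t$.

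The one point requiring care — and the step I would treat as the main obstacle — is verifying that $z = C_0^q$ lies strictly inside the radius of convergence $(1 - e^{-\lambda t})^{-1/2}$ of $G_t$ for \emph{every} $t \in [0,T]$, so that the closed form \eqref{pgf-ct} applies and the denominator $1 - (1 - e^{-\lambda t})C_0^{2q}$ stays positive. The upper bound in \eqref{eqrho} gives $C_0^q < (1 - e^{-\lambda T})^{-1/2}$, and since $t \leq T$ implies $1 - e^{-\lambda t} \leq 1 - e^{-\lambda T}$, hence $(1 - e^{-\lambda t})^{-1/2} \geq (1 - e^{-\lambda T})^{-1/2} > C_0^q$, the admissibility condition holds uniformly on $[0,T]$. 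Everything else is routine, so the conceptual heart of the argument is really the recognition that $\mathcal{H}_t$ is dominated by $C_0^{N_t}$, converting an $L^q$ moment into the evaluation of a single explicit progeny generating function.
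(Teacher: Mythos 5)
Your proposal is correct and follows essentially the same route as the paper's proof: bound $|\mathcal{H}_t(\mathcal{B}^{0,c})|$ by $C_0^{N_t}$ using \eqref{eqrho}, invoke the stochastic domination of Lemma~\ref{stoch-domin-1} with the increasing map $n \mapsto (C_0^q)^n$, and evaluate the generating function \eqref{pgf-ct} at $z = C_0^q$. The only difference is one of detail: you spell out the pointwise factor-by-factor bound and the uniform-in-$t$ admissibility check $C_0^q < (1-e^{-\lambda t})^{-1/2}$, both of which the paper leaves implicit in the single line $\E[|\mathcal{H}_t(\mathcal{B}^{0,c})|^q] \leq \E[(C_0^q)^{|\mathcal{K}_t|}] \leq \E[(C_0^q)^{\widetilde{N}_t}]$.
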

\begin{proof}
 Under Assumption~\ref{asmpt-0}, 
 $(N_t)_{t\in [0,T]} := ( |\mathcal{K}_t| )_{t\in [0,T]}$
 is stochastically dominated by
 $\big( \widetilde{N}_t \big)_{t\in [0,T]}$
 by Lemma~\ref{stoch-domin-1}.
 Hence, since $x\mapsto C_0^x$ is increasing,
 we have
\begin{equation*}
  \E \left[ \left|\mathcal{H}_t\left(\mathcal{B}^{0,c}\right)\right|^q \right] \leq \E \big[ ( C_0^q)^{|\mathcal{K}_t|} \big] %
    \leq %
    \E \big[ ( C_0^q)^{\widetilde{N}_t} \big], 
\end{equation*}
  see Section~1.A.1 of \cite{shaked2007stochastic}.
  We conclude from \eqref{pgf-ct},
  which shows that 
 $\E \big[ ( C_0^q )^{\widetilde{N}_t} \big] < \infty$
 since $C_0<(1-e^{-\lambda T})^{-1/(2q)}$ under \eqref{eqrho}, %
 for $t\in[0,T]$. 
\end{proof}
\noindent
 Condition~\eqref{eqrho} involves constraints on both the nonlinearity $f$ and the lifetime density $\rho$. 
    \begin{remark}\label{lemma-rescaling}
                 Let $\mu >0$. For the rescaled ODE
        \begin{equation*}
    x_\mu' (t) = \frac{1}{\mu} f(x_\mu(t)), \quad t\in (0, \mu T],
  \end{equation*}
 with solution $x_\mu(t) := x(t/\mu)$, 
 $C_0$ in \eqref{eqrho} is replaced with 
   \begin{equation*}
 C_{0,\mu}: = \max \left(\sup_{k\geq 0} \frac{|\nabla^k f (x_0)|}{\mu \widebar{F}_\rho(\mu T)},
    \sup_{k\geq 0}
    \frac{|\nabla^k g (x_0)|}{\widebar{F}_\rho(\mu T)} ,
    \frac{1}{\rho_*(\mu T)}
    \right). 
  \end{equation*}
  In this case, the constraint on $\rho$ is now realized
  as an upper bound on $\mu T$,
  hence a smaller $\mu$ close to zero yields
  a looser constraint on $T$, but can 
  translate into a stricter constraint on~$f$.
\end{remark}
    \noindent
     In Lemma~\ref{rho-ext-1} we show 
 the existence of a probability density function $\rho: [0,\infty) \to \R$
    satisfying both Assumption~\ref{asmpt-0}
    and Condition~\eqref{eqrho}, resp. \eqref{eqrho2}, 
    using an upper bound on the existence time $T$ of the solution.
\begin{theorem}[Uniform integrability I]
\label{unf-integ-1}
 Let $q\ge1$ and $g \in {\cal C} \setminus \{ \id \}$. 
 Under Assumption~%
\ref{asmpt-0} and \eqref{eqrho},
 suppose in addition that
  \begin{equation}
    \label{eqrho2} 
     1 < C_0^q < 
 \frac{
   \sqrt{ 4 + e^{-2\lambda T} } - e^{-\lambda T} }{2\sqrt{1-e^{-\lambda T}}}, 
\end{equation} 
 where $C_0$ is defined in \eqref{eqrho}. 
 Then, $\mathcal{H}_t^{g,n}$ 
 defined in \eqref{functional-1} is $L^q$-integrable,
 uniformly in $t\in[0,T]$ and $n\geq 0$.
 Moreover, 
\begin{enumerate}[i)] 
\item
 if $q>1$, then $\big( \mathcal{H}_t^{g,n} \big)_{(n,t)\in \inte \times \real_+}$ is
 uniformly integrable; %
\item
 if %
\begin{equation}\label{cond-g-domain-2}
  \sup_{|x-x_0| < T (1-e^{-\lambda T})^{-1/(2q)}} \left| \nabla^m g(x) \right|
  < \frac{1}{(1-e^{-\lambda T})^{1/(2q)}},
  \quad
  \mbox{for all } m\geq 0, 
\end{equation}
then $\widetilde{\mathcal{H}}_t^{g,n}$ defined in \eqref{functional-2} is $L^q$-integrable, uniformly in $t\in[0,T]$ and $n\geq 0$. 
\end{enumerate}
\end{theorem}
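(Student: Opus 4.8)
The plan is to bound $|\mathcal{H}_t^{g,n}|^q$ (and later $|\widetilde{\mathcal{H}}_t^{g,n}|^q$) pointwise by $M_q^{N_t}$ for a base $M_q\in\big(1,(1-e^{-\lambda T})^{-1/2}\big)$ depending only on $C_0,q,\lambda,T$, and then to integrate using the stochastic domination of $N_t=|\mathcal{K}_t|$ by $\widetilde N_t$ from Lemma~\ref{stoch-domin-1} together with the generating function \eqref{pgf-ct}. First I would record uniform bounds on the three types of factors in \eqref{functional-1}. By \eqref{eqrho} and the monotonicity of $\widebar F_\rho$, every boundary factor obeys $|c_{\mathbf k}(x_0)|/\widebar F_\rho(t-T_{\mathbf k^-})\le C_0\,\widebar F_\rho(T)/\widebar F_\rho(t-T_{\mathbf k^-})\le C_0$ since $t-T_{\mathbf k^-}\le T$, and every interior factor obeys $1/\rho(t_{\mathbf k})\le 1/\rho_*(T)\le C_0$. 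For the terminal factors I would combine Theorem~\ref{integ-1} (which covers $q\ge1$), Lemma~\ref{lemma-uc} and Jensen's inequality to get, for every $c\in{\cal C}_f\cup{\cal C}_g$ and $s\in[0,T]$,
$$
|x_c(s)|^q=\big|\E\big[\mathcal{H}_s(\mathcal{B}^{0,c})\big]\big|^q\le\E\big[|\mathcal{H}_s(\mathcal{B}^{0,c})|^q\big]\le\frac{e^{-\lambda s}C_0^q}{1-(1-e^{-\lambda s})C_0^{2q}}\le M_q,
$$
where $M_q:=\dfrac{e^{-\lambda T}C_0^q}{1-(1-e^{-\lambda T})C_0^{2q}}$, the last step using the elementary fact that $s\mapsto e^{-\lambda s}K/(1-(1-e^{-\lambda s})K^2)$ is nondecreasing on $[0,T]$ whenever $K>1$ (here $K=C_0^q$). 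A short computation gives $1<C_0^q\le M_q$ and shows that Condition~\eqref{eqrho2} is exactly the statement $M_q<(1-e^{-\lambda T})^{-1/2}$.

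With these bounds, let $A$ denote the number of branches of generation at most $n$, i.e. those indexing the first two products, and note $A+|\mathcal{K}^{n+1}_t|\le|\mathcal{K}_t|=N_t$. Bounding each of the $A$ weight factors by $C_0^q\le M_q$ and each of the $|\mathcal{K}^{n+1}_t|$ terminal factors $|x_{c_{\mathbf k}}(T_{\mathbf k^-})|^q$ by $M_q$ gives $|\mathcal{H}_t^{g,n}|^q\le M_q^{N_t}$. Since $x\mapsto M_q^x$ is increasing and $N_t$ is stochastically dominated by $\widetilde N_t$ (Lemma~\ref{stoch-domin-1}),
$$
\E\big[|\mathcal{H}_t^{g,n}|^q\big]\le\E\big[M_q^{\widetilde N_t}\big]=G_t(M_q)\le G_T(M_q)<\infty,
$$
where the last two steps use \eqref{pgf-ct} and the same monotonicity (now in $t$, with $K=M_q$), and finiteness is precisely $M_q<(1-e^{-\lambda T})^{-1/2}$, i.e. \eqref{eqrho2}. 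This proves the uniform $L^q$ bound, and for part~i) uniform integrability of $\big(\mathcal{H}_t^{g,n}\big)_{(n,t)}$ when $q>1$ follows at once, a family bounded in $L^q$ for some $q>1$ being uniformly integrable.

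For part~ii) I would treat $\widetilde{\mathcal{H}}_t^{g,n}$ from \eqref{functional-2} similarly: the weight factors and the $f$-terminal factors $x_{c_{\mathbf k}}(T_{\mathbf k^-})$, $\mathbf k\in\mathcal{K}^{n+1}_t(f)$, are bounded by $M_q$ as above. The new ingredient is the product of $g$-terminal factors $c_{\mathbf k}(x_\id(T_{\mathbf k^-}))$, $\mathbf k\in\mathcal{K}^{n+1}_t(g)$. The key structural observation is that, by the branching rule of $\mathcal{B}^{0,g}$, the marks $\nabla^m g$ propagate only along the single spine $\varnothing,(2),(2,2),\dots$, so there is at most one $g$-branch per generation and hence $|\mathcal{K}^{n+1}_t(g)|\le1$. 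To bound that single factor I would use \eqref{ode-system-a} in the form $x_\id(s)=x_0+\int_0^s x_f(r)\,dr$; since $|x_f(r)|\le M_q^{1/q}<(1-e^{-\lambda T})^{-1/(2q)}$ by \eqref{eqrho2}, one gets $|x_\id(s)-x_0|\le s\,(1-e^{-\lambda T})^{-1/(2q)}\le T(1-e^{-\lambda T})^{-1/(2q)}$ for $s\in[0,T]$, so $x_\id(T_{\mathbf k^-})$ lies in the ball on which \eqref{cond-g-domain-2} applies and $|c_{\mathbf k}(x_\id(T_{\mathbf k^-}))|^q<(1-e^{-\lambda T})^{-1/2}$. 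Bounding this lone factor by the constant $(1-e^{-\lambda T})^{-1/2}$ and the remaining $A+|\mathcal{K}^{n+1}_t(f)|\le N_t$ factors by $M_q$ yields $|\widetilde{\mathcal{H}}_t^{g,n}|^q\le(1-e^{-\lambda T})^{-1/2}M_q^{N_t}$, whence $\E\big[|\widetilde{\mathcal{H}}_t^{g,n}|^q\big]\le(1-e^{-\lambda T})^{-1/2}\,G_T(M_q)<\infty$ uniformly in $(n,t)$.

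The main obstacle is precisely this last step: bounding the $g$-terminal factors by the constant $(1-e^{-\lambda T})^{-1/2}$ and folding them into the exponent of $N_t$ would push the base up to the critical value $(1-e^{-\lambda T})^{-1/2}$, at which $G_T$ diverges. The spine structure is what saves the estimate, by confining the $g$-contribution to a single bounded factor sitting outside the exponent; it must be combined with the self-consistency check that the solution $x_\id$ never leaves the ball of radius $T(1-e^{-\lambda T})^{-1/(2q)}$ figuring in \eqref{cond-g-domain-2}, which again rests on \eqref{eqrho2}. The auxiliary computations — the monotonicity of $s\mapsto e^{-\lambda s}K/(1-(1-e^{-\lambda s})K^2)$, the identity between \eqref{eqrho2} and $M_q<(1-e^{-\lambda T})^{-1/2}$, and $C_0^q\le M_q$ — are routine.
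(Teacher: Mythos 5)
Your proof is correct, and for the uniform $L^q$ bound and part~i) it is essentially the paper's own argument: the paper likewise bounds every boundary factor by $C_0$ (using $t-T_{\mathbf k^-}\le T$ and the monotonicity of $\widebar F_\rho$), every interior factor by $1/\rho_*(T)\le C_0$, and the terminal factors via Theorem~\ref{integ-1}, Lemma~\ref{lemma-uc} and Jensen by a single constant (the paper's $C_1^q$ plays the role of your $M_q$, up to a maximum), then integrates $(C_1^q)^{|\mathcal{K}_t|}$ using Lemma~\ref{stoch-domin-1} and \eqref{pgf-ct}; your identification of \eqref{eqrho2} with $M_q<(1-e^{-\lambda T})^{-1/2}$, via the quadratic inequality $uK^2+e^{-\lambda T}\sqrt{u}\,K-1<0$ with $u=1-e^{-\lambda T}$, $K=C_0^q$, is exactly the computation underlying the paper's threshold, and your monotonicity claims check out. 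Where you genuinely diverge is part~ii). The paper folds the $g$-terminal factors into the geometric base, setting $C_2^q:=\max\big(C_0^q,\sup_{c\in{\cal C}_f,\,t}|x_c(t)|^q,\sup_{c\in{\cal C}_g,\,t}|c(x_\id(t))|^q\big)$, asserting $C_2^q<(1-e^{-\lambda T})^{-1/2}$ from \eqref{cond-g-domain-2} together with the a priori estimate $|x_\id(t)-x_0|<T(1-e^{-\lambda T})^{-1/(2q)}$ (which the paper obtains by the same H\"older/integral step you use), and then bounds $|\widetilde{\mathcal{H}}_t^{g,n}|^q\le (C_2^q)^{|\mathcal{K}_t|}$. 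You instead observe that by the branching rule (child $1$ always carries $f$, only child $2$ carries $\nabla c_{\mathbf k}$) the ${\cal C}_g$ marks live on the single spine $\varnothing,(2),(2,2),\ldots$, so $|\mathcal{K}^{n+1}_t(g)|\le 1$, and you pull the lone $g$-factor out of the exponent as a multiplicative constant bounded by $(1-e^{-\lambda T})^{-1/2}$. This buys you something real: \eqref{cond-g-domain-2} is a strict bound for each $m$ separately, so the supremum over infinitely many $m\ge 0$ entering the paper's $C_2^q$ could in principle equal the critical value $(1-e^{-\lambda T})^{-1/2}$, at which $G_T$ diverges --- the paper's strict inequality for $C_2^q$ implicitly assumes uniformity in $m$, whereas your spine argument needs no such uniformity and keeps the base at $M_q$, strictly subcritical by \eqref{eqrho2}. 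The cost is one extra (correct) structural observation about the tree; everything else, including the $L^q$-boundedness-implies-uniform-integrability step for $q>1$ in part~i), matches the paper.
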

\begin{proof}
\noindent
$i)$ One can check that Condition~\eqref{eqrho}
 is satisfied under \eqref{eqrho2}, 
hence by Theorem~\ref{integ-1},
$\mathcal{H}_t\left(\mathcal{B}^{0,c}\right)$ is $L^q$-integrable,
uniformly in $t\in[0,T]$ and $c\in{\cal C}_f\cup {\cal C}_g$. It also follows
from Lemma~\ref{lemma-uc} that the family
$$
 \{ x_c \}_{c\in {\cal C}_f\cup{\cal C}_g} = 
 \{ \E [ \mathcal{H}_t(\mathcal{B}^{0,c}) ] \}_{c\in {\cal C}_f\cup{\cal C}_g}
 $$
solves the ODE system~\eqref{ode-system-g}. On the other hand, by
Theorem~\ref{integ-1} %
we have 
  \begin{align}
    \label{eqn-11}
    |x_c (t)|^q & \leq  \E \left[ \left| \mathcal{H}_t\left( \mathcal{B}^{0,c} \right) \right|^q \right]
    \\
    \nonumber
    & \leq 
    \ind_{\{C_0\leq 1\}} + \frac{C_0^q e^{-\lambda T}}{1 - (1-e^{-\lambda T}) C_0^{2q}} \ind_{\{C_0> 1\}}. 
\end{align}
  Under \eqref{eqrho2} this yields
$$
  |x_c (t)|^q <  \frac{1}{\sqrt{1-e^{-\lambda T}}}, 
    \qquad
    c\in {\cal C}_f\cup{\cal C}_g, \ t\in[0,T], 
$$
 hence it follows from \eqref{functional-1} that
  \begin{equation*}
    \left| \mathcal{H}_t^{g,n} 
            \big(
    \mathcal{B}^{0,g}
    \big)
    \right|^q \leq ( C_1^q)^{ | \mathcal{K}_t^{n+1} |} \leq
    ( C_1^q)^{\left| \mathcal{K}_t \right|},  
  \end{equation*}
 where \begin{equation*}
    C_1^q := \max \left(
    C_0^q ,
    \sup_{c\in {\cal C}_f\cup{\cal C}_g, \ \! t\in [0,T]} |x_c (t)|^q
    \right)
    < \frac{1}{\sqrt{1-e^{-\lambda T}}}. 
\end{equation*}
 From \eqref{pgf-ct}, we conclude that 
 $\mathcal{H}_t^{g,n}$ is $L^q$-integrable, 
  uniformly in $t\in[0,T]$ and $n\geq 0$, 
  as in the proof of Theorem~\ref{integ-1}.
  
  \noindent 
  $ii)$ By point~$i)$ above, the family
  $$
  \{ x_c \}_{c\in {\cal C}_f\cup\{\id\}}
  = 
  \{ \E [ \mathcal{H}_t(\mathcal{B}^{0,c}) ] \}_{c\in {\cal C}_f\cup\{\id\}}
  $$
  solves the ODE system~\eqref{ode-system-a}-\eqref{ode-system-b},
  and by \eqref{eqn-11} and the Cauchy--Schwartz inequality we have 
  \begin{equation}
    \label{eqn-20}
    |x_{\id}(t) - x_0| \leq t^{1-1/q} \left( \int_0^t \left| x_f(s) \right|^q ds \right)^{1/q}
        < \frac{T}{(1-e^{-\lambda T})^{1/(2q)}},
    \qquad t\in[0,T]. 
\end{equation} 
 In addition, by \eqref{functional-2} we have 
\begin{equation*}
  \big| \widetilde{\mathcal{H}}_t^{g,n}
          \big(
    \mathcal{B}^{0,g}
    \big)
    \big|^q \leq ( C_2^q)^{ | \mathcal{K}_t^{n+1} |} \leq (C_2^q)^{\left| \mathcal{K}_t \right|}, 
  \end{equation*}
  where 
  \begin{equation*}
    C_2^q :=
    \max \left(
    C_0^q ,
    \sup_{c\in {\cal C}_f, \ \! t\in [0,T]} |x_c (t)|^q
    ,
    \sup_{c\in{\cal C}_g, \ \! t\in[0,T]} \left| c \left( x_\id(t) \right) \right|^q
    \right)
    <
    \frac{1}{\sqrt{1-e^{-\lambda T}}}, 
  \end{equation*}
  due to 
  \eqref{cond-g-domain-2}
  and
  \eqref{eqn-20}.
  As above, we conclude from \eqref{pgf-ct}. 
\end{proof}
\begin{remark}
  When $g=f$,
  Condition~\eqref{cond-g-domain-2} provides a quantitative
  estimate for the neighborhood $U$ in
  Condition~\hyperlink{asmp-lp}{(C2)}. %
\end{remark}
\section{Integrability - unbounded marks} 
\label{sect4-1}
\noindent
 In this section we consider a weight function
 $\sigma (k)$
 that depends only on the order $k$
 of the gradients %
 $\nabla^k f$, $\nabla^k g$, $k\geq 0$. 
\begin{theorem}[Integrability II] %
   \label{integ-3-2}
 Let $T>0$, $q\ge1$, $\delta>0$, and $\gamma\geq 2$.
 Under Assumption~\ref{asmpt-0}, suppose that
\begin{equation}
\label{eqrho-1} 
 \rho_*(T)
 > \big( 1 - e^{-\lambda T}\big)^{1/(2q)}, 
\end{equation}
 and that 
 \begin{equation}
   \label{C0-sigma-2}
   \sigma(k):= \frac{|\nabla^k f(x_0) |}{\widebar{F}_\rho(T)}\vee \frac{|\nabla^k g(x_0) |}{\widebar{F}_\rho(T)}, \qquad k\geq 0,
\end{equation}
 satisfies 
 \begin{equation}
   \label{upper-unbound-1-2}
      1 < 
      {\sigma(0)}^{2q} < \frac{1}{(1-e^{-\lambda T}) \gamma \delta } 
  \quad
  \mbox{and}
  \quad 
      1 < 
 {\sigma(k)}^{2q} \leq (k-2+\gamma) \delta , \quad k\geq 1. 
\end{equation}
 Then, 
 we have the $L^q$ bound 
\begin{equation*}
  \E \left[ \left| \mathcal{H}_t\left(\mathcal{B}^{0,c}\right) \right|^q \right] \le
  e^{-\lambda t} \sigma(j)^q 
  \frac{
\big( 1-  (1-e^{-\lambda t})
    \sigma(0)^{2q} \gamma \delta \big)^{ - 1/2 - (j-1)/(2\gamma ) }
  }{\big( 1 - (1-e^{-\lambda t}) / \rho_*(T)^{2q} \big)^{1/2}},
  \quad
  c \in \{ \nabla^j f , \nabla^j g \},
\end{equation*}
 for each $j\geq 0$ and $t\in [0,T]$. 
\end{theorem}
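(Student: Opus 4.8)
The plan is to begin with a deterministic pointwise estimate on $|\mathcal{H}_t(\mathcal{B}^{0,c})|^q$, split it through the Cauchy--Schwarz inequality into a ``leaf'' factor and an ``internal-node'' factor, and then control these two factors by Proposition~\ref{radius-conv-weight-ct} and by the progeny generating function \eqref{pgf-ct}, respectively. Writing $c=\nabla^j f$ or $c=\nabla^j g$, the root of $\mathcal{B}^{0,c}$ carries order $j$, and each branch $\mathbf{k}$ carries a mark $\nabla^{m_{\mathbf{k}}}f$ or $\nabla^{m_{\mathbf{k}}}g$ whose order $m_{\mathbf{k}}$ evolves exactly as the integer label of $\widetilde{\mathcal{B}}^{0,j}$: a split sends order $i$ to children of orders $0$ and $i+1$. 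Using $\widebar{F}_\rho(t-T_{\mathbf{k}^-})\geq\widebar{F}_\rho(T)$ for $t\leq T$, the definition \eqref{C0-sigma-2} giving $|c_{\mathbf{k}}(x_0)|\leq\widebar{F}_\rho(T)\sigma(m_{\mathbf{k}})$, and $\rho(t_{\mathbf{k}})\geq\rho_*(T)$, the functional \eqref{functional} satisfies
\begin{equation*}
  \big|\mathcal{H}_t(\mathcal{B}^{0,c})\big|^q \leq \prod_{\mathbf{k}\in\mathcal{K}^\partial_t}\sigma(m_{\mathbf{k}})^q\prod_{\mathbf{k}\in\mathcal{K}^\circ_t}\frac{1}{\rho_*(T)^q}.
\end{equation*}
With $A:=\prod_{\mathbf{k}\in\mathcal{K}^\partial_t}\sigma(m_{\mathbf{k}})^q$ and $B:=\rho_*(T)^{-q|\mathcal{K}^\circ_t|}$, Cauchy--Schwarz gives $\E[|\mathcal{H}_t(\mathcal{B}^{0,c})|^q]\leq(\E[A^2])^{1/2}(\E[B^2])^{1/2}$, which already accounts for the square roots and the halved exponents in the claimed bound.

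For the leaf factor, Condition~\eqref{upper-unbound-1-2} forces $\sigma(k)>1$ for every $k\geq 0$, so each internal factor $\sigma(m_{\mathbf{k}})^{2q}\geq 1$ may be reinserted to enlarge the product to all of $\mathcal{K}_t$:
\begin{equation*}
  \E[A^2]=\E\Big[\prod_{\mathbf{k}\in\mathcal{K}^\partial_t}\sigma(m_{\mathbf{k}})^{2q}\Big]\leq\E\Big[\prod_{\mathbf{k}\in\mathcal{K}_t}\sigma(m_{\mathbf{k}})^{2q}\Big].
\end{equation*}
The right-hand side is the multiplicative weight $\widehat{\sigma}(k):=\sigma(k)^{2q}$ over the whole marked tree; since $\widehat{\sigma}\geq 1$ it is a non-increasing functional of the lifetime vector $(t_{\mathbf{k}})$, so Assumption~\ref{asmpt-0} and the coupling of Lemma~\ref{stoch-domin-1} transfer the expectation onto $\widetilde{\mathcal{B}}^{0,j}$. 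Conditions~\eqref{upper-unbound-1-2} are exactly the hypotheses of Proposition~\ref{radius-conv-weight-ct} for $\widehat{\sigma}$ (using $1-e^{-\lambda t}\leq 1-e^{-\lambda T}$ for $t\leq T$ to propagate the bound on $\widehat{\sigma}(0)$, and $\gamma\geq 2>1$); applying it with starting label $j$ yields
\begin{equation*}
  \E[A^2]\leq\frac{e^{-\lambda t}\sigma(j)^{2q}}{\big(1-(1-e^{-\lambda t})\gamma\delta\,\sigma(0)^{2q}\big)^{1+(j-1)/\gamma}}.
\end{equation*}

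For the internal factor I use that $\mathcal{B}^{0,c}$ is binary when $c\neq\id$, so $|\mathcal{K}^\circ_t|=(N_t-1)/2$ with $N_t=|\mathcal{K}_t|$, whence $B^2=\rho_*(T)^q\big(\rho_*(T)^{-q}\big)^{N_t}$. The scalar domination $N_t\preceq\widetilde{N}_t$ of Lemma~\ref{stoch-domin-1}, together with the generating function \eqref{pgf-ct} evaluated at $z=\rho_*(T)^{-q}$ whose convergence requirement $z<(1-e^{-\lambda t})^{-1/2}$ is guaranteed by \eqref{eqrho-1}, gives
\begin{equation*}
  \E[B^2]\leq\rho_*(T)^q\,G_t\big(\rho_*(T)^{-q}\big)=\frac{e^{-\lambda t}}{1-(1-e^{-\lambda t})/\rho_*(T)^{2q}}.
\end{equation*}
Multiplying $(\E[A^2])^{1/2}$ by $(\E[B^2])^{1/2}$ and combining the two factors $e^{-\lambda t/2}$ into $e^{-\lambda t}$ then reproduces the stated estimate exactly.

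I expect the crux to be the passage from $\mathcal{B}^{0,c}$ to $\widetilde{\mathcal{B}}^{0,j}$ for the two weighted products, which upgrades the scalar domination of Lemma~\ref{stoch-domin-1} to the multiplicative functionals $\prod\widehat{\sigma}$ and $z^{N_t}$; the essential point is to verify that each is monotone in the lifetimes in the direction compatible with Assumption~\ref{asmpt-0}. The leaf factor behaves well precisely because $\sigma>1$ renders $\prod\widehat{\sigma}$ non-increasing in the lifetimes, so the coupling pushes its expectation onto $\widetilde{\mathcal{B}}^{0,j}$ before invoking Proposition~\ref{radius-conv-weight-ct}. A minor bookkeeping issue is that Proposition~\ref{radius-conv-weight-ct} is stated with strict inequalities whereas \eqref{upper-unbound-1-2} permits equality in $\sigma(k)^{2q}\leq(k-2+\gamma)\delta$; this is absorbed by monotonicity of the right-hand bound in $\widehat{\sigma}(k)$ (or a trivial limiting argument), so the estimate persists.
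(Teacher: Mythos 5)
Your proof is correct and follows essentially the same route as the paper's: the same pointwise bound via $|\mathcal K^\circ_t|=(|\mathcal K_t|-1)/2$ and the enlargement of the leaf product to all of $\mathcal{K}_t$ using $\sigma>1$, the same Cauchy--Schwarz split, the same transfer by stochastic domination (as in Lemma~\ref{stoch-domin-1}) onto the marked exponential tree $\widetilde{\mathcal{B}}^{0,j}$, and then \eqref{pgf-ct} for the progeny factor and Proposition~\ref{radius-conv-weight-ct} with weights $\sigma(\cdot)^{2q}$ for the mark factor. Your closing observation about the strict inequality in Proposition~\ref{radius-conv-weight-ct} versus the non-strict bound in \eqref{upper-unbound-1-2} is a detail the paper passes over silently, and your monotonicity/limiting fix handles it correctly.
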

\begin{proof}
    Since $c \ne \id$ we have $|\mathcal K_t^\circ| = (|\mathcal K_t|-1)/2$,
    hence from \eqref{functional} we have
\begin{equation*}
\left|\mathcal{H}_t \left(\mathcal B^{0,c}\right)\right| \le \rho_*(T)^{-|\mathcal K_t^\circ|} \prod_{\mathbf k \in \mathcal{K}_t^\pt} \sigma(c^\mathbf k) \le \rho_*(T)^{-(|\mathcal K_t|-1)/2} \prod_{\mathbf k \in \mathcal{K}_t} \sigma(c^\mathbf k).
\end{equation*}
Thus, 
 \begin{align*}
  \E \left[ \left|\mathcal{H}_t\left(\mathcal{B}^{0,c}\right)\right|^q \right]
  & \leq \E \left[
    \rho_*(T)^{
    -(|\mathcal{K}_t|-1)/2
      }
      \prod_{\mathbf k \in \mathcal{K}_t} \sigma(c_{\mathbf{k}} )
  \right]
  \\ 
  & \leq \E \left[
    \rho_*(T)^{ -( |\mathcal{K}_t|-1 ) q/ 2} \left(
    \prod_{\mathbf k \in \mathcal{K}_t} \sigma(c_{\mathbf{k}})
    \right)^q \right]
  \\
  & \leq
  \big(
  \rho_*(T)^q \E \big[ \rho_*(T)^{-q|\mathcal{K}_t|} \big]
  \big)^{1/2}
  \left( \E \left[
    \prod_{\mathbf k \in \mathcal{K}_t} \sigma(c_{\mathbf{k}})^{2q}
     \right] \right)^{1/2}. 
\end{align*}
 For the same reason as in Lemma~\ref{stoch-domin-1},
  we deduce that $\prod_{\mathbf k \in \mathcal{K}_t} \sigma(c^\mathbf k)$ is stochastically dominated by $\prod_{\mathbf k \in \widetilde{\mathcal{K}_t}} \sigma(\tilde{c}_\mathbf k)$.
  It then follows that
  $$\E [|H_t(B^{0,c})|^q] \le
  \big(\rho_*(T)^q E\big[\rho_*(T)^{-q \widetilde{N}_t}\big]\big)^{1/2}
  \left(
  \E\left[\prod_{\mathbf k \in \widetilde{\mathcal{K}_t}} \sigma(\tilde{c}_\mathbf k)^{2q}\right]
  \right)^{1/2}, \quad t\in [0,T], 
$$
  and we conclude from \eqref{pgf-ct} and
  Proposition~\ref{radius-conv-weight-ct}. 
\end{proof}
\noindent
 An example of a nonlinearity satisfying the growth condition
  \eqref{upper-unbound-1-2}
  is given by $f(x) = x \cos x$. 
\begin{theorem}[Uniform integrability II] %
\label{unf-integ-3-2}
 Let $T>0$, $q\ge1$, $\delta>0$, $\gamma\geq 2$,
 and $g \in {\cal C} \setminus \{ \id \}$. 
 Under Assumption~\ref{asmpt-0}, %
 suppose that \eqref{eqrho-1} holds
 and that the weight function
 $\sigma$ in \eqref{C0-sigma-2}
 satisfies %
 \begin{equation}
   \nonumber %
      1 < {\sigma(0)}^q < \frac{1}{(1-e^{-\lambda T})\gamma \delta }
\end{equation}
 and 
\begin{equation}\label{upper-unbound-unf-dc-2}
  1 < \sigma(k)^q < %
 e^{\lambda T} \sqrt{\frac{
  1 - \left(1-e^{-\lambda T}\right) / \rho_*(T)^{2q} 
  }{
  1-e^{-\lambda T}
}}
  \big( 1- (1-e^{-\lambda T}) 
  \sigma(0)^q \gamma \delta \big)^{1/2 + (k-1)/(2\gamma )},
  \quad
  k\geq 1. 
\end{equation}
Then, 
$\mathcal{H}_t^{g,n}$ defined in \eqref{functional-1} is $L^q$-integrable, uniformly in $t\in[0,T]$ and $n\geq 0$.
Moreover,
\begin{enumerate}[i)] 
\item 
 if $q>1$, then $\big( \mathcal{H}_t^{g,n} \big)_{(n,t)\in \inte \times \real_+}$ is uniformly integrable; 
\item 
 if 
\begin{equation}
  \label{cond-g-domain-3}
  \sup_{|x-x_0| < T / (1-e^{-\lambda T})^{1/(2q)}} \left| \nabla^m g(x) \right|
  < \frac{1}{(1-e^{-\lambda T})^{1/(2q)}},
  \quad
  \mbox{for all } m\geq 0, 
\end{equation} 
 then $\widetilde{\mathcal{H}}_t^{g,n}$ defined in \eqref{functional-2} is $L^q$-integrable, uniformly in
 $t\in[0,T]$ and $n\geq 0$. 
\end{enumerate}
\end{theorem}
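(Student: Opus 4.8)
The plan is to follow the architecture of the proof of Theorem~\ref{unf-integ-1}, replacing the single uniform constant $C_0$ there by the order-dependent weight machinery of Theorem~\ref{integ-3-2} and Proposition~\ref{radius-conv-weight-ct}. First I would verify that the hypotheses \eqref{eqrho-1} and the stated bounds on $\sigma(k)$ imply the assumptions of Theorem~\ref{integ-3-2}, so that $\mathcal{H}_t(\mathcal{B}^{0,c})$ is $L^q$-integrable for every $t\in[0,T]$ and $c\in{\cal C}_f\cup{\cal C}_g$. By Lemma~\ref{lemma-uc} the functions $x_c(t):=\E[\mathcal{H}_t(\mathcal{B}^{0,c})]$ are then well defined and solve the ODE system~\eqref{ode-system-g}, providing the solution family required to invoke Proposition~\ref{unf-intg}.

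Next I would control the deepest factors of $\mathcal{H}_t^{g,n}$, which coincides with $\mathcal{H}_t(\mathcal{B}^{0,g})$ except that the branches $\mathbf k\in\mathcal{K}^{n+1}_t$ contribute the values $x_{c_{\mathbf k}}(T_{\mathbf k^-})$ in place of boundary weights. For $c=\nabla^k f$ or $\nabla^k g$, Jensen's inequality gives $|x_{\nabla^k c}(t)|^q\le\E[|\mathcal{H}_t(\mathcal{B}^{0,\nabla^k c})|^q]$, and the right-hand side is the explicit expression of Theorem~\ref{integ-3-2}, which decays in $k$ at the rate set by the exponent $-1/2-(k-1)/(2\gamma)$. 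The role of condition \eqref{upper-unbound-unf-dc-2} is precisely to dominate this decay: its growth exponent $1/2+(k-1)/(2\gamma)$ is the reciprocal of the one produced by Theorem~\ref{integ-3-2}, so that the merged order-dependent weight
$$
w(k):=\max\Big(\sigma(k),\ \sup_{t\in[0,T]}|x_{\nabla^k c}(t)|\Big)
$$
still obeys the admissibility constraints $w(0)<((1-e^{-\lambda T})\gamma\delta)^{-1/(2q)}$ and $w(k)^{2q}\le(k-2+\gamma)\delta$ demanded by Proposition~\ref{radius-conv-weight-ct}.

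With $w\ge1$ in hand I would then bound $|\mathcal{H}_t^{g,n}|$, uniformly in $n$, by the weighted progeny $\rho_*(T)^{-|\mathcal{K}^\circ_t|}\prod_{\mathbf k\in\mathcal{K}_t}w(c_{\mathbf k})$, exactly as in the proof of Theorem~\ref{integ-3-2} (using $\widebar{F}_\rho(t-T_{\mathbf k^-})\ge\widebar{F}_\rho(T)$ and $w\ge1$ to absorb both the boundary marks and the generation-$(n+1)$ marks into $w$). A Cauchy--Schwarz split then isolates the interior factors $1/\rho_*(T)$ from the $w$-product: the former is finite by the stochastic domination of Lemma~\ref{stoch-domin-1} together with the generating function \eqref{pgf-ct} under \eqref{eqrho-1}, and the latter by Proposition~\ref{radius-conv-weight-ct} applied to the weight $w^{2q}$. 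Since the resulting bound is independent of $n$ and uniform in $t\in[0,T]$, this proves the uniform $L^q$-integrability of $\mathcal{H}_t^{g,n}$; assertion~i) is then immediate, as boundedness in $L^q$ for $q>1$ entails uniform integrability. For ii) I would argue as in Theorem~\ref{unf-integ-1}-ii): the $L^q$ control of $x_f$ and Cauchy--Schwarz give $|x_\id(t)-x_0|<T(1-e^{-\lambda T})^{-1/(2q)}$, so $x_\id(t)$ remains in the ball on which \eqref{cond-g-domain-3} forces $|\nabla^m g(x_\id(t))|<(1-e^{-\lambda T})^{-1/(2q)}$; this bounds the $g$-marked generation-$(n+1)$ factors $c_{\mathbf k}(x_\id(T_{\mathbf k^-}))$ in $\widetilde{\mathcal{H}}_t^{g,n}$, the $f$-marked ones being bounded as above, and the identical Cauchy--Schwarz/generating-function/Proposition~\ref{radius-conv-weight-ct} argument yields the uniform $L^q$ bound for $\widetilde{\mathcal{H}}_t^{g,n}$.

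The main obstacle is this calibration step: confirming that the growth allowance \eqref{upper-unbound-unf-dc-2} for $\sigma$ exactly absorbs the decay rate of the Theorem~\ref{integ-3-2} bound on $x_{\nabla^k c}$, with the correct bookkeeping of the exponents $q$ versus $2q$ introduced by the Cauchy--Schwarz step, so that the merged weight $w$ remains admissible for Proposition~\ref{radius-conv-weight-ct}. Once this is secured, the rest is a faithful transcription of the bounded-mark argument of Theorem~\ref{unf-integ-1}.
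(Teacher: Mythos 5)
Your setup tracks the paper's proof up to and including the decisive calibration step: \eqref{upper-unbound-unf-dc-2} implies \eqref{upper-unbound-1-2}, so Theorem~\ref{integ-3-2} applies; Lemma~\ref{lemma-uc} produces the solution family $x_c=\E[\mathcal{H}_t(\mathcal{B}^{0,c})]$; and combining Jensen with the explicit bound of Theorem~\ref{integ-3-2} under \eqref{upper-unbound-unf-dc-2} gives the $k$-independent bound $\sup_{t\in[0,T]}|x_c(t)|^q<(1-e^{-\lambda T})^{-1/2}$ for $c\in\{\nabla^k f,\nabla^k g\}$ --- this cancellation of the exponents $\pm(1/2+(k-1)/(2\gamma))$ is exactly the paper's computation. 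The gap is in your concluding mechanism. The merged weight $w(k):=\max\big(\sigma(k),\sup_t|x_{\nabla^k c}(t)|\big)$ is \emph{not} admissible for Proposition~\ref{radius-conv-weight-ct}, and under the theorem's own hypotheses it cannot be made admissible. Admissibility of $w^{2q}$ requires $w(k)^{2q}\le(k-2+\gamma)\delta$ for all $k\ge1$, but the only estimate the calibration yields on the frontier factors is the constant $\sup_t|x_{\nabla^k c}(t)|^{2q}<(1-e^{-\lambda T})^{-1}$. Since $1<\sigma(0)^q<\big((1-e^{-\lambda T})\gamma\delta\big)^{-1}$ forces $(1-e^{-\lambda T})\gamma\delta<1$, one has $(\gamma-1)\delta<\gamma\delta<(1-e^{-\lambda T})^{-1}$, so the constraint already fails at $k=1$. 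Re-tuning $(\gamma,\delta)$ to some $(\gamma',\delta')$ cannot repair this: the mark $0$ occurs on the frontier (every first child carries the mark $f$), and $w(0)^{2q}$ is likewise only bounded by $(1-e^{-\lambda T})^{-1}$, so the constraint $w(0)^{2q}(1-e^{-\lambda T})\gamma'\delta'<1$ forces $\gamma'\delta'<1$, which is incompatible with the requirement $(\gamma'-1)\delta'\ge(1-e^{-\lambda T})^{-1}>1$ at $k=1$.

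The paper avoids any second application of Proposition~\ref{radius-conv-weight-ct}: once the calibration is done, the problem collapses to the bounded-mark situation of Theorem~\ref{unf-integ-1}. Every individual factor of $\mathcal{H}_t^{g,n}$ (resp.\ $\widetilde{\mathcal{H}}_t^{g,n}$) has $q$-th power strictly below $(1-e^{-\lambda T})^{-1/2}$: the interior factors because $\rho_*(T)^{-q}<(1-e^{-\lambda T})^{-1/2}$ by \eqref{eqrho-1}, the frontier factors by the calibrated bound on $|x_c(t)|^q$, and, for part~ii), the factors $|\nabla^m g(x_\id(T_{\mathbf{k}^-}))|^q$ by \eqref{cond-g-domain-3} together with the estimate \eqref{eqn-20} on $|x_\id(t)-x_0|$, which follows as you say from the $L^q$ control of $x_f$; the boundary factors are absorbed into the same maximum $C_2^q$. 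This yields $\big|\mathcal{H}_t^{g,n}\big|^q\le (C_2^q)^{|\mathcal{K}_t|}$ for a single constant with $C_2^q<(1-e^{-\lambda T})^{-1/2}$, uniformly in $n$ and $t\in[0,T]$, and one concludes with the stochastic domination of Lemma~\ref{stoch-domin-1} and the plain progeny generating function \eqref{pgf-ct}, whose radius of convergence is precisely $(1-e^{-\lambda t})^{-1/2}$. In short, the mark-dependent machinery of Proposition~\ref{radius-conv-weight-ct} is needed only inside Theorem~\ref{integ-3-2}; at the uniform-integrability stage your argument should replace the merged-weight step by this single-constant generating-function bound, after which the remainder of your proposal (part~i) via $L^q$-boundedness for $q>1$, part~ii) as in Theorem~\ref{unf-integ-1}-ii)) goes through as written.
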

\begin{proof}
$i)$ We note that Condition~\eqref{upper-unbound-unf-dc-2} implies \eqref{upper-unbound-1-2}, hence $(i)$ follows from Theorem~\ref{integ-3-2}. 

  \noindent
  $ii)$ Next, Lemma~\ref{lemma-uc} %
 and
 Theorem~\ref{integ-3-2} imply that
the family $(x_c = \E [ \mathcal{H}_t(\mathcal{B}^{0,c}) ]: c\in {\cal C}_f\cup\{\id\})$ solves the ODE system~\eqref{ode-system-a}-\eqref{ode-system-b}, 
 and 
  the family $(x_c = \E [ \mathcal{H}_t(\mathcal{B}^{0,c}) ]: c\in {\cal C}_f\cup{\cal C}_g)$ solves the ODE system~\eqref{ode-system-g}. 
  Moreover, by Theorem~\ref{integ-3-2}, for $c \in \{\nabla^k f ,
  \nabla^k g \}$, $k\geq 0$, and all $t\in[0,T]$,
  we have 
$$ 
    |x_c(t)|^q \leq \E \left[ \left| \mathcal{H}_t\left( \mathcal{B}^{0,c} \right) \right|^q \right]
  \leq %
    e^{-\lambda T} \sigma(k)^q
    \frac{
    \big( 1-
    (1-e^{-\lambda T}) 
    \sigma(0)^q \gamma \delta
    \big)^{
      - 1/2
      - (k-1)/(2\gamma )    
}}{{\big( 1 - \left(1-e^{-\lambda T}\right)
      / \rho_*(T)^{2q} \big)^{1/2}
    }}, %
$$ 
 hence it follows from \eqref{upper-unbound-unf-dc-2} that
  \begin{equation*}
    |x_c(t)|^q < \left(1-e^{-\lambda T}\right)^{-1 / 2}, \quad
    c \in \{ \nabla^k f, \nabla^k g \}, \quad k\geq 0, 
  \end{equation*}
  hence \eqref{eqn-20} and 
  \eqref{cond-g-domain-3} hold.
 Hence, as in the proof of Theorem~\ref{integ-3-2} 
 we have 
\begin{equation*}
   \big| \widetilde{\mathcal{H}}_t^{g,n} \big|^q \leq ( C_2^q)^{ | \mathcal{K}_t^{n+1} |} \leq (C_2^q)^{\left| \mathcal{K}_t \right|}, 
  \end{equation*}
 with 
  \begin{align*}
    C_2^q & :=
    \max \left(
    \sup_{k\geq 1} \sigma(k)^q
    ,
    \sup_{c \in \{ \nabla^k f, \nabla^k g \}, \ \! t\in [0,T]} |x_c(t)|^q
    ,
    \sup_{c = \nabla^k g, \ \! t\in[0,T]} \left| c \left( x_\id(t) \right) \right|^q
    \right)
    \\
     & <
    (1-e^{-\lambda T})^{-1/2}, 
  \end{align*}
 and we conclude from \eqref{pgf-ct}. 
\end{proof}

\begin{remark}
It can be verified that an explicit and sufficient condition for \eqref{upper-unbound-unf-dc-2} to hold is
\begin{align*}
 &    1 < 
 \sigma(0)^q \leq \frac{\left(1-e^{-\lambda T}\right)^{-1/2} e^{\lambda T}
      (1 - \left(1-e^{-\lambda T}\right) / \rho_*(T)^{2q} )^{1/2}}{1 + (1-e^{-\lambda T})^{1/2} \gamma \delta
      e^{\lambda T} \big( 1 - (1-e^{-\lambda T}) / \rho_*(T)^{2q} \big)^{1/2}},
\\
 &   1 < 
  \sigma(k)^q \leq \frac{\left(1-e^{-\lambda T}\right)^{-1/2} e^{\lambda T}
  ( 1 - \left(1-e^{-\lambda T}\right) / \rho_*(T)^{2q} )^{1/2}}{\big(1 +
  (1-e^{-\lambda T})^{1/2} e^{\lambda T} \gamma \delta
  ( 1 - \left(1-e^{-\lambda T}\right) / \rho_*(T)^{2q} )^{1/2}\big)^{
    - 1/2
    - (k-1)/(2\gamma )
}}, \quad k\geq 1.
\end{align*}
\end{remark}

\section{Examples} %
\label{sect_ex}
\noindent
In this section we construct
an example of probability density function 
satisfying Assumption~\ref{asmpt-0}. 
 In what follows, we let 
 \begin{equation*}
   C_1(q;T) := \frac{1}{(1- e^{- 2 \lambda T} )^{1 /(2q)}} 
   \mbox{ ~and~ } 
  C_2(q;T) := 
   \frac{( \sqrt{4 + e^{-2 \lambda T}} - e^{-\lambda T} )^{1/q}}{
     2^{1 / q} (1- e^{-\lambda T} )^{1 / (2q)}}. 
\end{equation*}
 The inequality \eqref{asmpt-1-00}, %
 together with
 Conditions~\eqref{eqrho} and \eqref{eqrho2}, imply 
\begin{equation*}
  1 = \widebar{F}_\rho(0) = \int_0^T \rho(r) dr + \widebar{F}_\rho(T)
  > \frac{T}{C_i (q;T)} + e^{-\lambda T} , 
\end{equation*}
hence
$$
 T < ( 1 - e^{-\lambda T} ) C_i(q;T),
\quad i=1,2.
$$ 
 Given that
 for any $q\ge 1$
 the functions $(0,\infty ) \ni T \mapsto (1 - e^{-\lambda T}) C_i(q;T)$,
 $i=1,2$, take values in the whole interval $(0,1)$,
 we have the following. 
 \begin{lemma}
   \label{rho-ext-1}
  Let $T>0$, $q\ge 1$, and $i=1$, resp. $i=2$.
  If $T<1$, then there exists a probability density function $\rho: [0,\infty) \to \R$ satisfying
    Assumption~\ref{asmpt-0} and 
 $1 / \rho_*(T) < C_i(q;T)$ for $i=1$, resp. $i=2$. 
\end{lemma}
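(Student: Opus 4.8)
The plan is to exhibit $\rho$ explicitly, after first choosing a suitable rate $\lambda>0$, by gluing a constant density on $[0,T]$ to an exponential tail on $(T,\infty)$; the case $i=1$ and the case $i=2$ are handled simultaneously, since the argument uses only that $(1-e^{-\lambda T})C_i(q;T)$ ranges over $(0,1)$.

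First I would fix $\lambda$. As recalled just before the statement, for each fixed $T$ and $q\ge 1$ the quantity $(1-e^{-\lambda T})C_i(q;T)$ depends on $\lambda$ only through $\lambda T$, is continuous in $\lambda\in(0,\infty)$, and sweeps out the whole interval $(0,1)$ (it tends to $0$ as $\lambda\to 0^+$ and to $1$ as $\lambda\to\infty$). Since $T\in(0,1)$, I may pick $\lambda>0$ with $T<(1-e^{-\lambda T})C_i(q;T)$. Writing $a:=(1-e^{-\lambda T})/T$, this choice is precisely the statement $1/a=T/(1-e^{-\lambda T})<C_i(q;T)$.

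Next I would define
$$\rho(s):=\frac{1-e^{-\lambda T}}{T}\,\ind_{[0,T]}(s)+\lambda e^{-\lambda s}\,\ind_{(T,\infty)}(s),\qquad s\ge 0,$$
and check that it is an admissible lifetime density: $\int_0^\infty\rho=(1-e^{-\lambda T})+e^{-\lambda T}=1$, and $\rho>0$ on $[0,T]$ with $\rho\equiv a$ there, so $\rho_*(T)=a$ and hence $1/\rho_*(T)<C_i(q;T)$ as required. It then remains to verify Assumption~\ref{asmpt-0}, namely $\widebar{F}_\rho(r)\ge e^{-\lambda r}$ for all $r\ge 0$. For $r>T$ the tail integrates to $\widebar{F}_\rho(r)=\int_r^\infty\lambda e^{-\lambda s}\,ds=e^{-\lambda r}$, so the bound holds with equality. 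For $r\in[0,T]$ one has $\widebar{F}_\rho(r)=1-ar$, and the desired $1-ar\ge e^{-\lambda r}$ is equivalent to $(1-e^{-\lambda r})/r\ge a=(1-e^{-\lambda T})/T$.

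The crux is this last inequality on $[0,T]$, i.e.\ confirming that the constant density there does not decay faster than the exponential benchmark. I would settle it via the monotonicity of $h(r):=(1-e^{-\lambda r})/r$: its derivative has numerator $e^{-\lambda r}(1+\lambda r)-1\le 0$ (since $e^{\lambda r}\ge 1+\lambda r$), so $h$ is nonincreasing on $(0,\infty)$ and thus $h(r)\ge h(T)=a$ for every $r\in(0,T]$, which is exactly what is needed. The only other point demanding care is the selection of $\lambda$, which relies on $T<1$ together with the fact that $(1-e^{-\lambda T})C_i(q;T)$ can be driven arbitrarily close to $1$; everything else is a routine verification.
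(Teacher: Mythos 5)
Your proof is correct, and it takes a genuinely streamlined route compared with the paper's. The paper argues by cases: when $C_i(q;T) > e^{\lambda T}/\lambda$ it takes $\rho$ purely exponential, and otherwise it posits a piecewise density (constant $1/(C_i(q;T)-\varepsilon)$ on $[0,T]$, tail $\lambda_2 e^{-\lambda_1 t}$ with free parameters $\lambda_1,\lambda_2$), then shows that normalization together with Assumption~\ref{asmpt-0} is equivalent to the three conditions \eqref{eqn-16}, the first inequality of \eqref{eqn-17}, and \eqref{eqn-19}, and finally invokes the range of $(1-e^{-\lambda T})C_i(q;T)$ to choose $\varepsilon$ and $\lambda$. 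You avoid the case split entirely by committing at once to the constant value $(1-e^{-\lambda T})/T$ on $[0,T]$, i.e.\ the unique constant compatible with the exact exponential tail $\lambda e^{-\lambda s}\ind_{(T,\infty)}(s)$: normalization then holds automatically, $\widebar{F}_\rho(r)=e^{-\lambda r}$ with equality for $r\ge T$, and the domination on $[0,T]$ reduces to the monotone decrease of $r\mapsto (1-e^{-\lambda r})/r$, which you correctly verify via $e^{\lambda r}\ge 1+\lambda r$. Your density is in fact an instance of the paper's second-case family (take $\lambda_1=\lambda_2=\lambda$ and $\varepsilon = C_i(q;T)-T/(1-e^{-\lambda T})>0$, which is positive by your choice of $\lambda$), but your specific instantiation eliminates the free parameters and the equivalence bookkeeping, at the mild cost of checking one elementary monotonicity fact directly; it also subsumes the paper's first case, since your single condition $T<(1-e^{-\lambda T})C_i(q;T)$ is all that is used. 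Both arguments lean on the same observation, asserted without proof in the paper, that $(1-e^{-\lambda T})C_i(q;T)$ sweeps out $(0,1)$; your version at least records the relevant limits (tending to $0$ as $\lambda\to 0^+$ and to $1$ as $\lambda\to\infty$), which one confirms by writing the product in terms of $u=e^{-\lambda T}$, e.g.\ $(1-e^{-\lambda T})C_1(q;T)=(1-u)^{1-1/(2q)}(1+u)^{-1/(2q)}$, so together with $T<1$ the required $\lambda$ exists by continuity.
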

\begin{proof}
\noindent 
 $i)$ Case 1: $C_i(q;T)
 > e^{\lambda T} / \lambda$.
 In this case, we let
 $$
 \rho(t) := \lambda e^{-\lambda t},
 \qquad
 t\geq 0,
 $$
 for some $\lambda > 0$. Then Assumption \ref{asmpt-0}
 trivially holds, and $1 / \rho_*(T) = e^{\lambda T} / {\lambda} <
 C_i(q;T)$. 

 \noindent 
 $ii)$ Case 2:
 $C_i(q;T) \le e^{\lambda T}/\lambda$. In this case, we let 
  \begin{equation*}
    \rho(t) :=
    \begin{cases}
      \displaystyle \frac{1}{C_i(q;T) - \e}, & 0\le t\le T,
      \medskip
      \\
      \lambda_2 e^{-\lambda_1 t}, & t>T, 
    \end{cases}
  \end{equation*}
  for some $\lambda, \lambda_1, \lambda_2 > 0$ and $\e>0$. Then the condition $1 / \rho_*(T) < C_i(q;T)$ trivially holds.
   Since $\rho$ is a probability density, we have 
  \begin{equation}\label{eqn-16}
    1 = \int_0^T \rho(t) dt + \int_T^\infty \rho(t) dt = \frac{T}{C_i(q;T) - \e} + \frac{\lambda_2}{\lambda_1} e^{-\lambda_1 T}, 
  \end{equation}
  hence
  $$
 \frac{\lambda_2}{\lambda_1} e^{-\lambda_1 T} = 1 - \frac{T}{C_i(q;T) - \e}.
  $$ 
  Since
  \begin{equation*}
    \widebar{F}_\rho(t) = 
    \begin{cases}
      \displaystyle
      \frac{T-t}{C_i(q;T) - \e}
      + \frac{\lambda_2}{\lambda_1} e^{-\lambda_1 T}, & 0\le t\le T,
      \medskip
      \\
      \displaystyle
      \frac{\lambda_2}{\lambda_1} e^{-\lambda_1 t}, & t>T, 
    \end{cases}
  \end{equation*}
  Assumption \ref{asmpt-0} amounts to 
\begin{equation} 
  \lambda_1 \le \lambda, \quad \frac{\lambda_2}{\lambda_1} e^{-\lambda_1 T} \ge e^{-\lambda T}, \label{eqn-17}
\end{equation}
and
\begin{equation} 
    \frac{T-t}{C_i(q;T) - \e}  + \frac{\lambda_2}{\lambda_1} e^{-\lambda_1 T} \ge e^{-\lambda t}, \qquad 0\le t\le T. \label{eqn-18}
  \end{equation}
  By applying \eqref{eqn-16} and the second inequality of \eqref{eqn-17}, we get
  \begin{equation}\label{eqn-19}
    T \le \left( 1 - e^{-\lambda T} \right) \left( C_i(q;T) - \e \right).
  \end{equation}
  By applying \eqref{eqn-16} and \eqref{eqn-18}, we get
  \begin{equation*}
    \frac{t}{1-e^{-\lambda t}} \le C_i(q;T) - \e, \quad \text{for all } 0\le t\le T,
  \end{equation*}
  which amounts again to \eqref{eqn-19}. In summary, the restriction that $\rho$ is a probability density and Assumption \ref{asmpt-0} together are equivalent to the three conditions: \eqref{eqn-16}, the first inequality of \eqref{eqn-17}, and \eqref{eqn-19}. Since $(1 - e^{-\lambda T} ) C_i(q;T)$, $i=1,2$ take values in the whole interval $(0,1)$, and recalling the assumption $T<1$, we deduce that there exist $\e>0$ and $\lambda>0$ such that \eqref{eqn-19} holds. Finally, one can choose appropriate $\lambda_1, \lambda_2 > 0$ satisfying the identity \eqref{eqn-16}. These define a probability density $\rho$ satisfying Assumption \ref{asmpt-0} and $1 / \rho_*(T) < C_i(q;T)$, as required.
\end{proof}
\noindent 
\begin{remark}
  \label{rmk-exp}
  \begin{enumerate}[i)]
\item 
  Note that in the first case of the above
  proof, i.e. $C_i(q;T) >
  e^{\lambda T} / \lambda$, we have
\begin{equation}
  \label{fhklf111}
  T <
    \lambda T e^{-\lambda T} C_i(q;T), 
\end{equation}
 and the suprema of the above right hand sides
 belong to $(1/e,1/2)$ for $i=1,2$. 
 Hence, when $T\in (0,1/e)$ 
 there exists $\lambda > 0$ such that
 \eqref{fhklf111} holds,
 and one can thereby choose $\rho$ as 
 the exponential density 
 $$
 \rho(t) := \lambda e^{-\lambda t},
 \qquad
 t\geq 0. 
$$
 In this case, conditions \eqref{eqrho}
 and \eqref{eqrho2} reduce respectively to
 $$
 \max
 \left(\sup_{m\geq 0} |\nabla^m f (x_0)| ,
 \sup_{m\geq 0} |\nabla^m g (x_0)| \right)
 < e^{-\lambda T} C_i(q; T),
 \quad i=1,2,
 $$
 where the above right hand sides are both increasing in
 $e^{-\lambda T} \in (0,1)$. 
 Thus, if $T$ is fixed in $(0,1/e)$
 then a smaller $\lambda>0$ yields looser 
 constraints on the nonlinearity,
 while if $\lambda >0$ is fixed,
 then a larger $T\in (0,1/e)$
 requires stricter constraints on the nonlinearity.
\item
 As seen in Remark~\ref{lemma-rescaling}, the
      upper bounds on $T$ can be adjusted by a rescaling.
      In this case,
      the condition on the nonlinearity $f$ will be
      adjusted accordingly. 
\end{enumerate}
\end{remark}
\begin{corollary}[Existence intervals]
Let $q>1$. Let $\lambda_0>0$ be the smallest solution of 
\begin{equation}\label{eqn-lambda}
  \lambda_0 e^{-\lambda_0/e} C_2 (e^{-\lambda_0/e}; q) = 1.
\end{equation}
 Under the conditions 
\begin{equation}\label{eqn-26}
 T < \frac{1}{e \lambda_0 |\nabla^m f (x_0)|}, \quad \mbox{for all } m \geq 0, 
\end{equation}
 and 
\begin{equation}\label{eqn-27}
  \sup_{|x-x_0| < T / (1-e^{-\lambda_0 T})^{{1}/{(2q)}}} \left| \nabla^m f(x) \right| < \left(1-e^{-\lambda_0 T}\right)^{-{1}/{(2q)}}, \quad
   \mbox{for all } m\geq 0, 
\end{equation}
then the ODE \eqref{ODE} has a classical solution $x\in C^1([0,T],\R^d)$ which
 admits the probabilistic representation \eqref{prob-rep-branching}.
\end{corollary}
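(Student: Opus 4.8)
The plan is to obtain the statement from the existence mechanism of Theorem~\ref{tjklf-3-2}, fed by the integrability bounds of Theorems~\ref{integ-1} and \ref{unf-integ-1} (exactly as in the existence half of Corollary~\ref{t1}), after specializing the free lifetime density $\rho$ to the exponential density $\rho(t)=\lambda_0e^{-\lambda_0 t}$ and taking $g=f$. For this choice Assumption~\ref{asmpt-0} holds with equality, and $\widebar{F}_\rho(T)=e^{-\lambda_0 T}$, $\rho_*(T)=\lambda_0e^{-\lambda_0 T}$, so that $1/\rho_*(T)=e^{\lambda_0 T}/\lambda_0$. The whole problem then reduces to checking that the hypotheses \eqref{eqrho}, \eqref{eqrho2} and \eqref{cond-g-domain-2} are in force under \eqref{eqn-26}--\eqref{eqn-27}.

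First I would invoke Remark~\ref{rmk-exp}: for the exponential density, and once the lifetime constraint \eqref{fhklf111} is secured, conditions \eqref{eqrho} and \eqref{eqrho2} collapse (with $g=f$ and $i=2$) to
\[
  \sup_{m\geq 0}|\nabla^m f(x_0)|<e^{-\lambda_0 T}C_2(q;T),\qquad \frac{1}{\rho_*(T)}<C_2(q;T).
\]
The second inequality is precisely \eqref{fhklf111}, namely $T<\lambda_0 Te^{-\lambda_0 T}C_2(q;T)$, and the domain hypothesis \eqref{cond-g-domain-2} with $g=f$ is literally \eqref{eqn-27}, so that requirement is already granted.

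The role of the rate $\lambda_0$ is then to make the lifetime constraint as generous as possible. Writing $s=\lambda_0 T$ and noting that $C_2(q;T)$ depends on $T$ only through $e^{-\lambda_0 T}=e^{-s}$, the constraint reads $\Theta(s):=se^{-s}C_2(e^{-s};q)>T$; since $s\mapsto se^{-s}$ peaks at $s=1$ with value $1/e$, this forces the existence window to close near $T=1/e$. Equation \eqref{eqn-lambda} pins $\lambda_0$ to the rate at which $T\mapsto\lambda_0e^{-\lambda_0 T}C_2(q;T)=\Theta(\lambda_0 T)/T$ equals $1$ exactly at $T=1/e$; the monotonicity of this map in $T$ then yields $1/\rho_*(T)<C_2(q;T)$ on the whole interval, while the clean bound \eqref{eqn-26} on the base-point derivatives is designed to deliver the first displayed inequality at this rate.

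Once all three hypotheses are verified, Theorem~\ref{tjklf-3-2} (through Lemma~\ref{lemma-uc} and Proposition~\ref{unf-intg}) produces a solution $x\in{\cal C}^1([0,T],\R^d)$ of \eqref{ODE} together with the representation \eqref{prob-rep-branching}. I expect the main obstacle to be the one-variable calculus of the previous paragraph: establishing the monotonicity of $T\mapsto\lambda_0e^{-\lambda_0 T}C_2(q;T)$ and the location of the maximum of $s\mapsto se^{-s}C_2(e^{-s};q)$ relative to the value $1/e$, and then checking that the root $\lambda_0$ singled out by \eqref{eqn-lambda} makes \eqref{eqn-26} and the window $T<1/e$ simultaneously sufficient for the two-sided constraint on $C_0$ carried by \eqref{eqrho}--\eqref{eqrho2}. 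The delicate point is balancing the bound on $\sup_{m}|\nabla^m f(x_0)|$ against the lifetime-density bound $1/\rho_*(T)$ at the optimal rate, which is what dictates the precise constant in \eqref{eqn-26}.
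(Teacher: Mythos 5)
There is a genuine gap: you have dropped the rescaling step (Remark~\ref{lemma-rescaling}), which is the engine of the paper's proof, and without it your plan cannot reach the statement. Working unrescaled with the exponential density at rate $\lambda_0$ confines you, as you yourself observe, to a window that ``closes near $T=1/e$''; but the corollary imposes no such cap: condition \eqref{eqn-26} allows $T$ arbitrarily large when $\sup_{m\geq 0}|\nabla^m f(x_0)|$ is small, and for such $T$ the unrescaled hypotheses fail outright, since $1/\rho_*(T)=e^{\lambda_0 T}/\lambda_0$ grows exponentially while $C_2(q;T)$ stays bounded. The mismatch is equally fatal for small $T$: \eqref{eqn-26} permits $\sup_{m\geq 0}|\nabla^m f(x_0)|$ of order $1/T$, whereas the unrescaled Condition~\eqref{eqrho2} demands $\sup_{m\geq 0}|\nabla^m f(x_0)|<e^{-\lambda_0 T}C_2(q;T)$, which is only of order $T^{-1/(2q)}$ with $1/(2q)<1$. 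Indeed, since $\lambda_0$ is the \emph{smallest} root of \eqref{eqn-lambda} and $\lambda\mapsto \lambda e^{-\lambda/e}C_2(e^{-\lambda/e};q)$ vanishes as $\lambda\to 0^+$, one has $e\lambda_0 T e^{-\lambda_0 T}C_2(q;T)<1$ for every $T<1/e$, i.e. $1/(e\lambda_0 T)>e^{-\lambda_0 T}C_2(q;T)$; so \eqref{eqn-26} never implies the base-point bound in \eqref{eqrho2} in your unrescaled setting, contrary to your claim that it is ``designed to deliver the first displayed inequality''.

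The paper cures both defects at once by setting $\mu_0:=\lambda_0\sup_{m\geq 0}|\nabla^m f(x_0)|$, so that \eqref{eqn-26} reads exactly $\mu_0 T<1/e$ (this is \eqref{eqn-24}), and by applying the integrability machinery to the \emph{rescaled} equation $x_{\mu_0}'=f(x_{\mu_0})/\mu_0$ on the horizon $\mu_0 T$: the rescaled nonlinearity has all base-point derivatives equal to the constant $1/\lambda_0$, which is precisely what an exponential lifetime density tolerates on a horizon below $1/e$. A second point you miss is strictness: at rate $\lambda_0$ and horizon $1/e$ the key inequality holds only with equality by \eqref{eqn-lambda}, so the paper takes $\lambda$ strictly \emph{below} $\lambda_0$ and exploits the strict gap $\mu_0 T<1/e$ together with continuity to get \eqref{eqn-25}, namely $C_2(e^{-\lambda\mu_0 T};q)>e^{\lambda\mu_0 T}/\lambda$, whence
\begin{equation*}
  e^{-\lambda\mu_0 T}C_2(e^{-\lambda\mu_0 T};q)>\frac{1}{\lambda}>\frac{1}{\lambda_0}
  =\frac{1}{\mu_0}\sup_{m\geq 0}|\nabla^m f(x_0)|,
\end{equation*}
which is \eqref{eqrho2} for the rescaled problem; your plan, which pins the rate at $\lambda_0$ and leans on an unverified monotonicity of $T\mapsto\lambda_0 e^{-\lambda_0 T}C_2(q;T)$, cannot produce these strict inequalities. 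The only parts of your outline that survive are the final appeals: \eqref{eqn-27} matching \eqref{cond-g-domain-2} with $g=f$, and the conclusion via Theorem~\ref{unf-integ-1}-$(ii)$ feeding Theorem~\ref{tjklf-3-2}.
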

\begin{proof}
 Let 
\begin{equation}\label{eqn-24}
  \mu_0 := \lambda_0 \sup_{m\geq 0} |\nabla^m f (x_0)|
  < \frac{1}{eT}.
\end{equation} 
 By definition of $\lambda_0$, for $\lambda$ sufficiently close to
 but smaller than $\lambda_0$, it holds that
\begin{equation*}
  \mu_0 T < \frac{1}{e} = \frac{\lambda_0}{e} e^{-\lambda_0/e} 
  C_2 (e^{-\lambda_0/e}; q) < \lambda \mu_0 T e^{-\lambda \mu_0 T} C_2 (e^{-\lambda \mu_0 T}; q),
\end{equation*}
that is 
\begin{equation}\label{eqn-25}
  C_2(e^{-\lambda \mu_0 T}; q) > \frac{1}{\lambda} e^{\lambda \mu_0 T}, 
\end{equation}
 hence from Remark~\ref{rmk-exp}-$(i)$ we can choose $\rho$ to be
 the exponential density $\rho(t) = e^{-\lambda t}, t\ge 0$.
 It then follows from Remark \ref{lemma-rescaling} with $\mu = \mu_0$ that 
 Condition~\eqref{eqrho2} with $g=f$ reduces to
  \begin{equation*}
   \frac{1}{\mu_0} \sup_{m\geq 0} |\nabla^m f (x_0)| < e^{-\lambda \mu_0 T} C_2 (e^{-\lambda \mu_0 T}; q),
  \end{equation*}
  which is fulfilled due to our
  choice of $\mu_0$ in \eqref{eqn-24} and \eqref{eqn-25}.
  We conclude from Theorem~\ref{unf-integ-1}-$(ii)$.
\end{proof}
\begin{remark}\label{rmk-lambda}
  It is easy to deduce that the solution $\lambda_0$ of \eqref{eqn-lambda} satisfies ${3}/{2} < \lambda_0 < e$, and it is increasing as $q\in (1, \infty)$ increases. When $q$ goes to infinity, $\lambda_0$ tends to $e$.
\end{remark}
\begin{example}
  Consider the monomial nonlinearity $f(x) = x^n$ for
  some $n\geq 2$, with initial data $x_0 > 0$.
  The solution of \eqref{ODE} is given by
  $$
  x(t) =
  \frac{1}{\big(x_0^{-1/(n-1)}- (n-1)t\big)^{1/(n-1)}},
  \qquad 0\le t< \frac{1}{(n-1)x_0^{n-1}}. 
  $$
  Letting $n_0: = \min ( n , \lfloor x_0\rfloor )$,
  we have $n_0\leq x_0< n_0+1\leq n$
  or $n=n_0 \leq x_0$, hence 
\begin{align*} 
 a(x_0;n_0) & := \max_{0\leq m \leq n} |\nabla^m f (x_0)|
\\
&
= \max_{0\leq m \leq n} \frac{n!}{m!} x_0^m
\\
&
= \frac{n!}{n_0!} x_0^{n_0}
 \\
  & \geq 2.
\end{align*} 
 Condition \eqref{eqn-26} becomes 
\begin{equation}\label{eqn-30-0}
  T < \frac{1}{ e \lambda_0 a(x_0;n_0)}, 
\end{equation}
which implies $\lambda_0 T<1/(2e)$.
 It also follows from Remark \ref{rmk-lambda} that
$$
\frac{T}{(1-e^{-\lambda_0 T})^{1/(2q)}} <
 \frac{1}{2e\lambda_0 (1-e^{-1/(2e)})^{1/(2q)}} <
   1,
   $$
 and we have 
$$
  \sup_{|x-x_0| < T / (1-e^{-\lambda_0 T})^{{1}/{(2q)}}
    \atop 
    0 \leq m \leq n} \left| \nabla^m f(x) \right|
    <
  \frac{n!}{n_0!}
  \big(
  x_0
  + T(1-e^{-\lambda_0 T})^{-1/(2q)} \big)^{n_0+1}
   < \frac{n!}{n_0!}
  (x_0+1)^{n_0+1}. 
$$ 
 Therefore, Conditions~\eqref{eqn-26} and \eqref{eqn-27} hold under
 \eqref{eqn-30-0} and %
 $$
 T < 
 - \frac{1}{\lambda_0}
 \log \left( 1 - \left( \frac{n_0!}{n!}\right)^{2q}
 (x_0+1)^{-2(n_0+1)q}
 \right)
 < \frac{1}{(n-1)x_0^{n-1}}.  
 $$
 In particular, when $n=2$ and $x_0=1$ we have $n_0=1$ and $a(x_0;n_0)=2$,
 hence 
 $$
 T <
 \frac{2}{3} 
 \min \left(
 \frac{1}{2e},  
 - 
 \log \left( 1 - \frac{1}{2^{6q}} 
 \right)\right).  
$$ 
\end{example}
\begin{example}
 Consider the exponential nonlinearity $f(x) = e^x$,
 and initial data $x_0 >0$. The solution of \eqref{ODE} is given by
$$
 x(t) = -\log ( e^{-x_0}- t), \quad 0\le t< e^{-x_0}, 
$$
 We have
$$ 
 a(x_0;n_0) := \sup_{ m \geq 0} |\nabla^m f (x_0)| = e^{x_0}.
$$ 
 Condition \eqref{eqn-26} becomes 
\begin{equation}\label{eqn-30-0-1}
  T < \frac{1}{\lambda_0e^{x_0+1}}, 
\end{equation}
which implies $\lambda_0 T<1/e$,
and it follows from Remark \ref{rmk-lambda} that
$$
\frac{T}{(1-e^{-\lambda_0 T})^{1/(2q)}} <
 \frac{1}{e\lambda_0 (1-e^{-1/e})^{1/(2q)}} <
   1,
   $$
 and we have 
$$ 
  \sup_{|x-x_0| < T / (1-e^{-\lambda_0 T})^{{1}/{(2q)}}
    \atop 
    m\geq 0
  } \left| \nabla^m f(x) \right|
  <
 e^{
   x_0 + T(1-e^{-\lambda_0 T})^{-1/(2q)} }
 <
 e^{ x_0 + 1}. 
 $$
 Therefore, Conditions~\eqref{eqn-26} and \eqref{eqn-27} hold under
 \eqref{eqn-30-0-1} and %
$$
 T < 
 - \frac{1}{\lambda_0}
 \log \big( 1 - e^{-(x_0+1)(2q)}\big), 
$$
 i.e.
$$
 T < 
 \frac{2}{3} 
 \min\left(
 e^{-x_0-1},
 - 
 \log \big( 1 - e^{-(x_0+1)(2q)}\big)
 \right)
 < e^{-x_0}, 
$$
\end{example}
}
\appendix

\section{Branching trees {\em vs.} Butcher trees}
\label{btbt}
\noindent
 This section presents the full construction of the
 random tree $\mathcal{B}^{0,c}$, and describes the connection between 
 the functional  $\mathcal{H}_t \left(\mathcal{B}^{0,c}\right)$
 and the Butcher series \eqref{fjkld9} in Lemma~\ref{jklds1}.  
 Recall that a rooted tree $\tau = (V,E, \bigcdot)$ is a nonempty set $V$ of vertices and a set of edges $E$ between some of the pairs of vertices, with a specific vertex $\bigcdot$ called the root, such that the graph $(V,E)$ is connected with no loops.
We call the two special trees $\emptyset$ and $\bigcdot$ empty tree and dot tree respectively. We denote by $\mathbf{T}$ the set of all rooted
 trees,
 and by $\mathbf{T}_n$, $n\geq 0$, the subset of $\mathbf{T}$ consisting of all trees with order $n$.
 The following notion of grafting product is a generalization
 of the notion of beta product
 from unlabeled trees, see \cite[Section~2.1]{But21}, 
 to labeled trees. 

 \begin{definition}[Grafting product]
   \begin{enumerate}[i)]
     \item 
  Given $\tau_1$, $\tau_2$ two labeled trees and
  $l\in \{1,\ldots, |\tau_1|\}$,
  the grafting-product with label $l$ of $\tau_1$ and $\tau_2$,
  denoted by $\tau_1 *_l \tau_2$,
  is the tree of order
  $|\tau_1| + |\tau_2|$ formed by grafting (attaching) $\tau_2$
  from its root to the vertex $l$ of $\tau_1$,
  so that the whole $\tau_2$ are descendants of vertex $l$.
     \item 
  The new tree is labeled by keeping all labels of $\tau_1$,
  and by adding $|\tau_1|$ to all labels of $\tau_2$.
     \item 
  For any labeled tree $\tau$, we let
  $\emptyset *_0 \tau = \tau *_l \emptyset = \tau$ for all $0\le l\le |\tau|$, and keep the labels of $\tau$.
   \end{enumerate}
 \end{definition}
\noindent 
For $t\geq 0$ we consider the finite random tree
$\mathcal{B}^{0,\id}|_{[0,t]}$ obtained by killing all offsprings
of $\mathcal{B}^{0,\id}$ that are born beyond time $t$. 
The (random) leaves of $\mathcal{B}^{0,\id}|_{[0,t]}$ coincide with the set $\mathcal K^\pt_t$.
 Associated to every sample tree of $\mathcal{B}^{0,\id}|_{[0,t]}$, we
 draw a labeled and marked Butcher tree
 denoted by $\mathcal T^{\id}$ via the following 
 recursive algorithm based on the sequence of
 splitting times of $\mathcal{B}^{0,\id}|_{[0,t]}$ sorted
 in increasing order.\footnote{This is possible since
 the splitting times of $\mathcal{B}^{0,\id}|_{[0,t]}$ are a.s. distinct.}

 \newpage
  
\begin{algorithm}
 \caption{Recursive construction of the labeled tree $\mathcal T^{\id}$.} %
\label{fjkl432}
\begin{algorithmic}
\STATE \textbf{Initialization:}
Starting from an initial branch with
 label $0$ and mark $\id$,
 we initialize an empty tree $\emptyset$ with label $0$ and mark $\id$.

\vspace{-0.2cm}

\tikzstyle{level 1}=[level distance=4cm, sibling distance=4cm]
\tikzstyle{level 2}=[level distance=4cm, sibling distance=4cm]

\begin{table}[H]
\centering
\scalebox{0.8}{
  \begin{tabular}{||c|C{10cm}|C{3cm}|C{2.9cm}||}
    \hline Splitting 
    & Branching tree $\mathcal{B}^{0,\id}|_{[0,0]}$ %
    & Labeled tree $\mathcal T^{\id}$ %
    & $\prod_{\mathbf k \in \mathcal{K}^{\partial}_0} c_{\mathbf{k}}$ %
    \\
\hline \hline
0-th 
&
\scalebox{0.8}{
\begin{tikzpicture}[scale=0.7,grow=right, sloped]
\node[rectangle,draw,black,text=black,thick]{$0$}
    child {
        node[black,text=black,thick] {} %
            edge from parent
            node[above] {$0$}
            node[below]{${\rm Id}$}
    };
\end{tikzpicture}
}
&  
\begin{tikzpicture}[scale=0.7,grow=right, sloped]
\node[black,text=black,thick,scale=1.2]{$\prescript{}{0}{\varnothing}_{{\rm Id}}$};
\end{tikzpicture}
& $1$
\\
\hline\hline
\end{tabular}
}
\end{table}

\vspace{-0.3cm}

 If only one splitting time occurred before time $t$,
 \begin{itemize}
 \item the initial branch yields one offspring with
   label $1$ and mark $f$,
   and 
 \item
   we update the initial labeled tree $\emptyset$ to
 $\emptyset *_0 \bigcdot = \bigcdot$, and mark this root with $f$.
\end{itemize} 

\vspace{-0.3cm}

\begin{table}[H]
\centering
\scalebox{0.8}{
  \begin{tabular}{||c|C{10cm}|C{3cm}|C{2.9cm}||}
    \hline Splitting 
    & Branching tree $\mathcal{B}^{0,\id}|_{[0,t]}$ %
    & Labeled tree $\mathcal T^{\id}$ %
    & $\prod_{\mathbf k \in \mathcal{K}^{\partial}_t} c_{\mathbf{k}}$ %
    \\
\hline \hline
1st 
&
\scalebox{0.8}{
\begin{tikzpicture}[scale=0.7,grow=right, sloped]
\node[rectangle,draw,black,text=black,thick]{$0$}
    child {
        node[rectangle,draw,black,text=black,thick] {$T_1$}
            child {
                node[black,text=black, thick] {} %
                edge from parent
                node[above] {$1$}
                node[below]  {$f$}
            }
            edge from parent
            node[above] {$0$}
            node[below]{${\rm Id}$}
    };
\end{tikzpicture}
}
&  
 \scalebox{1}{
\begin{tikzpicture}
	\tikzmath{
		\LL = 1.25; %
		\RR = 0.07; %
	}
	
	\coordinate (4) at (0,0);
	
	\draw[fill=black] (4) circle(\RR) node[right]{f} node[left]{1};
\end{tikzpicture}
}
 & $f$
 \\
\hline\hline
\end{tabular}
}
\end{table}

\vspace{-0.3cm}

\STATE \textbf{Recursion:}
 Assume that a marked labeled tree $\tau$ with order $|\tau|=i \ge 1$  
 has been constructed, with $i$-$th$ splitting time not later than $t$. 
 \begin{itemize}
   \item 
 If the $(i+1)$-$th$ splitting time of a
 branch with label $l \geq 1$ and mark $\nabla^m f$, $m\geq 0$, 
 is not later than $t$ it then gives two offsprings, respectively
 with label $i+1$ and mark $f$,
 and with label $l$ and mark $\nabla^{m+1} f$. 
\item
  We update the labeled tree $\tau$ to $\tau *_l \bigcdot$. We also update the mark of the vertex $l$ to $\nabla^{m+1} f$ and mark the new vertex $i+1$ with $f$.
 \end{itemize}

 \vspace{-0.3cm}

\begin{table}[H]
\centering
\scalebox{0.8}{
  \begin{tabular}{||c|C{10cm}|C{3cm}|C{2.9cm}||}
    \hline Splitting 
    & Branching tree $\mathcal{B}^{0,\id}|_{[0,t]}$ %
    & Labeled tree $\mathcal T^{\id}$ %
    & $\prod_{\mathbf k \in \mathcal{K}^{\partial}_t} c_{\mathbf{k}}$ %
    \\
\hline \hline
2nd 
&
\scalebox{0.7}{
\begin{tikzpicture}[scale=0.7,grow=right, sloped]
\node[rectangle,draw,black,text=black,thick]{$0$}
    child {
        node[rectangle,draw,black,text=black,thick] {$T_1$}
            child {
                node[rectangle,draw,black,text=black,thick] {$T_2$}
                child{
                node[black,text=black,thick]{} %
                edge from parent
                node[above]{$1$}
                node[below]{$\nabla f$}
                }
                child[draw=blue]{
                node[thick]{} %
                edge from parent
                node[above]{$2$}
                node[below]{$\textcolor{blue}{f}$}
                }
                edge from parent
                node[above] {$1$}
                node[below]  {$f$}
            }
            edge from parent
            node[above] {$0$}
            node[below]{${\rm Id}$}
    };
\end{tikzpicture}
}
&
 \scalebox{1}{
\begin{tikzpicture}
	\tikzmath{
		\LL = 1.25; %
		\RR = 0.07; %
	}
	
	\coordinate (4) at (0,0);
	\coordinate (5) at ($(4) + (0, -\LL)$);
	
	\draw[thick] (4) -- (5);
		
	\draw[fill=black] (4) circle(\RR) node[right]{\textcolor{blue}{$f$}} node[left]{2};
	\draw[fill=black] (5) circle(\RR) node[right]{$\nabla f$} node[left]{1};
\end{tikzpicture}
}
 & $(\nabla f ) f$
 \\
\hline\hline
\end{tabular}
}
\end{table}

\vspace{-0.2cm}

\end{algorithmic}
\end{algorithm}

 \noindent 
 Since $\mathcal{B}^{0,\id}|_{[0,t]}$ has finite splitting, the above induction will end in finite steps. 
 The following graphs further illustrate this recursive
 construction.

\tikzstyle{level 1}=[level distance=4cm, sibling distance=4cm]
\tikzstyle{level 2}=[level distance=4cm, sibling distance=4cm]

\begin{table}[H]
\centering
\scalebox{0.8}{
  \begin{tabular}{||c|C{10cm}|C{3cm}|C{2.9cm}||}
\hline
Splitting 
& Branching tree $\mathcal{B}^{0,\id}|_{[0,t]}$ %
& Labeled tree $\mathcal T^{\id}$ %
& $\prod_{\mathbf k \in \mathcal{K}^{\partial}_t} c_{\mathbf{k}}$ %
\\
\hline \hline
3rd 
&
\scalebox{0.7}{
\begin{tikzpicture}[scale=0.7,grow=right, sloped]
\node[rectangle,draw,black,text=black,thick]{$0$}
    child {
        node[rectangle,draw,black,text=black,thick] {$T_0$}
            child {
                node[rectangle,draw,black,text=black,thick] {$T_{1}$}
                child{
                node[rectangle,draw,black,text=black,thick]{$T_2$}
                child{
                    node[black,text=black,thick]{} %
                    edge from parent
                    node[above]{$1$}
                    node[below]{$\nabla^2 f$}
                }
                child[draw=blue]{
                    node[black,text=black,thick]{} %
                    edge from parent
                    node[above]{$3$}
                    node[below]{$\textcolor{blue}{f}$}
                }
                edge from parent
                node[above]{$1$}
                node[below]{$\nabla f$}
                }
                child[draw=purple]{
                node[thick]{} %
                edge from parent
                node[above]{$2$}
                node[below]{$\textcolor{purple}{f}$}
                }
                edge from parent
                node[above] {$1$}
                node[below]  {$f$}
            }
            edge from parent
            node[above] {$0$}
            node[below]{${\rm Id}$}
    };
\end{tikzpicture}
}
&
 \scalebox{1}{
\begin{tikzpicture}
	\tikzmath{
		\LL = 1.25; %
		\RR = 0.07; %
	}
	
	\coordinate (4) at (0,0);
	\coordinate (5) at ($(4) + (-\LL / 2 , \LL)$);
	\coordinate (6) at ($(4) + (\LL / 2, \LL)$);
	
	\draw[thick] (4) -- (5);
	\draw[thick] (4) -- (6);
		
	\draw[fill=black] (4) circle(\RR) node[right]{$\nabla^2 f$} node[left]{1};
	\draw[fill=black] (5) circle(\RR) node[right]{\textcolor{purple}{$f$}} node[left]{2};
	\draw[fill=black] (6) circle(\RR) node[right]{\textcolor{blue}{$f$}} node[left]{3};
\end{tikzpicture}
}
 & $\nabla^2 f (f,f)$
 \\
\hline\hline
\end{tabular}
}
\end{table}

\vspace{-0.6cm}

\begin{table}[H]
\centering
\scalebox{0.8}{
\begin{tabular}{||C{1.4cm}|C{10cm}|C{3cm}|C{2.9cm}||}
\hline
\hline
3rd 
&
\scalebox{0.65}{
\begin{tikzpicture}[scale=0.8,grow=right, sloped]
\node[rectangle,draw,black,text=black,thick]{$0$}
    child {
        node[rectangle,draw,black,text=black,thick] {$T_0$}
            child {
                node[rectangle,draw,black,text=black,thick] {$T_{1}$}
                child{
                node[black,text=black,thick]{} %
                edge from parent
                node[above]{$1$}
                node[below]{$\nabla f$}
                }
                child{
                node[rectangle,draw,thick]{$T_2$}
                    child[draw=blue]{
                    node[black,text=black,thick]{} %
                    edge from parent
                    node[above]{$2$}
                    node[below]{$\textcolor{blue}{\nabla f}$}
                    }
                    child[draw=purple]{
                    node[black,text=black,thick]{} %
                    edge from parent
                    node[above]{$3$}
                    node[below]{$\textcolor{purple}{f}$}
                    }
                edge from parent
                node[above]{$2$}
                node[below]{$f$}
                }
                edge from parent
                node[above] {$1$}
                node[below]  {$f$}
            }
            edge from parent
            node[above]{$0$}
            node[below]{${\rm Id}$}
    };
\end{tikzpicture}
}
&
 \scalebox{1}{
\begin{tikzpicture}
	\tikzmath{
		\LL = 1.25; %
		\RR = 0.07; %
	}
	
	\coordinate (4) at (0,0);
	\coordinate (5) at ($(4) + (0, \LL)$);
	\coordinate (6) at ($(4) + (0, 2 * \LL)$);
	
	\draw[thick] (4) -- (5);
	\draw[thick] (4) -- (6);
		
	\draw[fill=black] (4) circle(\RR) node[right]{$\nabla f$} node[left]{1};
	\draw[fill=black] (5) circle(\RR) node[right]{\textcolor{blue}{$\nabla f$}} node[left]{2};
	\draw[fill=black] (6) circle(\RR) node[right]{\textcolor{purple}{$f$}} node[left]{3};
\end{tikzpicture}
}
 & $(\nabla f)^2f$
 \\
\hline\hline
\end{tabular}
}
\end{table}

\vspace{-0.6cm}

\begin{table}[H]
\centering
\scalebox{0.8}{
          {
            \begin{tabular}{||c|C{10cm}|C{3cm}|C{2.9cm}||}
\hline
Splitting 
& Branching tree $\mathcal{B}^{0,\id}|_{[0,t]}$ %
& Labeled tree $\mathcal T^{\id}$ %
& $\prod_{\mathbf k \in \mathcal{K}^{\partial}_t} c_{\mathbf{k}}$ %
\\ 
\hline \hline
4-th 
          &
\scalebox{0.6}{
\begin{tikzpicture}[scale=0.7,grow=right, sloped]
\node[rectangle,draw,black,text=black,thick]{$0$}
    child {
        node[rectangle,draw,black,text=black,thick] {$T_0$}
            child {
                node[rectangle,draw,black,text=black,thick] {$T_{1}$}
                child{
                node[rectangle,draw,black,text=black,thick]{$T_2$}
                    child{
                        node[rectangle,draw,black,text=black,thick]{$T_3$}
                        child{
                            node[black,text=black,thick]{} %
                        edge from parent
                        node[above]{$1$}
                        node[below]{$\nabla^3 f$}
                        }
                        child[draw=blue]{
                            node[black,text=black,thick]{} %
                        edge from parent
                        node[above]{$4$}
                        node[below]{$\textcolor{blue}{f}$}
                        }
                    edge from parent
                    node[above]{$1$}
                    node[below]{$\nabla^2 f$}
                    }
                    child[draw=purple]{
                        node[black,text=black,thick]{} %
                        edge from parent
                        node[above]{$3$}
                        node[below]{$\textcolor{purple}{f}$}
                    }
                edge from parent
                node[above]{$1$}
                node[below]{$\nabla f$}
                }
                child[draw=cyan]{
                node[thick]{} %
                edge from parent
                node[above]{$2$}
                node[below]{$\textcolor{cyan}{f}$}
                }
                edge from parent
                node[above] {$1$}
                node[below]  {$f$}
            }
            edge from parent
            node[above]{$0$}
            node[below]{${\rm Id}$}
    };
\end{tikzpicture}
       }
&
\scalebox{1}{
\begin{tikzpicture}
	\tikzmath{
		\LL = 1.25; %
		\RR = 0.07; %
	}
	
	\coordinate (4) at (0,0);
	\coordinate (5) at ($(4) + (- 4 * \LL /5 , \LL)$);
	\coordinate (6) at ($(4) + (0, \LL)$);
	\coordinate (7) at ($(4) + ( 4 * \LL /5 , \LL)$);
	
	\draw[thick] (4) -- (5);
	\draw[thick] (4) -- (6);
	\draw[thick] (4) -- (7);
		
	\draw[fill=black] (4) circle(\RR) node[right]{$\nabla^3 f$} node[left]{1};
	\draw[fill=black] (5) circle(\RR) node[right]{\textcolor{blue}{$f$}} node[left]{4};
	\draw[fill=black] (6) circle(\RR) node[right]{\textcolor{purple}{$f$}} node[left]{3};
	\draw[fill=black] (7) circle(\RR) node[right]{\textcolor{cyan}{$f$}} node[left]{2};
\end{tikzpicture}
}
& $(\nabla^3 f) (f,f,f)$
\\
\hline\hline
\end{tabular}
          }
          }
\end{table}

\vspace{-0.6cm}

\begin{table}[H]
\centering
\scalebox{0.8}{
\begin{tabular}{||C{1.4cm}|C{10cm}|C{3cm}|C{2.9cm}||}
\hline \hline
4-th 
&
\scalebox{0.6}{
\begin{tikzpicture}[scale=0.8,grow=right, sloped]
\node[rectangle,draw,black,text=black,thick]{$0$}
    child {
        node[rectangle,draw,black,text=black,thick] {$T_0$}
            child {
                node[rectangle,draw,black,text=black,thick] {$T_{1}$}
                child{
                node[rectangle,draw,black,text=black,thick,yshift=-0.4cm]{$T_3$}
                    child{
                        node[black,text=black,thick]{} %
                    edge from parent
                    node[above]{$1$}
                    node[below]{$\nabla^2 f$}
                    }
                    child[draw=blue]{
                        node[black,text=black,thick]{} %
                        edge from parent
                        node[above]{$4$}
                        node[below]{$\textcolor{blue}{f}$}
                    }
                edge from parent
                node[above]{$1$}
                node[below]{$\nabla f$}
                }
                child{
                node[rectangle,draw,thick,yshift=0.4cm]{$T_2$}
                    child[draw=purple]{
                    node[black,text=black,thick]{} %
                    edge from parent
                    node[above]{$2$}
                    node[below]{$\textcolor{purple}{\nabla f}$}
                    }
                    child[draw=cyan]{
                    node[black,text=black,thick]{} %
                    edge from parent
                    node[above]{$3$}
                    node[below]{$\textcolor{cyan}{f}$}
                    }
                edge from parent
                node[above]{$2$}
                node[below]{$f$}
                }
                edge from parent
                node[above] {$1$}
                node[below]  {$f$}
            }
            edge from parent
            node[above]{$0$}
            node[below]{${\rm Id}$}
    };
\end{tikzpicture}
       }
&
\scalebox{1}{
\begin{tikzpicture}
	\tikzmath{
		\LL = 1.25; %
		\RR = 0.07; %
	}
	
	\coordinate (4) at (0,0);
	\coordinate (5) at ($(4) + (- \LL /2 , \LL)$);
	\coordinate (6) at ($(4) + ( \LL /2 , \LL)$);
	\coordinate (7) at ($(4) + ( \LL /2, 2 * \LL)$);
	
	\draw[thick] (4) -- (5);
	\draw[thick] (4) -- (6);
	\draw[thick] (6) -- (7);
		
	\draw[fill=black] (4) circle(\RR) node[right]{$\nabla^2 f$} node[left]{1};
	\draw[fill=black] (5) circle(\RR) node[right]{\textcolor{blue}{$f$}} node[left]{4};
	\draw[fill=black] (6) circle(\RR) node[right]{\textcolor{purple}{$\nabla f$}} node[left]{2};
	\draw[fill=black] (7) circle(\RR) node[right]{\textcolor{cyan}{$f$}} node[left]{3};
\end{tikzpicture}
}
& $(\nabla^2 f) (f,(\nabla f ) f)$
\\
\hline\hline
\end{tabular}
}
\end{table}

\vspace{-0.6cm}

\begin{table}[H]
\centering
\scalebox{0.8}{
\begin{tabular}{||C{1.4cm}|C{10cm}|C{3cm}|C{2.9cm}||}
\hline \hline
4-th 
&
\scalebox{0.6}{
\begin{tikzpicture}[scale=0.8,grow=right, sloped]
\node[rectangle,draw,black,text=black,thick]{$0$}
    child {
        node[rectangle,draw,black,text=black,thick] {$T_0$}
            child {
                node[rectangle,draw,black,text=black,thick] {$T_{1}$}
                child{
                    node[black,text=black,thick]{} %
                edge from parent
                node[above]{$1$}
                node[below]{$\nabla f$}
                }
                child{
                node[rectangle,draw,thick]{$T_2$}
                    child{
                    node[rectangle,draw,black,text=black,thick]{$T_3$}
                    child[draw=blue]{
                        node[black,text=black,thick]{} %
                    edge from parent
                    node[above]{$2$}
                    node[below]{$\textcolor{blue}{\nabla^2 f}$}                    
                    }
                    child[draw=purple]{
                        node[black,text=black,thick]{} %
                    edge from parent
                    node[above]{$4$}
                    node[below]{$\textcolor{purple}{f}$}                    
                    }
                    edge from parent
                    node[above]{$2$}
                    node[below]{$\nabla f$}
                    }
                    child[draw=cyan]{
                    node[black,text=black,thick]{} %
                    edge from parent
                    node[above]{$3$}
                    node[below]{$\textcolor{cyan}{f}$}
                    }
                edge from parent
                node[above]{$2$}
                node[below]{$f$}
                }
                edge from parent
                node[above] {$1$}
                node[below]  {$f$}
            }
            edge from parent
            node[above]{$0$}
            node[below]{${\rm Id}$}
    };
\end{tikzpicture}
       }
&
\scalebox{1}{
  \begin{tikzpicture}
	\tikzmath{
		\LL = 1.25; %
		\RR = 0.07; %
	}
	
	\coordinate (4) at (0,0);
	\coordinate (5) at ($(4) + (0 , \LL)$);
	\coordinate (6) at ($(4) + ( - \LL /2 , 2* \LL)$);
	\coordinate (7) at ($(4) + ( \LL /2, 2 * \LL)$);
	
	\draw[thick] (4) -- (5);
	\draw[thick] (5) -- (6);
	\draw[thick] (5) -- (7);
		
	\draw[fill=black] (4) circle(\RR) node[right]{$\nabla f$} node[left]{1};
	\draw[fill=black] (5) circle(\RR) node[right]{\textcolor{blue}{$\nabla^2 f$}} node[left]{2};
	\draw[fill=black] (6) circle(\RR) node[right]{\textcolor{cyan}{$f$}} node[left]{3};
	\draw[fill=black] (7) circle(\RR) node[right]{\textcolor{purple}{$f$}} node[left]{4};
\end{tikzpicture}
}
& $(\nabla f) (\nabla^2 f) (f,f)$
\\
\hline\hline
\end{tabular}
}
\end{table}

\vspace{-0.6cm}

\begin{table}[H]
\centering
\scalebox{0.8}{
\begin{tabular}{||C{1.4cm}|C{10cm}|C{3cm}|C{2.9cm}||}
\hline \hline
4-th 
&
\scalebox{0.6}{
\begin{tikzpicture}[scale=0.8,grow=right, sloped]
\node[rectangle,draw,black,text=black,thick]{$0$}
    child {
        node[rectangle,draw,black,text=black,thick] {$T_0$}
            child {
                node[rectangle,draw,black,text=black,thick] {$T_{1}$}
                child{
                    node[black,text=black,thick]{} %
                edge from parent
                node[above]{$1$}
                node[below]{$\nabla f$}
                }
                child{
                node[rectangle,draw,thick]{$T_2$}
                    child[draw=blue]{
                    node[black,text=black,thick]{} %
                    edge from parent
                    node[above]{$2$}
                    node[below]{$\textcolor{blue}{\nabla f}$}
                    }
                    child{
                    node[rectangle,draw,black,text=black,thick]{$T_3$}
                        child[draw=purple]{
                            node[black,text=black,thick]{} %
                        edge from parent
                        node[above]{$3$}
                        node[below]{$\textcolor{purple}{\nabla f}$}
                        }
                        child[draw=cyan]{
                            node[black,text=black,thick]{} %
                        edge from parent
                        node[above]{$4$}
                        node[below]{$\textcolor{cyan}{f}$}
                        }
                    edge from parent
                    node[above]{$3$}
                    node[below]{$f$}
                    }
                edge from parent
                node[above]{$2$}
                node[below]{$f$}
                }
                edge from parent
                node[above] {$1$}
                node[below] {$f$}
            }
            edge from parent
            node[above]{$0$}
            node[below]{${\rm Id}$}
    };
\end{tikzpicture}
}
&
\scalebox{1}{
  \begin{tikzpicture}
	\tikzmath{
		\LL = 1.25; %
		\RR = 0.07; %
	}
	
	\coordinate (4) at (0,0);
	\coordinate (5) at ($(4) + (0 , \LL)$);
	\coordinate (6) at ($(4) + (0, 2* \LL)$);
	\coordinate (7) at ($(4) + (0, 3 * \LL)$);
	
	\draw[thick] (4) -- (5);
	\draw[thick] (5) -- (6);
	\draw[thick] (5) -- (7);
		
	\draw[fill=black] (4) circle(\RR) node[right]{$\nabla f$} node[left]{1};
	\draw[fill=black] (5) circle(\RR) node[right]{\textcolor{blue}{$\nabla f$}} node[left]{2};
	\draw[fill=black] (6) circle(\RR) node[right]{\textcolor{purple}{$\nabla f$}} node[left]{3};
	\draw[fill=black] (7) circle(\RR) node[right]{\textcolor{cyan}{$f$}} node[left]{4};
\end{tikzpicture}
}
& $(\nabla f )^3f$
\\
\hline\hline
\end{tabular}
}
\end{table}
        
\vspace{-0.4cm}

\noindent
 The above construction relies on two inductive assumptions,
 which are proved  in the next lemma
 by induction on splitting times. 
\begin{lemma}
  \label{l}
  \begin{enumerate}[i)]
  \item
    The labeled tree $\tau$ drawn at the $i$-$th$ splitting of $\mathcal{B}^{0,\id}|_{[0,t]}$ has order $|\tau| = i \geq 1$,
    its vertex labels form permutation of $\{0,\ldots,|\tau|\}$, 
    and each vertex with $m$ descendants is marked by $\nabla^m f$,
    with $m\le i-1$. 
\item Right after the $i$-$th$ splitting of $\mathcal{B}^{0,\id}|_{[0,t]}$, $i\ge1$, the labels of all living branches form a permutation of $\{0,\ldots, i\}$. %
  If the label of a living branch coincides with the label of a vertex of the tree $\tau$ that we have drawn, then their marks also coincide.
\end{enumerate}
\end{lemma}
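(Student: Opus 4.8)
The plan is to establish both assertions simultaneously by induction on the number $i \ge 1$ of splittings of $\mathcal{B}^{0,\id}|_{[0,t]}$ that have occurred, because the two parts are mutually dependent: the update rule defining $\tau$ at the $(i+1)$-th splitting reads off the mark $\nabla^m f$ of the branch that splits, and matching this to the mark of the corresponding vertex of $\tau$ is exactly the content of part ii), while the number of descendants of that vertex is governed by part i). Proving them in sequence would be circular, so I would carry the two statements as a single inductive hypothesis, the induction being indexed by splitting times (well-ordered since these times are a.s. distinct).

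For the base case $i = 1$, I would note that before any splitting the unique branch is the $\id$-branch, which by the dynamics of $\mathcal{B}^{0,\id}$ produces a single offspring labeled $1$ with mark $f$; correspondingly the algorithm updates $\emptyset$ to $\emptyset *_0 \bigcdot = \bigcdot$ with its unique vertex marked $f$. This vertex gives order $|\tau| = 1$, carries $0 = m$ descendants with $m \le i - 1 = 0$ and mark $\nabla^0 f = f$, and its label matches that of the unique living branch together with its mark, settling both parts at $i = 1$.

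For the inductive step I would assume both claims after the $i$-th splitting and analyse the $(i+1)$-th splitting. By the branching dynamics this occurs at a living branch carrying a label $l \ge 1$ and a mark $\nabla^m f$; the inductive hypothesis ii) identifies $l$ with a vertex of $\tau$ bearing the same mark $\nabla^m f$, and the inductive hypothesis i) then guarantees that this vertex has exactly $m$ descendants. The branch splits into one offspring labeled $l$ with mark $\nabla^{m+1} f$ and one labeled $i+1$ with mark $f$, while the algorithm passes from $\tau$ to $\tau *_l \bigcdot$. Here I would invoke the grafting-product definition to check that exactly one new vertex is created, that it receives the label $|\tau| + 1 = i+1$ (the label $1$ of $\bigcdot$ shifted by $|\tau| = i$), and that it is attached as a child of vertex $l$; the algorithm then updates the mark of $l$ to $\nabla^{m+1} f$ and marks the new vertex by $f$. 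Reading off the consequences: the order increases to $i+1$; the vertex labels are the previous ones together with $i+1$; vertex $l$ acquires exactly one further descendant, so that its descendant count $m+1$ stays synchronized with its new mark $\nabla^{m+1} f$, with $m+1 \le i$ following from $m \le i-1$; and the new vertex has $0$ descendants and mark $f$. This reproduces part i). For part ii) I would observe that only the splitting branch is affected, so the living labels gain exactly $i+1$, and the mark coincidence is preserved since the two new offspring match the updated mark of $l$ and the mark of the new vertex, while all untouched branches inherit the coincidence from the hypothesis.

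The step I expect to be the main obstacle is precisely the intertwining of the two parts: one must verify that the descendant count of vertex $l$ in the Butcher tree and the gradient order $m$ of the splitting branch remain equal after every update, which relies on the grafting product incrementing the descendant count of vertex $l$ by exactly one in lockstep with the increment of the gradient order in the branching dynamics. Keeping this synchronization explicit through the induction — rather than the routine bookkeeping of labels and orders — is where the argument must be most careful.
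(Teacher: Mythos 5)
Your proof is correct and takes essentially the same route as the paper's: induction on the (a.s.\ distinct) splitting times, with the same base case and the same analysis of the update $\tau \mapsto \tau *_l \bigcdot$ at the $(i+1)$-th splitting, reading off the shifted label $i+1$ of the new vertex from the grafting product. The only difference is organizational: the paper runs the inductions for parts $i)$ and $ii)$ in sequence, its step for $i)$ tacitly using the coincidence of the splitting branch's mark with that of vertex $l$ (which is the content of $ii)$), whereas you carry both claims as a single simultaneous inductive hypothesis, making that mutual dependence explicit --- a slightly cleaner write-up of the same argument.
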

\begin{proof}
  $i)$ 
  The statements clearly hold for the first splitting.
  Suppose that they hold for the tree $\tau$ drawn at the $i$-$th$
  splitting, $i\ge1$.
  In the $(i+1)$-$th$ splitting, the new tree
  $\tau *_l \bigcdot$ is a labeled tree with order
  $|\tau| +1 = i+1$.
  Compared to $\tau$, the only vertex whose descendant number changes in the new tree $\tau *_l \bigcdot$ is the vertex $l$. This vertex has mark $\nabla^m f$ in $\tau$, so by the inductive assumption, it has $m$ descendants with $m\le i$. In the new tree $\tau *_l \bigcdot$, the vertex $l$ has one more descendant than in $\tau$, and its mark is updated to $\nabla^{m+1} f$. %
  
  $ii)$ %
  The statements hold for the first splitting, since the only living branch right after this splitting must be the only child of the initial branch, which has label $1$ and mark $f$, which is the same as the mark of the root we have drawn. Suppose that the statements hold for the $i$-$th$ splitting, $i\ge1$. Then by construction, at the $(i+1)$-$th$ splitting, the branch with label $l$ dies and gives two offsprings are labeled by $i+1$ and $l$, while all other living branches have distinct labels forming a permutation of $\{0,\ldots, i\} \setminus \{l\}$. Thus, the labels of all these living branches have distinct labels forming a permutation of $\{0,\ldots, i+1\}$. Again by construction, the two offsprings with labels $i+1$ and $l$ have marks $f$ and $\nabla^{m+1} f$, which are the same as those of the vertex $i+1$ and $l$ in the new drawn tree, while all other living branches also share the same marks with the vertices whose labels equal to their labels, by the inductive assumption. This completes the proof.
\end{proof}
\noindent
 The next result is consequence of Lemma~\ref{l}. 
\begin{corollary}
  \label{c}
  \begin{enumerate}[i)]
  \item
     Each vertex of $\mathcal T^{\id}$
     having $m$ descendants has the mark $\nabla^m f$, 
     for $m = 1,\ldots , |\mathcal T^{\id}|-1$. 
\item 
  The labels of the leaves of $\mathcal{B}^{0,\id}|_{[0,t]}$ are distinct and
  they constitute a permutation of $\{0,\cdots, |\mathcal K^\pt_t|\}$. If the label of a leave of $\mathcal{B}^{0,\id}|_{[0,t]}$ coincides with the label of a vertex of $\mathcal T^{\id}$, then their marks also coincide.
  \end{enumerate} 
\end{corollary}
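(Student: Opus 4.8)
The plan is to read off Corollary~\ref{c} as the terminal instance of the inductive construction governed by Lemma~\ref{l}. Since $\mathcal{B}^{0,\id}|_{[0,t]}$ splits only finitely often almost surely, write $M := |\mathcal K^\circ_t| = |\mathcal K^\pt_t|$ for the total number of splittings occurring in $[0,t]$, where the equality $|\mathcal K^\circ_t| = |\mathcal K^\pt_t|$ holds because the initial mark is $c = \id$. By construction the labeled tree $\mathcal T^{\id}$ is exactly the tree $\tau$ produced at the $M$-th step of Algorithm~\ref{fjkl432}, so that $|\mathcal T^{\id}| = M$.

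For part~$i)$, I would apply Lemma~\ref{l}-$i)$ at $i = M$. It asserts that every vertex of the tree drawn at the $M$-th splitting that has $m$ descendants is marked by $\nabla^m f$, with $m \le M - 1 = |\mathcal T^{\id}| - 1$. Restricting to the non-leaf vertices (those with $m \ge 1$ descendants) yields verbatim the claimed statement for $m = 1, \ldots, |\mathcal T^{\id}| - 1$; the leaves are the degenerate case $m = 0$, carrying the mark $f = \nabla^0 f$.

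For part~$ii)$, the first step is to identify the leaf set of $\mathcal{B}^{0,\id}|_{[0,t]}$, namely $\mathcal K^\pt_t$, with the set of branches alive immediately after the $M$-th splitting: by definition of $M$ no further splitting occurs in $(T_M, t]$, and killing the offsprings born after $t$ leaves every branch alive at time $t$ untouched. Lemma~\ref{l}-$ii)$ at $i = M$ then states that the labels of these living branches are distinct and form a permutation of $\{0, \ldots, M\} = \{0, \ldots, |\mathcal K^\pt_t|\}$, which is the first assertion. The second assertion, that a leaf and a vertex of $\mathcal T^{\id}$ bearing the same label also bear the same mark, is precisely the second sentence of Lemma~\ref{l}-$ii)$ evaluated at the final step, since the vertices of $\mathcal T^{\id}$ are the vertices of $\tau$ at the $M$-th splitting.

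I expect the only delicate point to be this matching of the two labelings at the terminal time: one must verify that the branches living right after $T_M$ coincide with the leaves of the \emph{killed} tree $\mathcal{B}^{0,\id}|_{[0,t]}$ and not with branches of the full tree $\mathcal{B}^{0,\id}$ that would split after $t$, and that the running integer relabeling of Algorithm~\ref{fjkl432} is consistent with the cardinality $|\mathcal K^\pt_t|$. Once this identification is secured from the finiteness of the splitting set, Corollary~\ref{c} is an immediate transcription of Lemma~\ref{l} read at $i = M$, requiring no further computation.
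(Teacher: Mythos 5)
Your proposal is correct and coincides with the paper's (implicit) argument: the paper offers no written proof beyond the remark that the corollary ``is consequence of Lemma~\ref{l}'', and your instantiation of both parts of Lemma~\ref{l} at the terminal splitting index $i = M = |\mathcal K^\circ_t| = |\mathcal K^\pt_t|$, together with the identification of the leaves of the killed tree $\mathcal{B}^{0,\id}|_{[0,t]}$ with the branches alive right after the $M$-th splitting, is exactly the intended derivation. The ``delicate point'' you flag is handled the same way the paper handles it, via the almost sure finiteness of the splitting set on $[0,t]$, so no further content is needed.
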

\noindent 
Next is the definition of elementary differentials,
cf. \cite[\S~3.3]{butcher2010}.
  \begin{definition}
 The elementary differential of $f$ is the mapping $F:\mathbf{T} \to \C^\infty(\R^d, \R^d)$ defined recursively by $F(\emptyset) = \id$, $F(\bigcdot) = f$, and $$
  F(\tau) = \nabla^m f(F(\tau_1), \ldots, F(\tau_m))$$
  for $\tau = [\tau_1, \ldots, \tau_m]$.
\end{definition}
  \noindent
   When  $|\tau|=n$, we have 
   \begin{equation}
     \nonumber %
  F(\tau ) =
  \nabla^{m_1} f(\nabla^{m_2} f(\cdots), \cdots, \cdots (\cdots, f) \cdots)
\end{equation}
for a sequence $(m_i)_{i=1,\ldots,n}$ of integers
satisfying $m_n=0$ and $\sum_{i=1}^n m_i = n-1$.
For a given tree $\tau\in\mathbf{T}$, the map $F$ also provides a way to mark each vertex of $\tau$ by $f$ or its derivatives: each vertex with no descendants is marked by $f$; the vertices with $m$ descendants
 are marked by $\nabla^m f$, for $m\geq 0$. 

\begin{lemma}
  \label{jklds1}
  Letting
  $\mathcal T^{\id}$ be the random labeled tree defined
  in Algorithm~\ref{fjkl432}, we have
  \begin{equation}
    \nonumber %
\prod_{\mathbf k \in \mathcal{K}^{\partial}_t} c_{\mathbf{k}} = F(\mathcal T^{\id}).
  \end{equation}
\end{lemma}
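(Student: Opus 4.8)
The plan is to argue by induction on the successive splitting times of $\mathcal{B}^{0,\id}|_{[0,t]}$, following the recursive construction of Algorithm~\ref{fjkl432}, maintaining the invariant that the composition of the marks carried by the branches alive right after the $i$-th splitting, read from the leaves downward, coincides with the elementary differential $F(\tau^{(i)})$ of the labeled tree $\tau^{(i)}$ drawn at that stage. Since $\mathcal{B}^{0,\id}|_{[0,t]}$ splits only finitely often, evaluating this invariant at the last splitting before $t$ identifies the alive branches with $\mathcal{K}^\partial_t$ and $\tau^{(i)}$ with $\mathcal{T}^{\id}$, which is the desired identity. For the base case $i=1$, a single branch with mark $f$ is alive and $\tau^{(1)}=\bigcdot$, so both sides reduce to $F(\bigcdot)=f$.

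For the inductive step I would use that the $(i+1)$-th splitting acts on a branch with label $l$ and mark $\nabla^m f$ by producing a fresh branch with label $i+1$ and mark $f$, together with a continuation still labelled $l$ and now marked $\nabla^{m+1}f$, while on the combinatorial side the tree is updated to $\tau^{(i+1)}=\tau^{(i)}*_l\bigcdot$. By Corollary~\ref{c}~$i)$ the vertex $l$ of $\tau^{(i)}$ has exactly $m$ descendants, so the recursive definition of $F$ contributes the factor $\nabla^m f(F(\tau_1),\ldots,F(\tau_m))$ at $l$, where $\tau_1,\ldots,\tau_m$ are the subtrees previously grafted at $l$. Grafting a dot at $l$ leaves every other vertex untouched and only appends the leaf $i+1$, with $F(\bigcdot)=f$, as an extra argument, so the contribution at $l$ turns into $\nabla^{m+1}f(F(\tau_1),\ldots,F(\tau_m),f)$. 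This is exactly the local change induced on the composition by the branching split, in which the continuation carries the incremented derivative $\nabla^{m+1}f$ while the fresh offspring supplies the new argument $f$; hence the composition after the $(i+1)$-th splitting equals $F(\tau^{(i+1)})$.

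Throughout, the bridge between the two sides is the label-preserving correspondence of Corollary~\ref{c}~$ii)$: the alive branches are in bijection, via their integer labels, with the vertices of the drawn tree and carry identical marks, so a branch marked $\nabla^m f$ sits precisely at a vertex where $F$ applies $\nabla^m f$ to $m$ arguments. I expect the main difficulty to be the careful verification that a single split alters the global composition only locally, in lockstep with the locality of the grafting product $*_l$ appearing in the recursive definition of $F$; once this locality is pinned down, the two local replacements agree and close the induction. The finiteness of the number of splittings, already noted after Algorithm~\ref{fjkl432}, ensures that the induction terminates and yields $\prod_{\mathbf k\in\mathcal{K}^\partial_t}c_{\mathbf k}=F(\mathcal{T}^{\id})$.
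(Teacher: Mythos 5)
Your proof is correct, and it is essentially the paper's argument reorganized into a single induction. The paper proves the lemma as a two-line deduction from Corollary~\ref{c}: the leaves of $\mathcal{B}^{0,\id}|_{[0,t]}$ are in label- and mark-preserving bijection with the vertices of $\mathcal T^{\id}$, and since $F$ marks each vertex with $m$ descendants by $\nabla^m f$, the composition of leaf marks must equal $F(\mathcal T^{\id})$ --- but the step matching the \emph{composition structure} on the two sides is left implicit there. You instead run the induction on splitting times directly (the same induction that underlies Lemma~\ref{l}), carrying the stronger invariant that the running composition of alive-branch marks equals $F(\tau^{(i)})$, and you verify the inductive step by checking that one split, which replaces the mark $\nabla^m f$ at branch $l$ by $\nabla^{m+1} f$ and adjoins a fresh $f$, changes $F$ locally from $\nabla^m f(F(\tau_1),\ldots,F(\tau_m))$ to $\nabla^{m+1} f(F(\tau_1),\ldots,F(\tau_m),f)$, exactly matching the grafting update $\tau^{(i)} *_l \bigcdot$ (with the symmetry of $\nabla^{m+1}f$ in its arguments disposing of any ordering issue). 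What each buys: the paper's route is shorter and modular, reusing Corollary~\ref{c} as a black box; yours is more self-contained and pins down precisely the locality point the paper glosses over, at the cost of partially re-deriving the content of Lemma~\ref{l} inside the induction.
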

\begin{proof}
  By $ii)$ of Corollary~\ref{c}, the composition of marks over
  the leaves of $\mathcal{B}^{0,\id}|_{[0,t]}$ coincides with the
  composition of marks of all vertices of $\mathcal T^{\id}$.
  From the construction of the map $F$ 
  and $i)$ of Corollary~\ref{c},
  we conclude that the product
  $\prod_{\mathbf k \in \mathcal{K}^{\partial}_t} c_{\mathbf{k}}$   
  coincides with $F(\mathcal T^{\id})$.
\end{proof}

\paragraph{Acknowledgements.}
 This research is supported by the National Research Foundation, Singapore. %
 The work of Q. Huang is supported by the
Start-Up Research Fund of Southeast University
under Grant No. RF1028624194 and the
Jiangsu Provincial Scientific
Research Center of Applied Mathematics under Grant No. BK20233002.
 We thank an anonymous referee for useful comments. 

\footnotesize

\newcommand{\etalchar}[1]{$^{#1}$}
\def\cprime{$'$} \def\polhk#1{\setbox0=\hbox{#1}{\ooalign{\hidewidth
  \lower1.5ex\hbox{`}\hidewidth\crcr\unhbox0}}}
  \def\polhk#1{\setbox0=\hbox{#1}{\ooalign{\hidewidth
  \lower1.5ex\hbox{`}\hidewidth\crcr\unhbox0}}} \def\cprime{$'$}

\end{document}